\tikzset{
	rot90/.style={anchor=south, rotate=90, inner sep=.5mm}
}
\tikzset{
	rot45/.style={anchor=south, rotate=-45, inner sep=.5mm}
}
\newtheorem{theorem}{Theorem}[section]
\newtheorem{lemma}[theorem]{Lemma}
\newtheorem{definition}[theorem]{Definition}
\newtheorem{proposition}[theorem]{Proposition}
\newtheorem{corollary}[theorem]{Corollary}
\newenvironment{customthm}[1]
{\innercustomthm}
{\endinnercustomthm}
\theoremstyle{definition}
\newtheorem{construction}[theorem]{Construction}
\newtheorem{question}[theorem]{Question}
\newtheorem{problem}[theorem]{Problem}
\newtheorem{remark}[theorem]{Remark}
\title{The algebraic K-theory of the K(1)-local sphere via TC}
\author{Ishan Levy\thanks{The author is supported by the NSF Graduate Research Fellowship under Grant No. 1745302.}}
\begin{document}
	\date{}
	\maketitle
	\begin{abstract}
		We describe the algebraic $K$-theory of the $K(1)$-local sphere and the category of type 2 finite spectra in terms of $K$-theory of discrete rings and topological cyclic homology. We find an infinite family of $2$-torsion classes in the $K_0$ of type 2 spectra at the prime $2$, and explain how to construct representatives of these $K_0$ classes.
	\end{abstract}
\begin{center}
	\includegraphics[scale = .35]{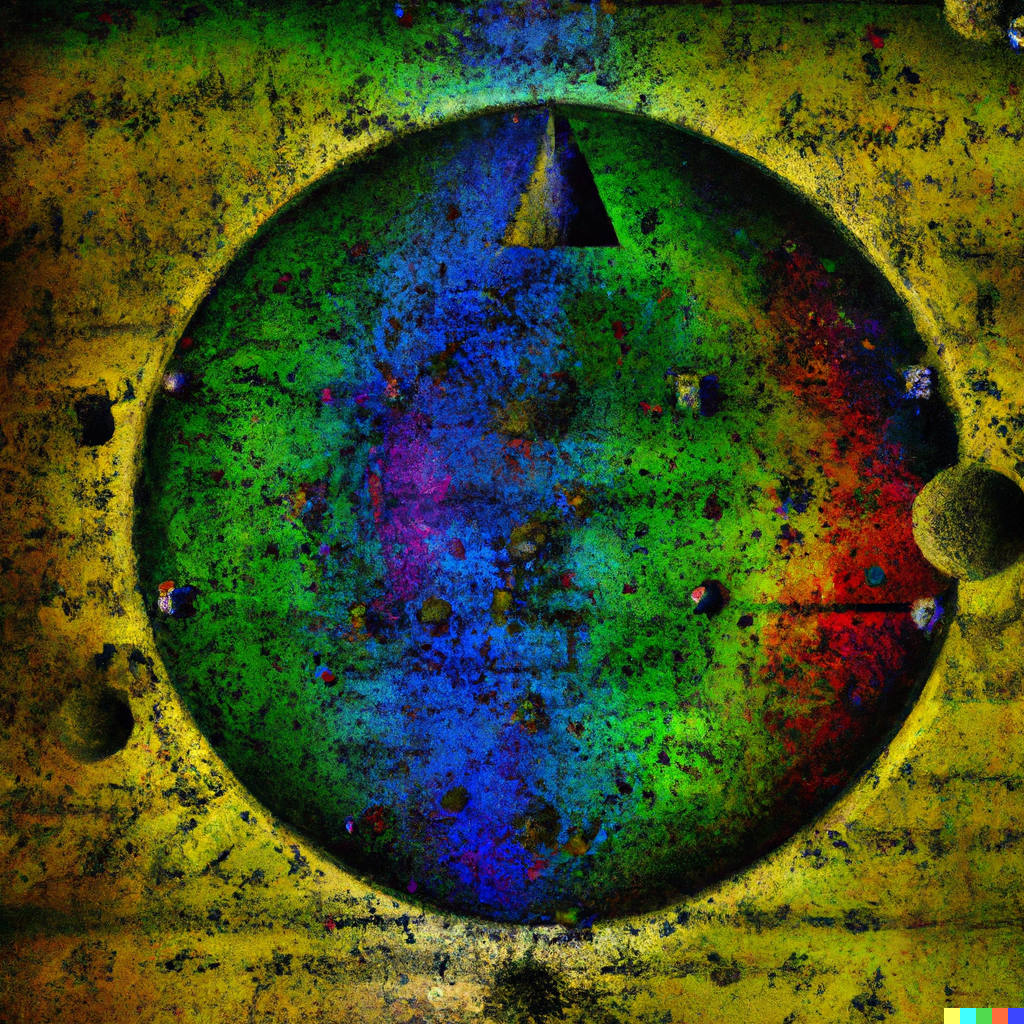}\footnote{image generated by OpenAI DALL·E 2}
\end{center}
		\tableofcontents
	\section{Introduction}
	
	Fix a prime $p$ and let $\Sp_p^{\diamondsuit}$ be the category of dualizable $p$-complete spectra. One of the fundamental results in stable homotopy theory is the thick subcategory theorem of Hopkins and Smith \cite{hopkins1998nilpotence}, which says that every nonzero thick subcategory of $\Sp_p^{\diamondsuit}$ is one of the categories $\Sp^{\omega}_{\geq n}$\footnote{when $n=0$, this denotes the category $\Sp_p^{\diamondsuit}$.} of finite spectra of type at least $n$ for some $n\geq 0$. A modern interpretation of this result is the statement that the Balmer spectrum of $\Sp_p^{\diamondsuit}$ agrees with the Zariski spectrum of $\cM_{\text{fg},p}$, the moduli stack of $p$-typical formal groups.
	
	In this paper, we study a subtle additional structure on the Balmer spectrum of $\Sp_p^{\diamondsuit}$, namely its sheaf of algebraic $K$-theory.\footnote{It is not important that we work in a $p$-completed setting, it is just convenient, as the chromatic localizations $L_n^f$ don't affect the rationalization or $\ell$-adic completions.} To the open set corresponding to the height $\leq n$ locus, this sheaf takes the value $K(L_n^f\SP_p)$, and on global sections, it  is $K(\SP_p)$. This sheaf was first considered in \cite{waldhausen1984algebraic}, where fundamental localization sequences relating $K(L_n^f\SP_p)$ to the $K$-theory of the monochromatic layers $K(\Sp_{T(n)}^{\omega})$ were observed. Thomason in \cite{thomason1997classification} showed that understanding the homotopy groups of the sheaf in low degrees would allow one to refine the thick subcategory theorem, and classify \textit{stable} subcategories\footnote{In contrast to thick subcategories, stable subcategories may not be closed under retracts} of $\Sp_{p}^{\diamondsuit}$.
	
	The only case in which $K(L_n^f\SP_p)$ is well understood is the case $n=0$, where it is $K(\QQ_p)$.\footnote{see \cite{weibel2005algebraic} for a discussion of what is known.} For $n\geq 1$, essentially the only thing previously known about $K(L_n^f\SP_p)$ was its chromatic height, because of redshift \cite{ausoni2000algebraic,clausen2020descent,land2020purity,jeremydylanredshift,allenredshift}. Further information was previously out of reach: for example, no $K$ group was previously known.
	
	In contrast, $K(\SP_p)$ is now well understood. The reason is that $\SP_p$ is a connective ring, and the Dundas--Goodwillie--McCarthy (or DGM) theorem  \cite{dundas2012local,raskin2018dundas} gives a pullback square for any connective ring $R$ of the form 
	\begin{center}
		\begin{tikzcd}
			\pullback K(R)\ar[r]\ar[d] &\TC(R) \ar[d]\\
			K(\pi_0R)\ar[r] & \TC(\pi_0R)
		\end{tikzcd}
	\end{center}
	where the horizontal maps are the cyclotomic trace. This largely reduces the computation of $K$-theory to understanding $\TC$ and the $K$-theory of discrete rings, both of which are usually more tractable invariants. Finally one must analyse the cyclotomic trace and reconstruct the $K$-theory of a connective ring from its constituent pieces in the pullback square. For the sphere, this is carried out in \cite{rognes2003smooth,blumberg2019homotopy}\footnote{to understand the homotopy groups of the $K$-theory of the sphere from the pullback square, a finite generation result of Dwyer as well as an analysis of the arithmetic fracture square are used in \cite{blumberg2019homotopy}.}. 
	
	The rings $L_n^f\SP_p$ are not connective, so DGM cannot directly be applied to compute their $K$-theory. Here we show nevertheless that $K(L_1^f\SP_p)$ can be described in terms of $\TC$ and $K$-theory of discrete rings, answering Problem 2.6 of \cite{antieauopenproblemsring}. To state our result, we need to introduce the ring $j_{\zeta}$ below.
	
	\begin{definition}
		Let $j_{\zeta}$ be the $\EE_{\infty}$-ring $\ell_p^{h\ZZ}$ for $p>2$, and $ko_2^{h\ZZ}$ if $p=2$. Here $\ell_p$ is the $p$-completed Adams summand of connective topological $K$-theory and $ko_2$ is $2$-completed connective real topological $K$-theory, and the $\ZZ$ action comes from the Adams operation $\Psi^{1+p}$.
	\end{definition} 
		The underlying spectrum of $j_{\zeta}$ can be described as the $-1$-connective cover of the $K(1)$-local sphere.
	\begin{customthm}{A}\label{thm:K1localsphere}
		$K(L_1^f \SP_p) \cong K(L_{K(1)}\SP)$, there is a cofibre sequence split on $\pi_*$
		\begin{center}
			\begin{tikzcd}
			K(j_{\zeta}) \ar[r] & K(L_{K(1)}\SP) \ar[r] & \Sigma K(\FF_p)
			\end{tikzcd}
		\end{center}
		and a pullback square
\begin{center}
	\begin{tikzcd}
		\pullback K(j_{\zeta}) \ar[r] \ar[d] & \TC(j_{\zeta}) \ar[d]\\
		K(\ZZ_p) \ar[r]  & \TC(\ZZ_p^{h\ZZ})
	\end{tikzcd}
\end{center}
	Let $F$ be the fibre of the map $\TC(j_{\zeta}) \to \TC(\ZZ_p^{h\ZZ})$. Then $F[\frac 1 p] = 0$. For $p>2$, $F$ is $(2p-2)$-connective and $\pi_{2p-2}(F/p) \cong \bigoplus_0^{\infty}\FF_p$. For $p=2$, $F$ is $1$-connective and $\pi_1F\cong \bigoplus_0^{\infty}\FF_2$.
	\end{customthm}

In particular, even for the ring $L_1^f\SP$, whose localizations at the primes other than $p$ agree with that of the sphere, its $K$-theory is not degree-wise finitely generated! This is in sharp contrast to $K(\SP)$ and $K(L_0^f\SP) = K(\SP[\frac 1 p])$, which are degree-wise finitely generated.

The proof of the cofibre sequence in \Cref{thm:K1localsphere}, which is carried out in \cref{sec:devissage}, comes from analysing the localization sequence 

\begin{center}
	\begin{tikzcd}
			\Mod(j_{\zeta})^{\omega}\otimes \Sp_{\geq 2}^{\omega}\ar[r] & \Mod(j_{\zeta})^{\omega} \ar[r] &\Mod(L^f_1j_{\zeta})^{\omega}
	\end{tikzcd}
\end{center}
and showing that on $K$-theory, it induces the desired cofibre sequence. Because $L^f_1j_{\zeta} = L_{K(1)}\SP$, the only substantial claim is that $K(\FF_p) \cong K(\Mod(j_{\zeta})^{\omega}\otimes \Sp_{\geq 2}^{\omega})$. To obtain this we choose a particularly good generator of the category, namely $j_{\zeta} \otimes Z$, where for $p>2$, $Z$ is the Smith--Toda complex $\SP/(p,v_1)$ constructed by Toda \cite{toda1971spectra}, and for $p=2$ it is $\SP/(2,\eta,v_1)$, the type $2$ spectrum constructed by Davis and Mahowald \cite{davis1981v}. We then show that the endomorphism ring of $j_{\zeta} \otimes Z$ is coconnective with $\pi_0 = \FF_p$, so that we can conclude by applying the devissage result of \cite{kcoconn} that $K(\Mod(j_{\zeta})^{\omega}\otimes \Sp_{\geq 2}^{\omega}) \simeq K(\FF_p)$.

The other main claim in \Cref{thm:K1localsphere} is the pullback square, which allows us to understand $K(j_{\zeta})$. This is almost an immediate consequence of \Cref{thm:dgmfixedpoints} below, which extends the DGM theorem to include the map $\ell_p^{h\ZZ} \to \ZZ_p^{h\ZZ}$. Recall that a \textit{truncating invariant} $E$ is a localizing invariant for which the map $E(R) \to E(\pi_0R)$ is an equivalence for any connective ring $R$. In this language, DGM says that the fibre of the cyclotomic trace is truncating.

\begin{customthm}{B}\label{thm:dgmfixedpoints}
	Let $f:R\to S$ be a map of connective $\EE_1$-rings with a $\ZZ$-action such that $f$ is $1$-connective. Then for any truncating invariant $E$, $E(R^{h\ZZ}) \to E(S^{h\ZZ})$ is an equivalence. Moreover, if $f$ is $n$-connective, then $\TC(R^{h\ZZ}) \to \TC(S^{h\ZZ})$ is too.
\end{customthm}

We also obtain the following variant:
\begin{customthm}{C}\label{thm:dgm-1conn}
	Let $R \to S$ be a $1$-connective map of $-1$-connective rings such that $\pi_{-1}R$ is a finitely generated $\pi_0R$-module. Then for any truncating invariant $E$, $E(R) \to E(S)$ is an equivalence.
\end{customthm}

%

The proof of \Cref{thm:dgmfixedpoints}, which can be found in \cref{sec:tc} is an application of the work of Land--Tamme on the $K$-theory of pullbacks. Namely, one has a pullback diagram

\begin{center}
	\begin{tikzcd}
		R^{h\ZZ} \ar[r]\ar[d] &R \ar[d]\\
	R	\ar[r] & R\times R
	\end{tikzcd}
\end{center}
Applying the main result of \cite{Land_2019}, one obtains a pullback square after applying any localizing invariant, where $R\times R$ is replaced by the ring $R\odot_{R^{h\ZZ}}^{R\times R}R$. The latter ring is connective, and comparing with the analogous construction for $S$ and using the pullback square and the fact that the invariant is truncating, one obtains the result.

In addition to \Cref{thm:K1localsphere}, we also obtain a similar formula for $K(\Sp_{T(1)}^{\omega})$ in \Cref{thm:compactk1local}, which we use in \cref{sec:eulerchar} to answer \cite[Problem 16.4]{hovey1999morava} at height 1. For $K(\Sp_{\geq2})$ we obtain the result below.

\begin{customthm}{D}\label{thm:type2}
	There is a fibre sequence $X \to K(\Sp_{\geq 2}) \to K(\FF_p)$ split on $\pi_*$, where $X$ is the total fibre of the square
	\begin{center}
		\begin{tikzcd}
			\TC(\SP_p)\ar[r]\ar[d] &\TC(\ZZ_p) \ar[d]\\
			\ar[r] \TC(j_{\zeta})& \TC(\ZZ_p^{h\ZZ})
		\end{tikzcd}
	\end{center}
\begin{itemize}
	\item For $p>2$, $X$ is $(2p-3)$-connective, so $K_0(\Sp_{\geq 2}) = \ZZ$ with generator $[\SP/(p,v_1)]$.
	\item For $p=2$, $X$ is connective with $\pi_0X = \bigoplus_0^{\infty}\ZZ/2$, and the torsion free quotient of $K_0(\Sp_{\geq 2})$ is generated by $[\SP/(2,\eta,v_1)]$.
\end{itemize}
	
\end{customthm}

In particular we find, contrary to our initial expectations that at the prime $2$ there are infinitely many $2$-torsion classes in $K_0(\Sp^{\omega}_{\geq2})$! As a corollary, we obtain a classification of dense stable subcategories of type $2$ spectra. A full stable subcategory $C' \subset C$ is \textit{dense} if the inclusion is an equivalence on idempotent completions.

\begin{corollary}(\Cref{cor:densesubcattype2})
	The dense stable subcategories of $\Sp_{\geq2}^{\omega}$ for $p>2$ are in bijection with subgroups of $\ZZ$, and the dense stable subcategories of $\Sp_{\geq2}^{\omega}$ at the prime $2$ are in bijection with subgroups of $\ZZ\oplus \bigoplus_0^{\infty}\FF_2$.
\end{corollary}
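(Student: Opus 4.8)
The plan is to obtain this as a formal consequence of the computation of $K_0(\Sp_{\geq 2}^{\omega})$ in \Cref{thm:type2}, combined with Thomason's classification of dense subcategories \cite{thomason1997classification}. Recall that for an essentially small triangulated category $\mathcal{T}$, Thomason's theorem provides an inclusion-preserving bijection between the strictly full dense triangulated subcategories $\mathcal{S}\subseteq\mathcal{T}$ and the subgroups of $K_0(\mathcal{T})$: a subcategory $\mathcal{S}$ is sent to the image of the natural map $K_0(\mathcal{S})\to K_0(\mathcal{T})$, which is injective for dense $\mathcal{S}$, and a subgroup $H$ is sent to the full subcategory on those $X$ with $[X]\in H$ (which is automatically dense, since $Y\oplus\Sigma Y$ always has class $0$). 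First I would translate this into the stable setting: replete stable subcategories of a stable $\infty$-category $C$ correspond bijectively to strictly full triangulated subcategories of $\mathrm{ho}(C)$, compatibly with the notions of density, so the dense stable subcategories of $\Sp_{\geq 2}^{\omega}$ are in bijection with the subgroups of $K_0(\Sp_{\geq 2}^{\omega})$.

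It then remains to identify $K_0(\Sp_{\geq 2}^{\omega})$. I would first observe that $\Sp_{\geq 2}^{\omega}$ is idempotent complete, being the thick subcategory of $\Sp^{\omega}$ on which $K(0)_*$ and $K(1)_*$ vanish --- both conditions are preserved by retracts, and $\Sp^{\omega}$ is idempotent complete --- so that $K_0(\Sp_{\geq 2}^{\omega})=\pi_0 K(\Sp_{\geq 2})$ is exactly the group computed in \Cref{thm:type2}. For $p>2$, the fibre $X$ is $(2p-3)$-connective with $2p-3\geq 3$, so the split fibre sequence gives $\pi_0 K(\Sp_{\geq 2})\cong K_0(\FF_p)\cong\ZZ$, generated by $[\SP/(p,v_1)]$; hence the dense stable subcategories of $\Sp_{\geq 2}^{\omega}$ correspond to subgroups of $\ZZ$. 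For $p=2$, the split fibre sequence $X\to K(\Sp_{\geq 2})\to K(\FF_2)$ gives $\pi_0 K(\Sp_{\geq 2})\cong\pi_0 X\oplus K_0(\FF_2)\cong\bigl(\bigoplus_0^{\infty}\ZZ/2\bigr)\oplus\ZZ\cong\ZZ\oplus\bigoplus_0^{\infty}\FF_2$, so the dense stable subcategories correspond to subgroups of $\ZZ\oplus\bigoplus_0^{\infty}\FF_2$.

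Since the corollary is formal once \Cref{thm:type2} is in hand, there is no real obstacle here; the only points deserving care are the bookkeeping translating Thomason's triangulated statement into the language of replete stable subcategories and density used in this paper, and --- if a more concrete description is wanted --- unwinding the bijection, e.g.\ recording that for $p>2$ the subgroup $n\ZZ\subseteq\ZZ$ corresponds to the full subcategory of type $\geq 2$ finite spectra $X$ such that the integer $c(X)$ defined by $[X]=c(X)\cdot[\SP/(p,v_1)]$ is divisible by $n$.
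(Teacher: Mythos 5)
Your argument is correct and matches the paper's: the corollary is read off from \Cref{prop:stabletensorideals} (Thomason's classification of dense stable subcategories by subgroups of $K_0$) once $K_0(\Sp_{\geq 2}^{\omega})$ is extracted from \Cref{thm:type2}. The only thing you do beyond what the paper makes explicit is to verify that $\Sp_{\geq 2}^{\omega}$ is idempotent complete, which is the right sanity check to ensure that $\pi_0$ of the (nonconnective) $K$-theory spectrum used in \Cref{thm:type2} agrees with the Grothendieck group appearing in Thomason's theorem.
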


In \cref{sec:const}, we explain how to construct explicit spectra representing all of the $2$-torsion classes, but we briefly explain how to construct the first one here. We first choose a self map $v_1^4:\Sigma^8 \SP/2 \to \SP/2$. Because $\eta\sigma$ is $2$-torsion in $\pi_8\SP_{2}$, we can produce an extension of it to a map $\Sigma^8 \SP/2 \to \SP$. Let $\overline{\eta\sigma}$ be the composite 
$$\overline{\eta\sigma}:\Sigma^8 \SP/2 \rightarrow \SP \rightarrow \SP/2$$
Then $[\SP/(2,v_1^4+\overline{\eta\sigma})]-[\SP/(2,v_1^4)]$ represents the first $2$-torsion class in $K_0(\Sp_{\geq2})$.

%

%
%


We ask open questions throughout the paper related to this work. A particularly important one is the following:
\begin{question}
	What can be said about $\TC(j_{\zeta})$? For example, can its homotopy groups be computed, at least mod $(p,v_1)$ or $(p,v_1,v_2)$? 
\end{question}

As a first step to the above question, in forthcoming work joint with David Lee, we compute $\THH(j_{\zeta})$ mod $(p,v_1)$ at odd primes.

\begin{theorem}[Lee--Levy \cite{THHj}]
	For $p>2$, there is an isomorphism of graded rings 
	
	$$\pi_*\THH(j_{\zeta})/(p,v_1) \cong \pi_*\THH(\ell)/(p,v_1)\otimes \pi_*\mathrm{HH}(C^*(S^1;\FF_p)/\FF_p)$$
\end{theorem}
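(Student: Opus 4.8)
The plan is to pass to $\FF_p$-coefficients (modulo $(p,v_1)$), where the $\ZZ$-action defining $j_{\zeta}=\ell_p^{h\ZZ}$ trivializes, and then to read off the two tensor factors from the multiplicative and base-change properties of $\THH$.

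The crucial first step is to identify $j_{\zeta}/(p,v_1)$. Since $\SP/(p,v_1)$ is a finite spectrum, smashing with it commutes with the limit computing homotopy fixed points, so $j_{\zeta}/(p,v_1)\simeq(\ell_p/(p,v_1))^{h\ZZ}$ with the $\ZZ$-action induced by $\Psi^{1+p}$. Now $\ell_p/(p,v_1)$ is connective with homotopy concentrated in degree $0$, where it is $\FF_p$ — indeed $\pi_*(\ell_p/p)=\FF_p[v_1]$ with the Adams self-map acting as multiplication by $v_1$ — so $\ell_p/(p,v_1)\simeq\FF_p$. Because $\Psi^{1+p}$ scales $v_1^k$ by $(1+p)^{(p-1)k}\equiv 1\pmod p$, it acts trivially on $\pi_*(\ell_p/p)$ and hence as the identity on $\pi_0(\ell_p/(p,v_1))=\FF_p$; invoking the rigidity of $\FF_p$ (its $\EE_1$-ring self-equivalences form a connected space), the $\ZZ$-action on $\FF_p$ is trivial, so $j_{\zeta}/(p,v_1)\simeq\FF_p^{\,h\ZZ,\,\mathrm{triv}}\simeq C^*(S^1;\FF_p)$.

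For the computation itself, the key manipulation is that for any $\EE_\infty$-ring $R$ one has
\[
\THH(R)\otimes_\SP\SP/(p,v_1)\ \simeq\ \THH\bigl(R\otimes_\SP\SP/(p,v_1)\bigr)\otimes_{\THH(\SP/(p,v_1))}\SP/(p,v_1),
\]
which follows from $\THH(R\otimes_\SP\SP/(p,v_1))\simeq\THH(R)\otimes_\SP\THH(\SP/(p,v_1))$. Applying this to $R=j_{\zeta}$ and substituting Step 1 gives $\THH(j_{\zeta})/(p,v_1)\simeq\THH(C^*(S^1;\FF_p))\otimes_{\THH(\SP/(p,v_1))}\SP/(p,v_1)$. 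The base-change formula for $\THH$ along $\FF_p\to C^*(S^1;\FF_p)$ identifies $\THH(C^*(S^1;\FF_p))$ with $\mathrm{HH}(C^*(S^1;\FF_p)/\FF_p)\otimes_{\FF_p}\THH(\FF_p)$, and — after checking that the $\THH(\SP/(p,v_1))$-action factors through $\THH(\SP/(p,v_1))\to\THH(\FF_p)$, which holds because the two ring maps $\SP/(p,v_1)\to C^*(S^1;\FF_p)$ at play agree on homotopy groups — one arrives at
\[
\THH(j_{\zeta})/(p,v_1)\ \simeq\ \mathrm{HH}(C^*(S^1;\FF_p)/\FF_p)\otimes_{\FF_p}\bigl(\THH(\FF_p)\otimes_{\THH(\SP/(p,v_1))}\SP/(p,v_1)\bigr).
\]
Running the displayed identity in reverse for $R=\ell$, using $\ell/(p,v_1)\simeq\FF_p$, recognizes the parenthesized factor as $\THH(\ell)/(p,v_1)$. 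Since everything here is an $\FF_p$-module, the (multiplicative) Künneth isomorphism over $\FF_p$ then gives the asserted isomorphism of graded rings; one may finally substitute the McClure--Staffeldt description of $\pi_*\THH(\ell)/(p,v_1)$ and the short Koszul computation $\pi_*\mathrm{HH}(C^*(S^1;\FF_p)/\FF_p)\cong\Lambda_{\FF_p}(x)\otimes\Gamma_{\FF_p}(e)$, with $x$ in degree $-1$ and $e$ in degree $0$, to make the right-hand side explicit.

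The main obstacle is Step 1: one must show the $\Psi^{1+p}$-action on $\ell_p/(p,v_1)$ trivializes as a genuine $\ZZ$-action, not merely on homotopy groups — a coherence statement whose essential input is the rigidity of $\FF_p$, the underlying reason being that $\Psi^{1+p}$ differs from the identity on $\ell_p$ by an operator divisible by $v_1$. A secondary but real technical point is the homotopy-commutativity bookkeeping around $\SP/(p,v_1)$, which at $p=3$ is not a ring spectrum at all: there the statement must be read as an isomorphism of graded modules over $\pi_*\THH(\SP_{(p)})/(p,v_1)$, and a separate argument (or a more careful model for the manipulations above) is needed; in all cases one must verify the compatibility of the various $\THH(\SP/(p,v_1))$- and $\FF_p$-module structures used in the cancellations.
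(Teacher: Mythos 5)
The paper does not actually prove this statement: it is quoted from the forthcoming joint work of Lee and Levy [THHj], so there is no in-source proof to compare your proposal against. I can only evaluate it on its own terms.

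Your Step 1 is correct and is genuinely the right starting observation. Since $\SP/(p,v_1)$ is finite, smashing commutes with homotopy fixed points; $\ell_p/(p,v_1)\simeq\FF_p$; the Adams operation $\Psi^{1+p}$ acts as the identity mod $p$; and the rigidity of $\FF_p$ (the space of $\EE_\infty$-ring self-maps of $\FF_p$ is contractible) upgrades the triviality on homotopy to a genuine trivialization of the $\ZZ$-action, giving $j_\zeta/(p,v_1)\simeq\FF_p^{h\ZZ}\simeq C^*(S^1;\FF_p)$. Your final Koszul computation of $\pi_*\mathrm{HH}(C^*(S^1;\FF_p)/\FF_p)$ is also right.

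The gap is in the central displayed identity
\[
\THH(R)\otimes_\SP\SP/(p,v_1)\ \simeq\ \THH(R\otimes_\SP\SP/(p,v_1))\otimes_{\THH(\SP/(p,v_1))}\SP/(p,v_1).
\]
The Künneth equivalence $\THH(R\otimes A)\simeq\THH(R)\otimes\THH(A)$ of underlying spectra is fine for $\EE_1$-rings (cofinality of the diagonal in $\Delta^{op}\times\Delta^{op}$), but the relative tensor product $\otimes_{\THH(\SP/(p,v_1))}$ is not defined: $\THH$ of an $\EE_1$-ring is not itself a ring, and the augmentation $\THH(A)\to A$ you implicitly use is the map induced by $S^1\to *$, which requires $A$ to be at least $\EE_2$ (and in the $\EE_2$ case only lands in the center). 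The Smith--Toda complex $\SP/(p,v_1)$ is not even homotopy associative at $p=3$, and at larger odd primes it is not known to carry the $\EE_2$-structure that would be needed. You frame this as "secondary bookkeeping" confined to $p=3$, but it is a structural obstruction at every odd prime: as written, the right-hand side of the identity simply does not parse. The standard repair is to work throughout with $\THH$ with coefficients — $\THH(R)/(p,v_1)\simeq\THH(R;\,R/(p,v_1))$, which needs no ring structure on the quotient — and then exploit the $j_\zeta$-algebra (indeed $\FF_p$-algebra) structure on $j_\zeta/(p,v_1)\simeq C^*(S^1;\FF_p)$ together with base change along the $\EE_\infty$-map $j_\zeta\to\ell$ to extract the tensor decomposition. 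One would then verify multiplicativity separately, e.g., via the $\EE_1$-structure on $\SP/p$ at $p\geq5$ or a filtration argument, and handle $p=3$ by a module-level statement. Your Step 1 and the target answer are sound; the middle of the argument needs to be rebuilt around coefficients and base change rather than the quotient-ring manipulation.
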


We don't know the extent to which the methods of this paper are capable of understanding higher height phenomena. Despite this, in forthcoming work joint with Robert Burklund, we completely compute the $K$-theory sheaf after inverting the prime $p$.

\begin{theorem}[Burklund--Levy \cite{pinverted}]\label{thm:pinverted}
	For $n\geq 1$, there are isomorphisms $$K(L_n\SP_p)[\frac 1 p]\cong K(L_n^f\SP_p)[\frac 1 p]\cong K(\ZZ_p)[\frac 1 p] \oplus \Sigma K(\FF_p)[\frac 1 p]$$ Moreover $K(\Sp_{\geq n})[\frac 1 p] \cong K(\FF_p)[\frac 1 p]$, and a generator of $K_0(\Sp_{\geq n})[\frac 1 p]$ is given by the class of a generalized Moore spectrum $[\SP/(p,v_1^{p^{i_1}},\dots, v_{n-1}^{p^{i_{n-1}}})]$.
\end{theorem}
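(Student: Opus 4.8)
The plan is to establish the $L_n^f$ case by induction on $n$, running the same package of tools that gives \Cref{thm:K1localsphere} and \Cref{thm:type2} --- Waldhausen-type localization sequences, the coconnective devissage of \cite{kcoconn}, and the $\TC$-comparison of \Cref{thm:dgmfixedpoints} --- and then to deduce the Bousfield-local and the $\Sp_{\geq n}$ statements from it. The base case $n=1$ is essentially \Cref{thm:K1localsphere}: the telescope conjecture holds at height $1$ (Mahowald--Miller), so $L_1^f\SP_p=L_1\SP_p=L_{K(1)}\SP$; since $F[\tfrac1p]=0$, the map $\TC(j_\zeta)[\tfrac1p]\to\TC(\ZZ_p^{h\ZZ})[\tfrac1p]$ is an equivalence, the pullback square collapses to $K(j_\zeta)[\tfrac1p]\simeq K(\ZZ_p)[\tfrac1p]$, and the split cofibre sequence gives $K(L_1^f\SP_p)[\tfrac1p]\simeq K(\ZZ_p)[\tfrac1p]\oplus\Sigma K(\FF_p)[\tfrac1p]$.

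For the inductive step I would combine the localization sequence $K(\Sp_{\geq n+1})\to K(\SP_p)\to K(L_n^f\SP_p)$ (coming from $\SP_p^\omega/\Sp^\omega_{\geq n+1}\simeq\Mod(L_n^f\SP_p)^\omega$ up to idempotent completion) with a devissage description of the monochromatic layer generalizing \cref{sec:devissage}: choose a type-$(n+1)$ generalized Moore spectrum $Z=\SP/(p,v_1^{p^{i_1}},\dots,v_{n}^{p^{i_{n}}})$, which exists once the exponents $p^{i_j}$ are taken large enough for the iterated $v_j$-self-maps to exist by Hopkins--Smith periodicity, and a suitable connective cover $j_{\zeta,n}$ of $L_{K(n)}\SP$ (with $j_{\zeta,1}=j_\zeta$); then show that the endomorphism ring of $j_{\zeta,n}\otimes Z$ in the kernel of $\Mod(j_{\zeta,n})^\omega\to\Mod(L_n^f j_{\zeta,n})^\omega$ is coconnective with $\pi_0=\FF_p$, and conclude via \cite{kcoconn} that this kernel has $K$-theory $K(\FF_p)$. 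Feeding in the Dundas--Goodwillie--McCarthy square for $\SP_p$ and, inductively, the pullback square for $j_{\zeta,n}$ presents $K(L_n^f\SP_p)$ as the total fibre of a finite cube of $\TC$'s of the chromatic tower $\SP_p\to j_{\zeta,1}\to\cdots\to j_{\zeta,n}$ and of their linear approximations $\ZZ_p\to\ZZ_p^{h\ZZ}\to\cdots$, plus a $\Sigma K(\FF_p)$-summand. After inverting $p$ the computation collapses provided each comparison map $\TC(A)\to\TC(A^{\mathrm{lin}})$ in the cube becomes an equivalence. For $A=\SP_p$, $A^{\mathrm{lin}}=\ZZ_p$ this holds because localizing at any prime $\ell\neq p$ and rationally sends both rings to $H\QQ_p$ (their positive homotopy is $p$-torsion) and $\TC$ is finitary --- equivalently $K(\SP_p)[\tfrac1p]\xrightarrow{\sim}K(\ZZ_p)[\tfrac1p]$, by DGM. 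For $A=j_{\zeta,m}$ the height-$1$ case is exactly $F[\tfrac1p]=0$, which reduces to the identification $j_\zeta[\tfrac1p]\simeq\ZZ_p^{h\ZZ}[\tfrac1p]$: after inverting $p$, $1-\Psi^{1+p}$ acts on $\pi_{2k(p-1)}$ of the rationalized Adams summand by the unit $1-(1+p)^{k(p-1)}$, hence invertibly for $k\neq0$, so the homotopy fixed points collapse onto $C^*(S^1;\QQ_p)$.

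For the Bousfield-local statement, $L_n$ is smashing (Hopkins--Ravenel), so there is a localization sequence $K(\ker)\to K(L_n^f\SP_p)\to K(L_n\SP_p)$ whose kernel is the category of $E(n)$-acyclic compact $L_n^f\SP_p$-modules --- the home of the failure of the telescope conjecture for $n\geq2$ --- and one must show $K(\ker)[\tfrac1p]=0$; I would reduce this again to the $\TC$-vanishing above, or alternatively run the $L_n$-side induction in parallel using the $K(m)$-local categories, where the chromatic Nullstellensatz and the identity $L_n^f j_{\zeta,n}=L_{K(n)}\SP$ supply the devissage. For $\Sp_{\geq n}$, the sequence $K(\Sp_{\geq n})\to K(\SP_p)\to K(L^f_{n-1}\SP_p)$ together with $K(\SP_p)[\tfrac1p]\xrightarrow{\sim}K(\ZZ_p)[\tfrac1p]$ and the inductive value of $K(L^f_{n-1}\SP_p)[\tfrac1p]$ forces $K(\Sp_{\geq n})[\tfrac1p]\simeq K(\FF_p)[\tfrac1p]$; to identify the generator, observe that in $K(\Sp_{\geq n})\to K(\Sp_{\geq n-1})\to K(\Sp^\omega_{T(n-1)})$ the class $[\SP/(p,v_1^{p^{i_1}},\dots,v_{n-1}^{p^{i_{n-1}}})]$ is the boundary of the class of the periodicity operator $v_{n-1}^{p^{i_{n-1}}}$ in $K_1(\Sp^\omega_{T(n-1)})$, and that this operator generates the relevant $\ZZ[\tfrac1p]$-summand of $K_1(\Sp^\omega_{T(n-1)})[\tfrac1p]$ by the devissage of the monochromatic layer --- exactly the mechanism producing the $n=2$ classes in \cref{sec:const} and \Cref{thm:type2}.

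The hard part will be the higher-height instance of the key vanishing: showing that $\TC$ of the higher $j$-spectra $j_{\zeta,n}$, $n\geq2$, cannot be distinguished from their linear approximations after inverting $p$. Unlike $\SP_p$ and $\ZZ_p$, these rings retain non-torsion higher homotopy (the $v_i$-classes survive inverting $p$), so one cannot collapse to $H\QQ_p$; instead one needs enough control of $\THH$ and the cyclotomic structure of $j_{\zeta,n}$ --- in the spirit of the $\THH(j_\zeta)$ computation of \cite{THHj} --- to run a rationalized, prime-to-$p$ version of the \Cref{thm:dgmfixedpoints} argument, together with an input about the $E(n)$-acyclic compact $L_n^f$-modules (from the circle of ideas around the disproof of the telescope conjecture) guaranteeing that this category contributes nothing to $K$-theory after inverting $p$.
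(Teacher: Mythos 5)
This theorem is cited from the forthcoming Burklund--Levy paper \cite{pinverted}; the present paper contains no proof of it, only the statement. Nevertheless, the paper gives several explicit hints about how that proof goes, and they point away from your approach. In Section \ref{sec:eulerchar} it is stated that ``$\chi_{\BP\langle n\rangle}$ is shown in \cite{pinverted} to give an equivalence $K(\Sp_{\geq n+1})[\frac 1 p] \to K(\FF_p)[\frac 1 p]$,'' and \Cref{prop:pinvhoveystrick} attributes to \cite{pinverted} the statement that $\chi$ is an isomorphism after inverting $p$. This signals that the actual argument runs through the Euler-characteristic pairing $K(\fp_n)\otimes K(\Sp_{\geq n+1})\to K(\FF_p)$ and devissage, not through a tower of higher $j$-spectra and a Land--Tamme/$\TC$ comparison. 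Indeed, the paper is explicit that ``we don't know the extent to which the methods of this paper are capable of understanding higher height phenomena'' --- a near-direct admission that the $j_\zeta$/DGM/Land--Tamme machinery does not extend to $n\geq 2$, which is exactly what your inductive step requires.

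More concretely, your proposal has a genuine gap at the inductive step that you yourself flag as ``the hard part.'' Three distinct things are left unproved and are not routine: (i) the existence and relevant properties of the higher $j$-spectra $j_{\zeta,n}$ --- for $n\geq 2$ the $K(n)$-local sphere is not obtainable as $R^{h\ZZ}$ for a nice connective $\EE_\infty$-ring $R$, and there is no analogue of the Land--Tamme pullback $R\to R\times R\leftarrow R$ making $j_{\zeta,n}$ amenable to \Cref{thm:dgmpullback}; (ii) the key vanishing $\TC(j_{\zeta,n})[\frac1p]\xrightarrow{\sim}\TC(j_{\zeta,n}^{\mathrm{lin}})[\frac1p]$, which you cannot reduce to $H\QQ_p$ because, as you note, the $v_i$-classes of these rings are not $p$-torsion and survive inverting $p$; and (iii) the identification $K(L_n\SP_p)[\frac1p]\simeq K(L_n^f\SP_p)[\frac1p]$ for $n\geq 2$, where the telescope conjecture fails, so one must show the $E(n)$-acyclic compact $L_n^f$-modules have vanishing $p$-inverted $K$-theory; gesturing at the chromatic Nullstellensatz does not supply this. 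Your $n=1$ base case is fine (it is \Cref{thm:K1localsphere} with $F[\frac1p]=0$), and the reduction $K(\SP_p)[\frac1p]\simeq K(\ZZ_p)[\frac1p]$ via DGM and \Cref{prop:tclocalization} is correct, but the induction does not close, and the evidence in the paper is that the cited proof proceeds by an entirely different mechanism (Euler characteristics/devissage on the monochromatic layers) that sidesteps $\TC$ altogether.
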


%
%



\subsection*{Conventions}
We assume the reader is familiar with higher algebra and algebraic $K$-theory. Some conventions we use are:

\begin{itemize}
\item The term category will refer to an $\infty$-category as developed by Joyal and Lurie.
\item $\Map_C(a,b)$ denotes the space of maps from $a$ to $b$ in a category $C$. $C$ is omitted from the notation when it is clear from context.
\item Similarly, in a stable category $C$, $\map_C(a,b)$ denotes the mapping spectrum.
\item For an $\EE_1$-algebra $R$, $\Mod(R)$ refers to its category of left modules.
\item For an $\EE_1$-algebra $R$ and a stable category $C$, $R\otimes C$ is shorthand for either $\Mod(R)\otimes C$ if $C$ is presentable or $\Mod(R)^{\omega}\otimes C$ if $C$ is small.\footnote{there is no ambiguity because if $C$ is stable, presentable, and small, it must be zero, in which case $\Mod(R)\otimes C$ and $\Mod(R)^{\omega}\otimes C$ are both the zero category.}
\item We use $K(-)$ for \textit{nonconnective} $K$-theory. For a compactly generated stable category $C$, $K(C)$ will mean $K(C^{\omega})$.
\item $\Uloc$ and $\Uadd$ denote the versions of the universal localizing and additive invariants of \cite{BGT} that do not preserve any kind of filtered colimits.
\item We use $x_n$ for a polynomial generator in degree $n$ and $\epsilon_n$ for an exterior generator in degree $n$. As an example, $\SP[x_n]$ is the free $\EE_1$-algebra on a class in degree $n$.
\end{itemize}

\subsection*{Acknowledgements}
I would like to thank Andrew Blumberg, Robert Burklund, Sanath Devalapurkar, David Gepner, Jeremy Hahn, Markus Land, David Jongwon Lee, Akhil Mathew, Lennart Meier, and Georg Tamme for helpful conversations related to this work. I am especially grateful to Mike Hopkins for getting me started with this project, and for his encouragement and advice. I would like to thank Andrew Blumberg, Jeremy Hahn, and especially Robert Burklund for their helpful comments on earlier drafts.
\section{Localization sequences and devissage}\label{sec:devissage}

The main goal of this section is to prove the parts of \Cref{thm:K1localsphere} that come from localization sequences and devissage, namely \Cref{prop:L1LK1}, \Cref{prop:K1localsequence}, and \Cref{lem:nullona1invkthy} below. These results allow us to reduce the study of objects such as $K(L_1^f\SP)$, $K(\Sp_{\geq2}^{\omega})$, and $K(\Sp_{K(1)}^{\omega})$ to the study of $K(j_{\zeta})$. The key tool here is devissage in the form given in \cite{kcoconn}. 

\begin{theorem}[\cite{kcoconn}]\label{thm:devissage}
	If $R$ is a coconnective ring with $\pi_0$ regular, and $\pi_{-i}$ has tor dimension $< i$ over $\pi_0$, then the connective cover map $\pi_0R \to R$ is an equivalence on $K$-theory.
\end{theorem}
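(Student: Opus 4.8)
The plan is to compute $K(R)$ and $K(\pi_0R)$ as the $K$-theory of explicit stable subcategories of $\mathrm{Mod}(R)$ and to compare them, proceeding by dévissage along the Whitehead tower of $R$. First I would reduce, by a continuity argument, to the case that $R$ has only finitely many nonzero homotopy groups; then $R$ is bounded, and so is every perfect $R$-module (a finite colimit of shifts of the coconnective ring $R$). Let $\mathrm{Coh}(R)\subseteq\mathrm{Mod}(R)$ be the full subcategory of bounded modules with degreewise finitely generated homotopy over $\pi_0R$ (the mild Noetherian hypotheses needed here are implicit in, or easily added to, the statement). Then $\mathrm{Perf}(R)$ is a full subcategory of $\mathrm{Coh}(R)$, the standard $t$-structure on $\mathrm{Mod}(R)$ restricts to a \emph{bounded} $t$-structure on $\mathrm{Coh}(R)$ with heart the category of finitely generated discrete $\pi_0R$-modules, and hence Barwick's theorem of the heart, Quillen's resolution theorem and the regularity of $\pi_0R$ together give $K(\mathrm{Coh}(R))\simeq K(\pi_0R)$. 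So the theorem reduces to the claim that the inclusion $\mathrm{Perf}(R)\hookrightarrow\mathrm{Coh}(R)$ is an equivalence on $K$-theory, i.e.\ that $K(\mathrm{Coh}(R)/\mathrm{Perf}(R))=0$. Note that this inclusion is genuinely not an equivalence of categories: already $\FF_p$ is coherent but not perfect over $C^{*}(S^{2};\FF_p)=\FF_p\oplus\Sigma^{-2}\FF_p$, so a real $K$-theoretic argument is required and not just a comparison of categories.

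I would prove $K(\mathrm{Coh}(R)/\mathrm{Perf}(R))=0$ by induction on the length of the Whitehead tower $\cdots\to\tau_{\geq-n}R\to\tau_{\geq-(n-1)}R\to\cdots\to\pi_0R$ of $R$. The base case $R=\pi_0R$ is immediate since $\pi_0R$ is regular. For the inductive step one must compare $\mathrm{Coh}$ and $\mathrm{Perf}$ of $\tau_{\geq-n}R$ with those of $\tau_{\geq-(n-1)}R$ across the square-zero extension $\tau_{\geq-n}R\to\tau_{\geq-(n-1)}R$, whose fibre is the bimodule $\Sigma^{-n}\pi_{-n}R$ with, by hypothesis, $\pi_{-n}R$ of $\mathrm{Tor}$-dimension strictly less than $n$ over $\pi_0R$.

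This square-zero comparison is the crux, and the step I expect to be the main obstacle. The natural tool is Land--Tamme: writing $\tau_{\geq-n}R$ as the Milnor square $\tau_{\geq-(n-1)}R\times_{\tau_{\geq-(n-1)}R\oplus\Sigma^{1-n}\pi_{-n}R}\tau_{\geq-(n-1)}R$ (with the two maps the trivial derivation and the classifying one), applying a localizing invariant, and using \cite{Land_2019} to replace the off-diagonal term by the tensor ring $\odot$, the claim becomes that a specific comparison map is an equivalence after applying the invariant and carrying out the dévissage. The hypothesis ``$\mathrm{Tor}$-dimension $<n$'' should enter exactly here: resolving $\pi_{-n}R$ by finitely generated projective $\pi_0R$-modules in fewer than $n$ stages exhibits $\Sigma^{-n}\pi_{-n}R$ as built from cells in degrees $[-n,-1]$ --- strictly coconnective --- which is precisely what should force the relevant iterated tensor powers and the error term in the Land--Tamme square to be connected enough to be invisible once the dévissage on the coherent side is taken into account. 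Making this connectivity bookkeeping work, together with the parallel analysis of $\mathrm{Coh}(R)$, is where the real content lies; the surrounding steps are formal.
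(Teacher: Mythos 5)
This theorem is cited from \cite{kcoconn} and stated here without proof, so there is no in-paper argument to compare against; evaluating your proposal on its own terms, the step you pass over most quickly is the one that fails. You assert that the standard $t$-structure on $\mathrm{Mod}(R)$ restricts to a bounded $t$-structure on $\mathrm{Coh}(R)$ with heart the finitely generated discrete $\pi_0R$-modules. For coconnective $R$ this is false under either natural reading of ``standard.'' If you mean the $t$-structure recalled in Section~3 of this paper (connective $=$ generated under colimits and extensions by $R$; coconnective $=$ underlying spectrum coconnective), then $R$ itself is both connective and coconnective and hence lies in the heart, which is therefore strictly larger than the category of discrete $\pi_0R$-modules; and a non-perfect coherent module is not bounded below in that $t$-structure. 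If you instead mean Postnikov truncations of the underlying spectrum, these are simply not $R$-modules when $R$ is coconnective: in your own example $R=\FF_p\oplus\Sigma^{-2}\FF_p$, the putative truncation $\tau_{\geq0}R=\FF_p$ cannot be made an $R$-submodule of $R$, since an $R$-linear section $\FF_p\to R$ of the projection would split the free module $R$ as $\FF_p\oplus\Sigma^{-2}\FF_p$ in $\mathrm{Mod}(R)$, which is impossible because $\pi_0\mathrm{End}_R(R)=\pi_0R=\FF_p$ has no nontrivial idempotents.

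So the identification $K(\mathrm{Coh}(R))\simeq K(\pi_0R)$ via a theorem of the heart is unavailable as written, and this is not a peripheral point: the absence of a $t$-structure of this kind --- equivalently, the failure of $\pi_0R$ and of truncations to carry a compatible $R$-action and to be perfect over a coconnective $R$ --- is exactly the phenomenon that makes the theorem nontrivial, and it is why your (correct and important) observation that $\FF_p$ is coherent but not perfect over $\FF_p\oplus\Sigma^{-2}\FF_p$ has no easy resolution by dévissage on a heart. The remaining ``crux'' step is, as you yourself flag, left unargued, and I do not see how the Land--Tamme induction closes as set up: the Land--Tamme theorem yields a pullback square of $K(\mathrm{Perf}(-))$ involving the $\odot$-ring, but provides no direct handle on $\mathrm{Coh}(-)$ or on the Verdier quotient $\mathrm{Coh}(R)/\mathrm{Perf}(R)$ you are trying to show is $K$-acyclic.
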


To begin, recall that there is a localization sequence

\begin{equation}\label{eqn:locseq}
	\Sp_{\geq n+1} \to \Sp \to L_n^f\Sp
\end{equation}

Our propositions are obtained by tensoring this with the rings in question.

\begin{proposition}\label{prop:L1LK1}
	The natural map $K(L_1^f\SP_p) \to K(L_{K(1)}\SP)$ is an equivalence.
\end{proposition}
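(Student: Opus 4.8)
The rings $L^f_1\SP_p$ and $L_{K(1)}\SP$ are genuinely different --- for instance $\pi_{-1}L^f_1\SP_p = 0$ while $\pi_{-1}L_{K(1)}\SP = \ZZ_p$ --- so base change along the localization map $L^f_1\SP_p \to L_{K(1)}\SP$ is not an equivalence of perfect module categories, and one cannot compare their $K$-theories on the nose. The plan is to split the comparison into a $p$-torsion part and a rational part. For any $\EE_1$-ring $R$ the localization $R \to R[1/p]$ induces a localization sequence
\[
\langle R/p\rangle \;\longrightarrow\; \Mod(R)^{\omega} \;\longrightarrow\; \Mod(R[1/p])^{\omega},
\]
where $\langle R/p\rangle \subseteq \Mod(R)^{\omega}$ denotes the thick subcategory generated by $R/p$, equivalently the perfect $R$-modules on which $p$ acts nilpotently; this is natural in $R$. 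Applying it to $R = L^f_1\SP_p$ and $R = L_{K(1)}\SP$ and comparing along the localization map yields a map of cofibre sequences after applying $K(-)$, and it suffices to prove that the maps on the two outer terms are equivalences.

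For the $p$-torsion terms: $\langle L^f_1\SP_p/p\rangle$ and $\langle L_{K(1)}\SP/p\rangle$ are generated by $L^f_1\SP_p/p \simeq L^f_1(\SP/p)$ and $L_{K(1)}\SP/p \simeq L_{K(1)}(\SP/p)$ respectively. Since $\SP/p$ is a type $1$ spectrum it is rationally trivial, so $L^f_1(\SP/p) \simeq L_{K(1)}(\SP/p)$ by the height $1$ telescope conjecture (Mahowald, Miller); hence base change sends the chosen generator to the chosen generator via an equivalence. As the collection of $N \in \langle L^f_1\SP_p/p\rangle$ for which $N \to N \otimes_{L^f_1\SP_p} L_{K(1)}\SP$ is an equivalence forms a thick subcategory containing this generator, base change is fully faithful on $\langle L^f_1\SP_p/p\rangle$, and being moreover essentially surjective it is an equivalence of categories, hence an equivalence on $K$-theory.

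For the rational terms: $L^f_1\SP_p[1/p] \simeq H\QQ_p$, since the homotopy of $L^f_1\SP_p$ is rationally concentrated in degree $0$. On the other side, $L_{K(1)}\SP[1/p] = L_{K(1)}\SP \otimes \QQ$, and computing its homotopy from the fibre sequence $L_{K(1)}\SP \to L^f_1\ell_p \xrightarrow{\Psi^{1+p}-1} L^f_1\ell_p$ (or, for $p = 2$, its analogue with $L^f_1 ko_2$ and $\Psi^3$) gives $\QQ_p$ in degrees $0$ and $-1$ and zero otherwise. Thus $L_{K(1)}\SP[1/p]$ is a coconnective ring with $\pi_0 = \QQ_p$ a field and $\pi_{-1} = \QQ_p$ of tor dimension $0$ over $\pi_0$, and the map $L^f_1\SP_p[1/p] \to L_{K(1)}\SP[1/p]$ induced by the localization map, namely $H\QQ_p \to L_{K(1)}\SP[1/p]$, is an isomorphism on $\pi_0$ and hence is the connective cover map; by \Cref{thm:devissage} it is an equivalence on $K$-theory. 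Since the maps on the two outer terms are equivalences, so is the map on middle terms, i.e.\ $K(L^f_1\SP_p) \to K(L_{K(1)}\SP)$.

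The main obstacle is conceptual rather than computational: one must avoid comparing the module categories directly --- which genuinely fails --- and instead isolate the comparison at its rational and $p$-torsion parts via the localization sequence above. Once that is set up, the $p$-torsion part is controlled by the height $1$ telescope conjecture and a generation argument, and the rational part by recognizing $L_{K(1)}\SP[1/p]$ as a coconnective ring to which the dévissage theorem \Cref{thm:devissage} applies.
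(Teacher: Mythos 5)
Your proof is correct and follows essentially the same route as the paper: split via the localization sequence for inverting $p$, invoke the height-$1$ telescope conjecture on the $p$-nilpotent piece, and apply \Cref{thm:devissage} on the rational piece after identifying $L_{K(1)}\SP[1/p]\simeq \QQ_p[\epsilon_{-1}]$ and recognizing the connective cover map. The only cosmetic difference is that the paper identifies the $p$-nilpotent subcategories directly with compact $T(1)$-local and $K(1)$-local spectra and applies telescope at that level, whereas you give the equivalent but more hands-on thick-subcategory generation argument.
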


\begin{proof}
	Tensoring the localization sequence (\ref{eqn:locseq}) for $n=0$ with the map $L_1^f\SP_p \to L_{K(1)}\SP$, we get a map of localization sequences
	
	\begin{center}
		\begin{tikzcd}
			\Mod(L_1^f\SP_p)^{p-\text{nil}}\ar[r]\ar[d] &\Mod(L_1^f\SP_p) \ar[r]\ar[d] & \Mod(L_0^fL_1^f\SP_p)\ar[d] \\
			\Mod(L_{K(1)}\SP)^{p-\text{nil}}\ar[r] &	\Mod(L_{K(1)}\SP) \ar[r] &	\Mod(L_0^fL_{K(1)}\SP)
		\end{tikzcd}
	\end{center}
	The category in the top left is the category of $T(1)$-local spectra, whereas the one in the bottom left is the category of $K(1)$-local spectra. By the telescope conjecture at height $1$ \cite{miller1981relations,mahowald1981bo}, these two categories agree, so the left vertical map is an equivalence.
	
	$L_0^f$ is just inverting $p$, and $L_0^fL_1^f\SP_p = L_0^f\SP_p = \QQ_p$. $L_0^fL_{K(1)}\SP$ is the ring $\QQ_p[\epsilon_{-1}]$, ($\epsilon_{-1}$ is usually called $\zeta$). It follows that the right vertical map is a connective cover map, and so applying \Cref{thm:devissage}, it is an equivalence on $K$-theory.
	
	Since $K$-theory is a localizing invariant, the middle vertical map is also an equivalence on $K$-theory.
\end{proof}

The next proposition is somewhat more subtle, because the ring $j_{\zeta}$ is not regular in the sense of \cite{ncgstuff,kcoconn}. Nevertheless, a formal neighborhood of its height $\geq2$ locus is regular, which is all that is needed.

\begin{proposition}\label{prop:K1localsequence}
	There is a cofibre sequence
	$K(\FF_p) \to K(j_{\zeta}) \to K(L_{K(1)}\SP)$.
\end{proposition}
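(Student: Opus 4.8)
The plan is to mimic the proof of \Cref{prop:L1LK1}, tensoring the localization sequence (\ref{eqn:locseq}) in the case $n=1$ with $j_{\zeta}$. Tensoring the sequence $\Sp_{\geq 2}^{\omega} \to \Sp^{\omega} \to L_1^f\Sp^{\omega}$ with $\Mod(j_{\zeta})$ produces a localization sequence
\begin{center}
	\begin{tikzcd}
		\Mod(j_{\zeta})^{\omega}\otimes \Sp_{\geq 2}^{\omega}\ar[r] & \Mod(j_{\zeta})^{\omega} \ar[r] &\Mod(L^f_1j_{\zeta})^{\omega}
	\end{tikzcd}
\end{center}
Since $L^f_1 j_{\zeta} = L_{K(1)}\SP$ (which was observed in the introduction, and follows because the underlying spectrum of $j_{\zeta}$ is the $-1$-connective cover of $L_{K(1)}\SP$), the right-hand term is $\Mod(L_{K(1)}\SP)^{\omega}$. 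Because $K$-theory is a localizing invariant, this sequence induces a cofibre sequence on $K$-theory, so it suffices to identify $K(\Mod(j_{\zeta})^{\omega}\otimes\Sp_{\geq 2}^{\omega})$ with $K(\FF_p)$.

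To do this, I would exhibit a compact generator of the category $\Mod(j_{\zeta})^{\omega}\otimes\Sp_{\geq 2}^{\omega}$ with coconnective endomorphism ring and then apply \Cref{thm:devissage}. The natural choice is $j_{\zeta}\otimes Z$, where $Z$ is the type $2$ complex $\SP/(p,v_1)$ for $p>2$ and $\SP/(2,\eta,v_1)$ for $p=2$; since $Z$ is a compact generator of $\Sp_{\geq 2}^{\omega}$, the object $j_{\zeta}\otimes Z$ generates $\Mod(j_{\zeta})^{\omega}\otimes\Sp_{\geq 2}^{\omega}$. The endomorphism ring is $\operatorname{End}_{j_{\zeta}}(j_{\zeta}\otimes Z) \simeq j_{\zeta}\otimes \operatorname{End}_{\SP}(Z)$, so I must show this $\EE_1$-ring is coconnective with $\pi_0 = \FF_p$ and satisfying the regularity/tor-dimension hypothesis of \Cref{thm:devissage}. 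Concretely, $\operatorname{End}_{\SP}(Z) = \map_{\SP}(Z,Z) = Z\otimes DZ$, and one smashes with $j_{\zeta}$; since $j_{\zeta}$ is $(-1)$-connective and $L_{K(1)}\SP$-local while $Z$ is a finite type $2$ spectrum, the relevant homotopy is controlled by $v_2$-periodic information, and one checks by an $\Adams$- or $K(1)$-local computation that $\pi_*(j_{\zeta}\otimes Z\otimes DZ)$ is concentrated in nonpositive degrees with $\pi_0 = \FF_p$. Then $\pi_0$ is a field, hence regular, and coconnectivity plus $\pi_0$ a field makes the tor-dimension condition automatic, so devissage gives $K(\Mod(j_{\zeta})^{\omega}\otimes\Sp_{\geq 2}^{\omega}) \simeq K(\FF_p)$.

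The main obstacle is the verification that $\operatorname{End}_{j_{\zeta}}(j_{\zeta}\otimes Z)$ is genuinely coconnective with $\pi_0 = \FF_p$ — this is where the specific choice of $Z$ (and, at $p=2$, the necessity of killing $\eta$) matters, and where the remark preceding the statement about $j_{\zeta}$ not being regular but having a regular formal neighborhood of its height $\geq 2$ locus is being cashed in. One has to know enough about the homotopy type of $j_{\zeta}\otimes Z$: the point is that smashing the type $2$ complex $Z$ with the height $\leq 1$ ring $j_{\zeta}$ collapses things drastically, essentially because $j_{\zeta}\otimes Z$ is a bounded-above spectrum whose homotopy is a truncated polynomial-exterior algebra, and its Spanier--Whitehead self-duality pairs the generators so that the product ends up coconnective. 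Carrying out this computation carefully — likely via the fibre sequence defining $j_{\zeta}$ from $\ell_p$ or $ko_2$ and known $v_1$-Bockstein / $K$-theory-of-$Z$ computations of Toda and Davis--Mahowald — is the technical heart, and once it is in hand the rest is a formal application of localization and \Cref{thm:devissage} exactly as in \Cref{prop:L1LK1}.
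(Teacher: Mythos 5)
Your outline reproduces the structure of the paper's argument exactly: tensor the $n=1$ localization sequence with $\Mod(j_\zeta)^\omega$, identify the cofibre term as $\Mod(L_{K(1)}\SP)^\omega$, take $j_\zeta\otimes Z$ with $Z = \SP/(p,v_1)$ (resp. $\SP/(2,\eta,v_1)$) as a compact generator of $\Mod(j_\zeta)^\omega\otimes\Sp_{\geq2}^\omega$, and reduce to showing $\End_{j_\zeta}(j_\zeta\otimes Z)$ is coconnective with $\pi_0=\FF_p$ so that Theorem 2.1 applies. That structure is right, and your observation that coconnectivity plus $\pi_0$ a field trivializes the tor-dimension hypothesis is also correct.

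But you stop exactly where the content is, and the hand-waving you substitute does not point in the right direction. You say the verification goes ``by an Adams- or $K(1)$-local computation,'' that the homotopy is ``controlled by $v_2$-periodic information,'' and that ``Spanier--Whitehead self-duality pairs the generators.'' None of this is what actually makes the computation collapse, and chasing it would lead you astray ($j_\zeta$ is not $K(1)$-local, and $v_2$-periodicity plays no role). The decisive fact, which is the whole reason $Z$ was chosen as it was, is the clean identity $Z\otimes\ell = \FF_p$ for $p>2$ (and $Z\otimes\ko = \FF_2$ for $p=2$). Since $Z$ is finite, tensoring commutes with homotopy fixed points, so $Z\otimes j_\zeta = (Z\otimes\ell_p)^{h\ZZ} = \FF_p^{h\ZZ}$, which is manifestly coconnective with $\pi_0=\FF_p$. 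For the endomorphism ring, one uses that $Z$ has a single cell in degree $0$ and the rest in strictly positive degrees, so $Z^\vee$ has its cells in nonpositive degrees; hence $\End_{j_\zeta}(j_\zeta\otimes Z) \simeq Z\otimes Z^\vee\otimes j_\zeta \simeq \FF_p^{h\ZZ}\otimes Z^\vee$ is still coconnective with $\pi_0=\FF_p$. This is elementary once you have the $Z\otimes\ell=\FF_p$ input — no Bockstein or $K(1)$-local work is needed. So the gap is genuine: you name the crux but do not identify the mechanism that resolves it.

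A smaller point: your justification that $L_1^f j_\zeta = L_{K(1)}\SP$ (``because the underlying spectrum of $j_\zeta$ is the $-1$-connective cover of $L_{K(1)}\SP$'') is not by itself sufficient. One needs to check both that $L_{K(1)}j_\zeta = L_{K(1)}\SP$ and that $j_\zeta$ and $L_{K(1)}\SP$ have the same rationalization, namely $\QQ_p[\epsilon_{-1}]$; the claim then follows from the $L_1^f$ fracture square. The $-1$-connective cover statement alone does not tell you the fibre is $L_1^f$-acyclic.
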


\begin{proof}
	Tensoring the localization sequence \ref{eqn:locseq} for $n=1$ (relative to $\Sp^{\omega}$) with $\Mod(j_{\zeta})^{\omega}$, we get a cofibre sequence
	
	\begin{center}
		\begin{tikzcd}
			\Mod(j_{\zeta})^{\omega}\otimes \Sp_{\geq 2}^{\omega}\ar[r] & \Mod(j_{\zeta})^{\omega} \ar[r] &\Mod(L^f_1j_{\zeta})^{\omega}
		\end{tikzcd}
	\end{center}
We claim that $L_1^fj_{\zeta} = L_{K(1)}\SP$. Indeed, it is clear that $L_{K(1)}j_{\zeta}= L_{K(1)}\SP$, and $j_{\zeta}$ and $L_{K(1)}\SP$ are rationally both $\QQ_p[\epsilon_{-1}]$. 
%
	
	It remains to identify $K(\Mod(j_{\zeta})^{\omega}\otimes \Sp_{\geq 2}^{\omega})$ with $K(\FF_p)$. By the thick subcategory theorem, $\Sp_{\geq 2}$ is generated by any type $2$ spectrum $Z$, so $\Mod(j_{\zeta})^{\omega}\otimes \Sp_{\geq 2}^{\omega}$ is generated by $j_{\zeta}\otimes Z$ for such $Z$. 
	
	Let $Z$ denote the type $2$ spectrum which for $p>2$ is the Smith-Toda complex $\SP/(p,v_1)$ constructed in \cite{toda1971spectra} and for $p=2$, is the type 2 complex $\SP/(2,\eta,v_1)$\footnote{There are actually 8 distinct $v_1$ self maps on $\SP/(2,\eta)$ and 4 nonisomorphic $\SP/(2,\eta,v_1)$s, but this is irrelevant here: for example all choices of $\SP/(2,\eta,v_1)$ become isomorphic after basechange to $j_{\zeta}$. This follows from the proof of this proposition, since they are in the heart of a bounded $t$-structure after basechange and have only one cell in degree $0$, so there is no obstruction to producing an isomorphism between the different versions.} constructed in \cite{davis1981v}. The key property of $Z$ is that $Z\otimes \ko = \FF_2$ at the prime $2$ and $Z\otimes \ell = \FF_p$ for $p>2$. It follows that $Z\otimes j_{\zeta} = \FF_p^{h\ZZ}$, which is in particular coconnective with $\pi_0 = \FF_p$.
	
	Since $Z$ has only one cell in dimension $0$ and the rest in positive degrees, $\End(Z\otimes j_{\zeta}) \cong Z\otimes Z^{\vee} \otimes j_{\zeta} $ is also coconnective with $\pi_0 = \FF_p$. We learn from Morita theory and \Cref{thm:devissage} that $K(j_{\zeta}\otimes \Sp_{\geq 2}) \cong K(\End_{j_{\zeta}}(j_{\zeta}\otimes X)) \cong K(\FF_p)$.
	
\end{proof}

	The following lemma is necessary to get that the cofibre sequences in \Cref{thm:K1localsphere} and \Cref{thm:compactk1local} are split on $\pi_*$.
\begin{lemma}\label{lem:nullona1invkthy}
	$K(\ZZ_p^{h\ZZ}\otimes \Sp_{\geq1}^{\omega}) \cong K(\FF_p)$ and the composite $$\Mod(\FF_p)^{\omega} \to j_{\zeta}\otimes \Sp_{\geq 2}^{\omega} \to j_{\zeta}\otimes \Sp_{\geq1}^{\omega} \to \ZZ_p^{h\ZZ}\otimes \Sp_{\geq1}^{\omega}$$ is null after applying $\Uadd$.
	
\end{lemma}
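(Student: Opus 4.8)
The plan is to handle the two assertions separately, using the same devissage-plus-Morita strategy that drove \Cref{prop:K1localsequence}. For the first assertion, I would tensor the localization sequence (\ref{eqn:locseq}) for $n=0$ (relative to $\Sp^\omega$) with $\Mod(\ZZ_p^{h\ZZ})^\omega$, producing the category $\ZZ_p^{h\ZZ}\otimes\Sp_{\geq1}^\omega$, and then choose a type $1$ generator. The natural choice at all primes is $\SP/p$: since $\ZZ_p^{h\ZZ}\otimes\SP/p = \FF_p^{h\ZZ}$ is coconnective with $\pi_0 = \FF_p$, the endomorphism ring $\End_{\ZZ_p^{h\ZZ}}(\ZZ_p^{h\ZZ}\otimes\SP/p)\cong \ZZ_p^{h\ZZ}\otimes \SP/p\otimes(\SP/p)^\vee$ is again coconnective with $\pi_0 = \FF_p$ (the Spanier–Whitehead dual $(\SP/p)^\vee$ is $\Sigma^{-1}\SP/p$, which has a single bottom cell in degree $-1$ contributing nothing in nonpositive degrees after the shift; more carefully, $\SP/p\otimes(\SP/p)^\vee$ has cells in degrees $-1,0,0,1$, and smashing with $\ZZ_p^{h\ZZ}$ and taking $\pi_{\geq 0}$ leaves only the degree-$0$ cell surviving to $\pi_0$, with higher homotopy in negative degrees). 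Then \Cref{thm:devissage} applies to this endomorphism ring and Morita theory gives $K(\ZZ_p^{h\ZZ}\otimes\Sp_{\geq1}^\omega)\cong K(\End_{\ZZ_p^{h\ZZ}}(\ZZ_p^{h\ZZ}\otimes\SP/p))\cong K(\FF_p)$.

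For the second assertion, I would trace through what the composite functor does on the chosen generators. Under the identification above, $\Mod(\FF_p)^\omega \to j_\zeta\otimes\Sp_{\geq2}^\omega$ sends $\FF_p$ to $j_\zeta\otimes Z$, and the further maps push this to $j_\zeta\otimes Z$ viewed in $j_\zeta\otimes\Sp_{\geq1}^\omega$ and then to $\ZZ_p^{h\ZZ}\otimes Z$ in $\ZZ_p^{h\ZZ}\otimes\Sp_{\geq1}^\omega$. The key point is that $Z$ is a type $2$ spectrum, hence its image in $\Sp_{\geq1}^\omega$ — while nonzero — becomes, after basechange along $\ZZ_p^{h\ZZ}$, an object that is already $p$-power torsion and is built from $\SP/p$ in a way that kills it in the relevant $K$-theory. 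More precisely: $\ZZ_p^{h\ZZ}\otimes Z = \FF_p^{h\ZZ}\otimes_{\FF_p}(\FF_p\otimes Z)$, and since $Z$ has type $2$, the $\FF_p$-module spectrum $\FF_p\otimes Z$ has even Euler characteristic — in fact for $Z = \SP/(p,v_1)$ (resp.\ $\SP/(2,\eta,v_1)$) the mod-$p$ homology is the exterior algebra on the appropriate classes, so $[\FF_p\otimes Z] = 0$ in $K_0(\FF_p) = \ZZ$ (the ranks cancel since the complex has the form of a tensor of two-cell complexes). Hence $[\ZZ_p^{h\ZZ}\otimes Z] = 0$ in $K_0(\ZZ_p^{h\ZZ}\otimes\Sp_{\geq1}^\omega)\cong\ZZ$. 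Since $\Uadd$ of $\Mod(\FF_p)^\omega$ is the connective $K$-theory spectrum of $\FF_p$, whose $\pi_0$ generates and which is determined on all of $\pi_*$ by where the generator $[\FF_p]$ goes (using that $\Uadd$ of a functor out of $\Mod(\FF_p)^\omega$ is a $K(\FF_p)$-module map, so it is determined by the image of the unit $[\FF_p]$), the vanishing of this class on $\pi_0$ forces the composite to be null after $\Uadd$.

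The main obstacle I anticipate is making the last step — "$[\FF_p\otimes Z]=0$ in $K_0$ forces nullity after $\Uadd$" — genuinely rigorous rather than merely plausible. One cannot argue purely on $\pi_0$; one needs that the composite functor, after applying $\Uadd$, is a map of $\Uadd(\Mod(\FF_p)^\omega) = \Uadd(\FF_p)$-module spectra (this uses that all the categories in sight are linear over $\Mod(\FF_p)^\omega$ via the source, or at least that the functor factors through module categories compatibly), and that $\Uadd(\FF_p)$ is generated as a module over itself by its unit, so that a module map is null iff it kills the unit iff it is zero on $\pi_0$. I would spell out this linearity: the functor $\Mod(\FF_p)^\omega\to\ZZ_p^{h\ZZ}\otimes\Sp_{\geq1}^\omega$ is $\Mod(\FF_p)^\omega$-linear because every functor in the composite is (the first is the inclusion of the generator's module category, the rest are basechange and localization functors, all of which respect the $\Mod(\FF_p)^\omega$-module structure coming from $Z$ being an $\FF_p$-module after the appropriate basechange). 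Granting this, the reduction to the $K_0$-computation is clean, and the $K_0$-computation itself is the routine Euler-characteristic statement that a type $\geq 1$ spectrum has vanishing class in $K_0(\FF_p)$ after mod-$p$ reduction.
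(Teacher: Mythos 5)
Your treatment of the first assertion is essentially the paper's (the paper just takes $\cof p \in \Mod(\ZZ_p^{h\ZZ})^{p-\text{nil}}$ directly as the generator; $\ZZ_p^{h\ZZ}\otimes\SP/p$ is the same object, and the devissage step is identical). The second assertion, however, has a real gap, and it is exactly the linearity you anticipate being a problem. The intermediate categories $j_{\zeta}\otimes\Sp_{\geq1}^{\omega}$ and $\ZZ_p^{h\ZZ}\otimes\Sp_{\geq1}^{\omega}$ are \emph{not} $\Mod(\FF_p)^{\omega}$-module categories: for instance $\ZZ_p^{h\ZZ}\otimes\SP/p^2$ lies in the target and its endomorphism ring has $\pi_0\cong\ZZ/p^2$, which is not an $\FF_p$-module, so the mapping spectra of this category simply do not carry $\FF_p$-module structures. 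The generator $Z$ is a finite spectrum, not an $\FF_p$-module, and the fact that $\End(j_{\zeta}\otimes Z)$ has $\pi_0=\FF_p$ does not by itself make it an $\FF_p$-algebra (that would require the multiplication to be $\FF_p$-bilinear, which is genuinely extra structure on an $\EE_1$-ring). Without $\Mod(\FF_p)^{\omega}$-linearity, $\Uadd$ applied to the composite is not a $K(\FF_p)$-module map, so the step from ``$[F(\FF_p)]=0$ in $K_0$'' to ``$\Uadd(F)=0$'' does not follow, and this is precisely the step where your argument stops.

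What makes the argument go through in the paper is a filtration of the bimodule, not an Euler-characteristic computation in $K_0$. One identifies the bimodule representing the composite (the paper reduces to $f^*\circ\pi_*$ and computes it as $\ZZ_p^{h\ZZ}\otimes_{j_{\zeta}}\FF_p^{h\ZZ}\cong\ZZ_p^{h\ZZ}\otimes Z\cong(\ZZ_p\otimes_R\FF_p)^{h\ZZ}$) and then produces a finite filtration of this bimodule, compatible with both module structures, by taking $\ZZ$-fixed points of the Postnikov filtration of $\ZZ_p\otimes_R\FF_p$. The associated graded is $\FF_p^{h\ZZ}[\epsilon_{|v_1|+1}]$ (resp.\ $\FF_2^{h\ZZ}[\epsilon_{|\eta|+1},\epsilon_{|v_1|+1}]$ at $p=2$); since $\Uadd$ splits finite filtrations of bimodules and sends an odd suspension such as $\Sigma^{|v_1|+1}$ to $-1$, the associated-graded terms cancel in pairs. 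Your Euler-characteristic observation is the shadow of this cancellation, but to turn it into a proof you must actually exhibit the bimodule filtration, at which point you have reconstructed the paper's argument.
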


\begin{proof}
	$\cof p \in \Mod(\ZZ_p^{h\ZZ})^{p-\text{nil}}$ has a coconnective endomorphism ring with $\pi_0 = \FF_p$, so $K(\Mod(\ZZ_p^{h\ZZ})^{p-\text{nil}}) = K(\FF_p)$ by \Cref{thm:devissage}. It remains to prove the second claim.
	
	Let $R$ be $\ell_p$ or $\ko_2$ depending on the prime so that $R^{h\ZZ} = j_{\zeta}$. Then there are $\ZZ$-equivariant $\EE_{\infty}$-maps $\pi:R\to \FF_p$ and $f:R\to \ZZ_p$, and $\FF_p$ is perfect over $R$ as it is $R\otimes Z$, where $Z$ is as in the proof of \Cref{prop:K1localsequence}. We use the same names to denote the induced maps on $\ZZ$-homotopy fixed points. There is also the connective cover map $g: \FF_p \to \FF_p^{h\ZZ}$.
	
	Taking $\ZZ$-homotopy fixed points, we obtain a diagram
	
	\begin{center}
		\begin{tikzcd}
			\Mod(\ZZ_p^{h\ZZ})^{\omega,p-\text{nil}}& \ar[l,"f^*"]\Mod(j_{\zeta})^{\omega}\otimes \Sp^{\omega}_{\geq2}\ar[r,"\pi^*", shift left = 1.5] & \ar[l,"{\pi_*}", shift left = 1.5]\Mod(\FF_p^{h\ZZ} )^{\omega}
		\end{tikzcd}
	\end{center}

	Here $\pi_*$ is the right adjoint of $\pi^*$, which exists since $\FF_p^{h\ZZ}$ is perfect over $j_{\zeta}$. We also have the map $g^*:\Mod(\FF_p)^{\omega} \to \Mod(\FF_p^{h\ZZ})^{\omega}$, and we can rephrase our lemma as saying that $f^*\circ \pi_* \circ g^*$ is null on $\Uadd$.
	
	We will in fact just show that $f^*\circ \pi_*$ is null on $\Uadd$. To do this, that composite is given by tensoring with the $\ZZ_p^{h\ZZ}-\FF_p^{h\ZZ}$-bimodule $\ZZ_p^{h\ZZ}\otimes_{j_{\zeta}} \FF_p^{h\ZZ}$. We have a chain of equivalences $$\ZZ_p^{h\ZZ}\otimes_{j_{\zeta}} \FF_p^{h\ZZ} \cong \ZZ_p^{h\ZZ}\otimes Z \cong (\ZZ_p\otimes Z)^{h\ZZ} \cong (\ZZ_p\otimes_R\FF_p)^{h\ZZ}$$ where at the second step, it is used that $Z$ is a finite spectrum.
	
	Thus the Postnikov filtration on the $\ZZ_p-\FF_p$-bimodule $\ZZ_p\otimes_{R} \FF_p$ gives a finite $\ZZ$-equivariant filtration with associated graded $\FF_p[\epsilon_{|v_1|+1}]$ for $p>2$ and $\FF_2[\epsilon_{|\eta|+1},\epsilon_{|v_1|+1}]$ for $p=2$. Taking the $\ZZ$-fixed points of this filtration, we get a finite filtration of the bimodule $\ZZ_p^{h\ZZ}\otimes_{j_{\zeta}} \FF_p^{h\ZZ}$ whose associated graded is $\FF_p^{h\ZZ}[\epsilon_{|v_1|+1}]$ for $p>2$ and $\FF_2^{h\ZZ}[\epsilon_{|\eta|+1},\epsilon_{|v_1|+1}]$ for $p=2$. As a $\ZZ_p^{h\ZZ}-\FF_p^{h\ZZ}$-bimodule, $\FF_p^{h\ZZ}$ corresponds to the functor $\Mod(\FF_p^{h\ZZ})^{\omega} \to \Mod(\ZZ_p^{h\ZZ})^{\omega}$ that is right adjoint to the base change from $\ZZ_p^{h\ZZ}$ to $\FF_p^{h\ZZ}$. Since $\epsilon_{|v_1|+1}$ is in odd degree, $\Uadd$ splits finite filtrations and sends suspension to $-1$, we learn that after applying $\Uadd$, the $f^*\circ \pi_*$ becomes null.
\end{proof}

\section{Topological cyclic homology}\label{sec:tc}

In this section we prove \Cref{thm:dgmpullback} and \Cref{thm:dgm-1connstrong}, which are refinements of \Cref{thm:dgmfixedpoints} of \Cref{thm:dgm-1conn}, and in particular extend the Dundas-Goodwillie-McCarthy theorem to certain $-1$-connective rings. This allows us to understand $K(j_{\zeta})$ in terms of the cyclotomic trace.

Given a ring $R$ giving $R$ a $\ZZ$ action is the same as giving an automorphism $\phi$ of $R$. $R^{h\ZZ}$ is then the pullback of the diagonal map $\Delta:R \to R\times R$ along the twisted diagonal $(1,\phi):R \to R\times R$. The idea for extending DGM to the nonconnective ring $R^{h\ZZ}$ is to use the work of Land--Tamme on the $K$ theory of pullbacks to relate the $K$-theory and $\TC$ of $R^{h\ZZ}$ to that of connective rings.

Recall that for any $\EE_1$-ring $R$, there is a standard $t$-structure on $\Mod(R)$, where a module is connective iff it is generated under colimits and extensions by $R$, and coconnective iff its underlying spectrum is.
\begin{lemma}\label{lem:tensorconnective}
	Let $R$ be a $-1$-connective $\EE_1$-ring. Then any $R$-module $M$ which is connective as a spectrum, $M$ is connective in the standard $t$-structure on $\Mod(R)$. In particular, for any right $R$-module $N$ whose underlying spectrum is connective, $M\otimes_RN$ is connective.
\end{lemma}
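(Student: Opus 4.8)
The plan is to reduce to the case where $M$ is a finitely generated module, then induct on the Postnikov tower of $M$. The key point is that for a $-1$-connective $\mathbb{E}_1$-ring $R$, one can still build connective modules out of shifts of $R$, because $\pi_0 R$ is a module over $R$ that sits in degree $0$, and the standard $t$-structure is characterized by generation under colimits and extensions.

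First I would reduce to the case that $M$ is bounded above: since $M = \operatorname{colim}_n \tau_{\leq n}M$ is a filtered colimit of its Postnikov truncations and the connective part of the $t$-structure is closed under filtered colimits, it suffices to treat each $\tau_{\leq n}M$, which is connective as a spectrum and bounded above. Then I would induct on the number of nonzero homotopy groups. For the base case, $M$ is an Eilenberg--MacLane spectrum concentrated in degree $k \geq 0$, i.e.\ $M \simeq \Sigma^k N$ where $N$ is a $\pi_0 R$-module (in the sense of discrete modules over the ordinary ring $\pi_0 R$). Here I would use that $\pi_0 R$, as an $R$-module, is connective in the standard $t$-structure: it is the cofiber of $\tau_{\geq 1}R \to R$ and $\tau_{\geq 1}R$ is connective as a spectrum hence — being generated by $R$ under colimits, as it is a connective module — connective in the $t$-structure; so $\pi_0 R$ is connective as a cofiber of connective modules. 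Any discrete $\pi_0 R$-module $N$ is a filtered colimit of its finitely generated submodules, each of which is a cokernel of a map of finite free $\pi_0 R$-modules, hence connective as an $R$-module; so $N$ and then $\Sigma^k N$ ($k \geq 0$) are connective. For the inductive step, the fiber sequence $\tau_{\geq 1}M \to M \to \pi_0 M$ expresses $M$ as an extension of $\pi_0 M$ (connective, by the base case) by $\tau_{\geq 1}M$, which has fewer nonzero homotopy groups and is connective as a spectrum, hence connective as an $R$-module by induction; extensions of connectives are connective.

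For the final sentence: given a right $R$-module $N$ whose underlying spectrum is connective, the functor $M \mapsto M \otimes_R N$ from left $R$-modules to spectra is exact and preserves colimits, and it sends $R$ to $N$, which is connective. Hence it sends everything generated by $R$ under colimits and extensions — which by the first part includes every $M$ connective as a spectrum — to a spectrum built from $N$ under colimits and extensions, which is connective.

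The main obstacle is the base case, namely verifying carefully that $\pi_0 R$ is connective in the standard $t$-structure on $\Mod(R)$ when $R$ is only $-1$-connective; once that is in hand the Postnikov induction is routine. One has to be slightly careful that "connective module" for $\tau_{\geq 1}R$ is justified by the fact that its underlying spectrum, being connective, is generated under colimits by $R$ via its own (ordinary connective) Postnikov tower, which is legitimate since $\tau_{\geq 1}R$ is $0$-connective — so this is not circular.
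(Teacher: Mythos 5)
Your approach (Postnikov induction on $M$, reducing to the case of a shifted discrete $\pi_0 R$-module and showing $\pi_0 R$ is a connective $R$-module) is genuinely different from the paper's, but it has a real gap in the base case, and it is precisely the gap you flag as ``the main obstacle.''

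The paper argues as follows: in the standard $t$-structure on $\Mod(R)$, connective modules are built from $R$ by colimits and extensions, so their underlying spectra are $-1$-connective (since $R$'s is and the forgetful functor preserves colimits and extensions); coconnective modules have coconnective underlying spectra by definition. Applying the truncation fibre sequence $\tau_{\geq 0}M \to M \to \tau_{<0}M$ to an $M$ whose underlying spectrum is connective, the long exact sequence shows $\tau_{<0}M$ has both $-1$-connective and coconnective underlying spectrum, hence is discrete in degree $-1$; but $\pi_{-1}M = 0$ and $\pi_{-2}\tau_{\geq 0}M = 0$ kill that too, so $\tau_{<0}M = 0$ and $M = \tau_{\geq 0}M$. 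No Postnikov induction, no need to exhibit $\pi_0 R$ as built from $R$.

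Your base case, by contrast, does not go through. Three concrete problems. First, $\pi_0 R$ is not the cofibre of $\tau_{\geq 1}R \to R$; that cofibre is $\tau_{\leq 0}R$, which still contains $\pi_{-1}R$ in degree $-1$ when $R$ is only $-1$-connective. Second, and more fundamentally, $\tau_{\geq 1}R$ (spectrum-level truncation) is typically not an $R$-module at all: the multiplication on $R$ does not preserve $\tau_{\geq 1}$. For example, with $R = \SP \oplus \Sigma^{-1}\SP$ the trivial square-zero extension, the product of $\zeta \in \pi_{-1}R$ with $\eta \in \pi_1 R$ is a nonzero element of $\pi_0 R$, so the action map $R \otimes \tau_{\geq 1}R \to R$ does not factor through $\tau_{\geq 1}R$, and there is no $R$-module lift of the inclusion $\tau_{\geq 1}R \hookrightarrow R$. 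Third, even granting a suitable replacement for $\tau_{\geq 1}R$, asserting that it is connective in the $t$-structure ``because its underlying spectrum is connective'' is exactly the lemma being proved; your attempted defence via ``its own Postnikov tower'' does not break the circle, because the layers of that tower are shifts of discrete $\pi_0 R$-modules, and proving those are connective in the $t$-structure is again the base case. The two-line $t$-structure argument in the paper is how one escapes this circularity; once you have it, the whole Postnikov induction becomes unnecessary. (Your treatment of the final sentence, about $M \otimes_R N$, is fine and matches the paper.)
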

\begin{proof}
	Using the $t$-structure on $R$-modules, we obtain a cofibre sequence $\tau_{\geq 0}M \to M \to \tau_{<0}M$. $\tau_{\geq 0}M$ is $-1$-connective as an underlying spectrum since $R$ is, and it is built from $R$ via colimits and extensions. $\tau_{<0}M$ is coconnected as an underlying spectrum. Since $M$ is connective as a spectrum and $\tau_{\geq0}M$ is $-1$-connective as a spectrum, $\tau_{<0}$ is connective as well, so must be $0$. It follows that $M = \tau_{\geq0}M$ is connective in the $t$-structure.
	$M\otimes_RN$ is connective since it is built from $R\otimes_RN = N$ out of colimits and extensions and $N$ is connective.
\end{proof}
\begin{lemma}\label{lem:tensorbase}
	Suppose that $R \to R'$ is an $i$-connective map of $-1$-connective $\EE_1$-rings for $i\geq -1$, $M,N$ are right and left $R'$-modules that are connective in the standard t-structure. Then the map $M\otimes_R N \to M\otimes_{R'}N$ is $(i+1)$-connective.
\end{lemma}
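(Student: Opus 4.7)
The plan is to reduce to the case $M = N = R'$ by dévissage, then identify the fibre of the resulting multiplication map in terms of $F_0 := \mathrm{fib}(R \to R')$ and bound it via a bar complex estimate.

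First, the class of pairs $(M, N)$ of connective $R'$-modules (in the standard $t$-structure) for which the map $M \otimes_R N \to M \otimes_{R'} N$ is $(i+1)$-connective is closed in each variable under colimits and cofibre extensions, since $(i{+}1)$-connective spectra form a full subcategory of $\mathrm{Sp}$ closed under these operations, and fibres of maps commute with colimits and cofibre sequences in a stable $\infty$-category. As $\mathrm{Mod}(R')_{\geq 0}$ is generated by $R'$ under colimits and extensions, it suffices to treat the base case $M = N = R'$, where the map becomes the multiplication $\mu\colon R' \otimes_R R' \to R'$.

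For this base case, the ring map $R \to R'$ induces a section $s\colon R' \simeq R' \otimes_R R \to R' \otimes_R R'$ of $\mu$, so $\mathrm{fib}(\mu) \simeq \mathrm{cofib}(s)$. Applying $(-) \otimes_R R'$ to the cofibre sequence of $R$-bimodules $R \to R' \to C_0$, where $C_0 := \mathrm{cofib}(R \to R') \simeq \Sigma F_0$, produces $R' \to R' \otimes_R R' \to C_0 \otimes_R R'$, whose first map is $s$. Hence $\mathrm{fib}(\mu) \simeq C_0 \otimes_R R' \simeq \Sigma(F_0 \otimes_R R')$, and it suffices to show that $F_0 \otimes_R R'$ is $i$-connective.

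To bound its connectivity, apply $F_0 \otimes_R (-)$ to $R \to R' \to C_0$ to obtain the cofibre sequence $F_0 \to F_0 \otimes_R R' \to F_0 \otimes_R C_0$. The first term is $i$-connective by hypothesis. For the third, the two-sided bar complex $F_0 \otimes_R C_0 \simeq |F_0 \otimes R^{\otimes n} \otimes C_0|_{[n] \in \Delta^{\mathrm{op}}}$ has simplicial level of connectivity $i + (-n) + (i+1) = 2i+1-n$ (using that $R$ is $(-1)$-connective while $F_0$ and $C_0$ are $i$- and $(i+1)$-connective respectively), so the $\Sigma^n$-shift from the skeletal filtration makes each associated graded piece $(2i+1)$-connective; hence $F_0 \otimes_R C_0$ is $(2i+1)$-connective. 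The long exact sequence of homotopy then forces $F_0 \otimes_R R'$ to be $i$-connective, using the inequality $2i+1 \geq i$ which holds precisely when $i \geq -1$ — exactly the hypothesis of the lemma.

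The main obstacle is obtaining the tight $(i+1)$-connectivity in the final step: the naive spectrum-level tensor estimate for $F_0 \otimes_R R'$ is not sharp enough when $R$ is only $(-1)$-connective, so the argument must exploit the fact that $F_0$ appears on \emph{both} sides of the final cofibre sequence, which renders $F_0 \otimes_R C_0$ substantially more connective than $F_0 \otimes_R R'$ itself — it is exactly this gap that the $\Sigma^n$-shift in the skeletal filtration of the bar complex captures.
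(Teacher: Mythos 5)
Your proof is correct and follows essentially the same route as the paper: reduce to $M=N=R'$ by closure under colimits and extensions, identify $\operatorname{fib}(\mu)$ with the cofibre of the unit section, i.e.\ with $C_0\otimes_R R'$, and then bound the error term in the cofibre sequence obtained from $R\to R'\to C_0$. The only (harmless) difference is that where the paper bounds the bilinear term $M'\otimes_R M'\simeq \Sigma(F_0\otimes_R C_0)$ by citing \Cref{lem:tensorconnective}, you re-derive the same $(2i+1)$-connectivity directly via the bar resolution and skeletal filtration.
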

\begin{proof}
	$M,N$ are built out of $R'$ under colimits and extensions, so it suffices to assume $M=N=R'$. Then we are trying to show that $R'\otimes_RR' \to R'$ is $i$-connective. This map has a section given by the left unit, so its fibre is the cofibre of the section. The cofibre of the unit map, $M'$, is $(i+1)$-connective by assumption. $M'\otimes_RR'$ is an extension of $M'\otimes_RR = M'$ by $M'\otimes_RM'$. $M'\otimes_RM'$ is $(2i+2)$-connective by \Cref{lem:tensorconnective}, so the result follows.
\end{proof}

The following proposition is due to Waldhausen \cite[Proposition 1.2]{waldhausen1984algebraic}\footnote{see also \cite[Lemma 2.4]{Land_2019}}, except he stated it for $\ZZ$-algebras, though the general proof is identical. We reproduce the proof below for convenience and future reference. The proposition is a precursor to trace methods.
\begin{proposition}[Waldhausen]\label{lem:walderrorterm}
	Let $f:R \to S$ be an $i$-connective map of connective $\EE_1$-algebras for $i\geq1$. Then $\fib(K(f))$ is $(i+1)$-connective, with $\pi_{i+i}\fib(K(f)) = \mathrm{HH}_0(\pi_0S;\pi_i\fib f)$.
\end{proposition}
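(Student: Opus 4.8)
The plan is to reduce to the case where $f$ is a map of simplicial rings (or, in the $\infty$-categorical language, use the plus-construction / group-completion model of $K$-theory), and then run the standard Waldhausen argument comparing $K$-theory with $\mathrm{GL}$ in low degrees. First I would observe that since $f$ is $i$-connective with $i\geq 1$, it is in particular $1$-connective, so $\pi_0 R \cong \pi_0 S$; call this ring $A$. Both $K(R)$ and $K(S)$ are connective (the rings are connective), so $\fib(K(f))$ is automatically $0$-connective, and on $\pi_0$ and $\pi_1$ the map $K(f)$ is an isomorphism because in degrees $\leq 1$ the $K$-theory of a connective ring depends only on $\pi_0$ (indeed $K_0(R)=K_0(A)$ and $K_1(R)=K_1(A)$ via the determinant/$\mathrm{GL}_\infty$ description, using that $\mathrm{GL}_n(R)\to\mathrm{GL}_n(A)$ is $1$-connective on classifying spaces). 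Hence $\fib(K(f))$ is already $2$-connective, which covers the smallest case $i=1$ partially; for the general statement I need to go up to degree $i+1$.

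The main computation is to identify $\pi_{i+1}\fib(K(f))$ with $\mathrm{HH}_0(A;\pi_i\fib f)=\pi_i(\fib f)\otimes_{A\otimes A^{\mathrm{op}}}A$, i.e. the zeroth Hochschild homology (coinvariants) of the $A$-bimodule $\pi_i\fib f$. The cleanest route is via relative $K$-theory and the fiber sequence comparing $K$-theory to cyclic bar / $\mathrm{THH}$-type constructions: by the connectivity of $f$, the relative term $\fib(K(f))$ in this range is computed by the relative $\mathrm{GL}$, and the first nonvanishing homotopy of $B\mathrm{GL}(f)$ is $H_1$ of the homotopy fiber of $\mathrm{GL}_\infty(R)\to\mathrm{GL}_\infty(S)$, which is $\mathrm{GL}_\infty$ of the (non-unital) ring $\fib f$ truncated appropriately. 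Concretely, the fiber of $\mathrm{GL}_n(R)\to\mathrm{GL}_n(S)$ has $\pi_i$ equal to $M_n(\pi_i\fib f)$ (matrices over the bimodule), and its abelianization/$H_{i+1}$ of the classifying space, after stabilizing in $n$ and passing through the plus construction, becomes the trace coinvariants $M_\infty(\pi_i\fib f)_{\mathrm{tr}} = \pi_i(\fib f)\otimes_{A^e}A = \mathrm{HH}_0(A;\pi_i\fib f)$. I would phrase this using the skeletal/Postnikov filtration of $f$: replacing $S$ by the stage where $\fib f$ is an Eilenberg--MacLane bimodule concentrated in degree $i$, one reduces to a square-zero extension $R\to R\oplus \Sigma^i \pi_i(\fib f)$ (shifted), and for square-zero extensions the relative $K$-theory in the first nontrivial degree is classically $\mathrm{HH}_0$ of the bimodule — this is exactly the bottom of the Goodwillie/Dundas--McCarthy Taylor tower, $\partial K(A;M)\simeq \mathrm{THH}(A;M)$ beginning with $\mathrm{HH}_0(A;M)$ in the appropriate degree.

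The key steps in order: (1) reduce to $A=\pi_0R=\pi_0S$ and note $\fib f$ is an $i$-connective $R$-bimodule; (2) take the Postnikov tower of $f$ and use that $K$-theory commutes with the relevant limits/truncations in this range to reduce to the case $\fib f = \Sigma^i N$ for an $A$-bimodule $N=\pi_i\fib f$, i.e. to a (shifted) square-zero extension $S = R\oplus \Sigma^i N$; (3) for this square-zero extension, use the stability result that $\fib(K(R\oplus \Sigma^i N)\to K(R))$ is $(i+1)$-connective with bottom homotopy group the linearization, which is $\mathrm{THH}(A;N)$ shifted into degree $i+1$, so $\pi_{i+1}=\mathrm{HH}_0(A;N)=\mathrm{HH}_0(\pi_0S;\pi_i\fib f)$; (4) conclude by reassembling the Postnikov tower, checking the spectral sequence gives no further contributions below degree $i+1$. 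I expect step (3) — making precise the claim that the bottom of the relative $K$-theory of a square-zero extension is Hochschild homology $H_0$ with the right naturality, and that the Postnikov reassembly in step (4) introduces nothing in degree $\leq i+1$ — to be the main obstacle; everything else is bookkeeping with connectivity estimates (for which Lemmas~\ref{lem:tensorconnective} and~\ref{lem:tensorbase} furnish the needed tensor-product bounds). Alternatively one can cite the Dundas--McCarthy theorem that relative $K$-theory agrees with relative $\mathrm{TC}$ (hence with $\mathrm{THH}$ in the stable range) and read off $\mathrm{HH}_0$ as $\pi_0\mathrm{THH}(A;N)$, which is the most economical way to finish.
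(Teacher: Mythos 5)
Your first paragraph (the $\BGL$/trace route) is in fact essentially the paper's argument: identify the fiber of $\BGL(R)\to\BGL(S)$ with $\BGL$ of the nonunital ring $\fib f$ in low degrees, read off $H_{i+1}$ of that fiber via Hurewicz as $M_\infty(\pi_i\fib f)$, and take $\pi_0\GL$-coinvariants via the trace to get $\mathrm{HH}_0(\pi_0 S;\pi_i\fib f)$. What you elide, though, is the one genuinely delicate point: passing from $\BGL$ to $\BGL^+$. It is not automatic that the lowest nonvanishing homology of $\fib\BGL(f)$ computes the lowest nonvanishing homotopy of $\fib K(f)$. The paper handles this by comparing the two homology Serre spectral sequences (for $\BGL(R)\to\BGL(S)$ and for $\BGL(R)^+\to\BGL(S)^+$), using that the plus construction is a homology isomorphism on both base and total space, and crucially that the coefficient system in the $+$-side spectral sequence is \emph{trivial} because $\BGL(R)^+\to\BGL(S)^+$ underlies a map of spectra. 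That trivial-coefficients observation is what lets one read off $H_{i+1}$ of $\fib K(f)$ directly and match it against the trace coinvariants from the other spectral sequence. Your phrase "after stabilizing in $n$ and passing through the plus construction" is exactly where this argument has to live.

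Your enumerated steps (2)--(4), by contrast, take a genuinely different route: Postnikov reduction of $f$ to a tower of square-zero extensions, then citing the Goodwillie/Dundas--McCarthy stable range result that the relative $K$-theory of a square-zero extension by a $k$-connective bimodule $N$ begins with $\mathrm{HH}_0(\pi_0R;\pi_kN)$ in degree $k+1$. This is valid but is a much heavier hammer than what is needed. The paper explicitly labels this proposition "a precursor to trace methods," and indeed the square-zero-extension stability theorem (and a fortiori the full $K\simeq\mathrm{TC}$ statement of DGM you mention as the "most economical" finish) is historically built on precisely this kind of elementary $\BGL$ argument; depending on which modern proof of the stability theorem one cites the logical dependency may not be strictly circular, but one is certainly not saving work. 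There is also a small bookkeeping gap in step (2): filtering the map $f$ so that each stage is a square-zero extension with the \emph{right} bimodule requires care (the truncations $\tau_{\leq n}R\to\tau_{\leq n}S$ do not directly produce $\fib=\Sigma^i N$ as an $A$-bimodule without an argument), and the reassembly spectral sequence in step (4) needs a connectivity estimate to see nothing else lands in degree $\leq i+1$. None of this is fatal, but the direct Serre spectral sequence comparison the paper gives is shorter, self-contained, and avoids appealing to results that this proposition is meant to underpin.

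One minor correction: the claim that $\fib(K(f))$ is automatically $0$-connective "because both $K$-theories are connective" is not quite right as stated --- the fiber of a map of connective spectra can have $\pi_{-1}\neq 0$; what saves you is that $\pi_0K(R)\to\pi_0K(S)$ is an isomorphism (both equal $K_0(\pi_0R)$), which kills $\pi_{-1}$ of the fiber.
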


\begin{proof}
	Since $i+1 \geq2$, it suffices by the Hurewicz theorem to show that $\fib(K(f))$ is $(i+1)$-connective and that $H_{i+1}\fib(K(f)) \cong \mathrm{HH}_0(\pi_0R;\pi_i\fib f)$. The nonpositive $K$-theory only depends on $\pi_0R$ for a connective ring \cite[Theorem 9.53]{BGT}, so by the Hurewicz theorem, it suffices to show the connectivity statement at the level of homology of $\BGL^+$. 
	
	Consider the map of homology Serre spectral sequences computing the homologies of $\BGL(R)$ and $\BGL(R)^{+}$ via the vertical maps in the diagram below:
	
	\begin{center}
		\begin{tikzcd}
			\BGL(R)	\ar[r]\ar[d] & \BGL(R)^{+}\ar[d]\\
			\BGL(S)	\ar[r] & \BGL(S)^+
		\end{tikzcd}
	\end{center}

	The signature of the $E_2$-term of the Serre spectral sequence for the left vertical map is $$H_p(\BGL(S);H_q(\fib \BGL(f))) \implies H_{p+q}(\BGL(R))$$ The first nonzero term in this spectral sequence is $H_0(\BGL (S);H_{i+1}(\fib \BGL (f)))$. Because $\GL$ and the infinite matrix ring $M$ only disagree on $\pi_0$ and $f$ is $i$-connective for $i\geq 1$, $\fib \GL f$ can be identified with $\fib Mf$. By the Hurewicz theorem, $H_{i+1}(\fib \BGL f)$ then agrees with $\pi_{i}\fib Mf = \pi_iM\fib f$, where we view $\fib f$ as a nonunital ring in order to make sense of $Mf$. Under this identification, the action of $\pi_0\GL(S)$ is identified with conjugation action of $\pi_0\GL(R)$ on $\pi_i M\fib f$. The trace then gives an isomorphism $\tr:H_0(\BGL (S);\pi_{i}(M\fib f)) \to \mathrm{HH}_0(\pi_0S;\pi_i\fib f)$.
	
	The $E_2$-term of the Serre spectral sequence for the right vertical map on the other hand is $H_*(\BGL(S)^+;H_*(\fib K(f)))$. Since $H_*(BGL(R)) \cong H_*(BGL(R))^{+}$, the map of Serre spectral sequences yields an isomorphism on abutments. $\BGL(R)^+ \to \BGL(S)^+$ comes from a map of spectra (the $1$-connective cover of $K$-theory), so the coefficient system for homology is trivial, and in the lowest degree $s$ in which $\fib(K(f))$ is nonzero, its homology is $H_s(\fib K(f))$, which must survive to the $E_{\infty}$-page for degree reasons. Since in the left vertical map Serre spectral sequence, there are no terms contributing to $H_s$ for $s\leq i$, we must then have that $H_s(\fib K(f)) = 0$ for $s\leq i$, giving the connectivity statement. By looking at the lowest nonvanishing terms in the spectral sequences and comparing the two spectral sequences, we then learn that $H_{i+1}(\fib K(f))) \cong H_0(\BGL (S);\pi_{i}(M\fib f)) \cong \mathrm{HH}_0(\pi_0S;\pi_i\fib f)$, proving the proposition.
\end{proof}

The following proposition is likely well known:

\begin{proposition}\label{prop:tclocalization}
	Let $S$ be a set of primes, and $R \to R'$ a map of connective $\EE_1$-rings that is an isomorphism on $\pi_0$. Then $\fib(\TC(R)\to \TC(R'))[S^{-1}] \cong \fib(\TC(R[S^{-1}]) \to R'[S^{-1}])$.
\end{proposition}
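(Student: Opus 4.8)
The plan is to reduce to the analogous statement for $\THH$ and its cyclotomic structure, using the fact that $\TC$ is built from $\THH$ by a formula that commutes with localization away from $S$ in the relevant range. Write $E = \fib(R \to R')$, viewed as a nonunital connective ring spectrum (or rather, work with $R$-bimodules). The first step is to observe that $\THH(R')$ is computed as a cyclic bar construction, so the relative term $\fib(\THH(R)\to\THH(R'))$ admits a filtration whose associated graded pieces are smash powers involving $E$ over $\pi_0 R = \pi_0 R'$; since $E$ is $1$-connective, this relative term is $1$-connective, and more importantly the comparison map to the corresponding construction after inverting $S$ is compatible with the cyclotomic Frobenius. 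Localization away from $S$ is a smashing, finite-colimit-preserving and filtered-colimit-preserving localization, hence commutes with $\THH$ (a colimit of smash powers) and with the genuine fixed-point / $\mathrm{TR}$ or, in the Nikolaus--Scholze picture, with the equalizer defining $\TC$ of a bounded-below cyclotomic spectrum. So I would first show $\fib(\THH(R)\to\THH(R'))[S^{-1}] \cong \fib(\THH(R[S^{-1}])\to\THH(R'[S^{-1}]))$ as cyclotomic spectra.

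The second step is to pass from $\THH$ to $\TC$. Here I would use that for a bounded-below cyclotomic spectrum $X$ (which $\fib(\THH(R)\to\THH(R'))$ is, being $1$-connective), $\TC(X)$ sits in a fibre sequence $\TC(X) \to \THH(X)^{hS^1} \xrightarrow{\varphi - \mathrm{can}} (\THH(X)^{tC_p})^{hS^1}$ — more precisely one uses $\mathrm{TC}(X;p) = \mathrm{eq}(X^{hS^1} \rightrightarrows (X^{tC_p})^{hS^1})$ and then rationalizes/assembles over primes. Both $(-)^{hS^1}$ and $(-)^{tC_p}$ commute with $[S^{-1}]$ when applied to a uniformly bounded-below spectrum, because the relevant homotopy-fixed-point and Tate spectral sequences converge and $[S^{-1}]$ is exact and commutes with the (co)limits involved once bounded-below-ness is in hand. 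Thus $\TC$ of the relative cyclotomic spectrum commutes with inverting $S$. Finally, since $R \to R'$ is a $\pi_0$-isomorphism, by \Cref{thm:devissage}-style reasoning — or more simply, directly — the fibre $\fib(\TC(R)\to\TC(R'))$ is exactly $\TC$ of the relative cyclotomic spectrum $\fib(\THH(R)\to\THH(R'))$, using that $\TC$ takes fibre sequences of bounded-below cyclotomic spectra to fibre sequences. Combining the two steps gives the claim.

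The main obstacle, and the point that needs genuine care, is the interchange of $[S^{-1}]$ with the homotopy limits $(-)^{hS^1}$ and $(-)^{tC_p}$: localization commutes with colimits for free but not with limits, so one really must use that the cyclotomic spectrum $\fib(\THH(R)\to\THH(R'))$ is uniformly bounded below (here $1$-connective, by \Cref{lem:walderrorterm}-type connectivity estimates applied to the cyclic bar filtration, or by \Cref{lem:tensorconnective}) and then invoke conditional convergence of the homotopy-fixed-point and Tate spectral sequences together with the fact that inverting a set of primes is exact. A clean way to package this is: on the subcategory of $p$-cyclotomic spectra that are bounded below, $\TC(-;p)$ preserves filtered colimits and is computed by a finite limit in each degree, hence commutes with the exact functor $[S^{-1}]$; assembling over $p\in S$ versus $p\notin S$ and rationalizing then yields the statement as written. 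I would also double check the edge case where $S$ contains or omits the ambient prime(s) relevant to $j_\zeta$, but for the general statement no primes need be singled out.
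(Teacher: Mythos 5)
Your proposal correctly isolates the point that needs care --- interchanging $[S^{-1}]$ with the limits $(-)^{hS^1}$ and $(-)^{tC_p}$ --- but then dismisses it on grounds that do not hold, and this is a genuine gap. Bounded-below-ness does \emph{not} make the Tate construction commute with inverting primes: by the Segal conjecture/Lin's theorem, $\SP^{tC_p}\simeq \SP^{\wedge}_p$, so $\SP^{tC_p}[\tfrac 1p]$ has $\pi_0=\QQ_p\neq 0$, while $(\SP[\tfrac1p])^{tC_p}=0$. Likewise $(-)^{hS^1}$ involves an infinite limit over the cells of $\CC P^\infty$, and if the coefficient homotopy groups have unbounded $p$-power torsion (which they do for the relative $\THH$ of, say, $\SP\to\ZZ$), one gets infinite products like $\prod_n \ZZ/p^n$ that are not killed by inverting $p$. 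Your ``clean packaging'' claim that $\TC(-;p)$ is computed by a finite limit in each degree is also not right: the Tate spectral sequence for a bounded-below spectrum is unbounded in the filtration direction, so $\pi_n X^{tC_p}$ receives contributions from infinitely many $E_2$ terms. Indeed, absolute $\TC$ visibly fails to commute with $[\tfrac 1p]$: $\TC(\FF_p)\simeq\ZZ_p\oplus\Sigma^{-1}\ZZ_p$ gives $\TC(\FF_p)[\tfrac1p]\simeq\QQ_p\oplus\Sigma^{-1}\QQ_p\neq 0=\TC(\FF_p[\tfrac 1p])$. So the assertion that $\TC$ commutes with $[S^{-1}]$ for bounded-below cyclotomic spectra is simply false, and $1$-connectivity of the relative piece does not rescue it.

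The paper's proof takes a different route that avoids this entirely. Since $R\to R'$ is a $\pi_0$-isomorphism of connective rings, DGM identifies $\fib(\TC(R)\to\TC(R'))$ with $\fib(K(R)\to K(R'))$, and nonconnective $K$-theory \emph{does} commute with filtered colimits. This lets one work Zariski-locally on $\Spec\ZZ$ and reduce to a single prime; then a two-out-of-three argument reduces to $R\to\pi_0 R$; then the connectivity estimate of \Cref{lem:walderrorterm} lets one pass up the Postnikov tower one stage at a time; and finally the square-zero case $\tau_{\leq n}R\to\tau_{\leq n-1}R$ is handled by a theorem of Raskin, whose formula for relative $\TC$ of a square-zero extension is manifestly compatible with localization. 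The crucial point, which your argument misses, is that the commutation with $[S^{-1}]$ here is fundamentally a $K$-theoretic phenomenon (localizing invariants commuting with filtered colimits), not a property of the cyclotomic $\TC$-formula applied to an arbitrary bounded-below input.
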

\begin{proof}
	The fibre of $\TC$ is in this case equivalent to the fibre of $K$-theory by \cite{dundas2012local}. Since $K$-theory is a filtered colimit preserving localizing invariant, it is a sheaf with respect to localizing at primes, with stalks the relative $\TC$ of the $p$-local rings. Thus it suffices to show the result for $R,R'$ $p$-local rings. In this case, $\TC$ is already $p$-local, so we can assume that $S = \{p\}$. The set of maps $R \to R'$ clear satisfy the $2$ out of $3$ property, so by considering the composite $R \to R' \to \pi_0R' = \pi_0R$, we can assume the map is $R \to \pi_0R$.
	
	Now since rationalization is $t$-exact, and because of the connectivity result in \Cref{lem:walderrorterm}, it suffices to show the result for the map $\tau_{\leq n}R \to \tau_{\leq n-1}R$. This is a square zero extension, so now the result follows from \cite[Theorem 5.15.1]{raskin2018dundas}.
\end{proof}

By taking the cospan $R \to R\times R \leftarrow R$ associated to $R^{h\ZZ}$ as discussed above, we see that the following result refines \Cref{thm:dgmfixedpoints}:
\begin{theorem}\label{thm:dgmpullback}
	Suppose we are given a map of cospans of connective $\EE_1$-rings that is levelwise $i$-connective for $i\geq 1$. Then for any truncating invariant $E$, the map on the pullbacks induces an $E$-equivalance, and an $i$-connective map on $\TC$.
\end{theorem}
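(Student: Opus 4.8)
The plan is to feed the pullback square defining $P=A\times_C B$ into the Land--Tamme theorem \cite{Land_2019} and compare the result with the analogous output for $P'=A'\times_{C'}B'$. Write the cospan as $A\to C\leftarrow B$. By \cite{Land_2019} there is an $\EE_1$-ring $\Pi:=A\odot_P^C B$, functorial in the cospan, whose underlying spectrum is $A\otimes_P B$, together with ring maps $A\to\Pi\leftarrow B$, such that for every localizing invariant $E$ the square with $E(P)$ mapping to $E(A)$ and $E(B)$ and both of these mapping to $E(\Pi)$ is cartesian; likewise one obtains $\Pi':=A'\odot_{P'}^{C'}B'$ with its cartesian square, and the map of cospans induces a map from the first square to the second. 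Hence it suffices to control the three maps $A\to A'$, $B\to B'$ and $\Pi\to\Pi'$ after applying $E$, respectively $\TC$.

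The crux is that $\Pi$ and $\Pi'$ are connective and that $\Pi\to\Pi'$ is $i$-connective. Since $A$, $B$, $C$ are connective, the underlying spectrum $\fib(A\oplus B\to C)$ of $P$ is $(-1)$-connective, so $A\otimes_P B$ is connective by \Cref{lem:tensorconnective}; the same applies to $\Pi'$. For the connectivity of the map, first note that $P\to P'$ is $(i-1)$-connective: its fibre is $\fib(\fib(A\oplus B\to A'\oplus B')\to\fib(C\to C'))$, the fibre of a map of $i$-connective spectra. Now factor $\Pi=A\otimes_P B\to A'\otimes_P B'\to A'\otimes_{P'}B'=\Pi'$. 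The first map is $i$-connective: $A\to A'$ and $B\to B'$ are $i$-connective maps of $P$-modules which are connective in the standard $t$-structure (connectivity being detected on underlying spectra, by \Cref{lem:tensorconnective}), and $-\otimes_P -$ is right $t$-exact. The second map is $i$-connective by \Cref{lem:tensorbase} applied to the $(i-1)$-connective map $P\to P'$, which is allowed since $i-1\geq 0$. In particular $\Pi\to\Pi'$, like $A\to A'$ and $B\to B'$, is an isomorphism on $\pi_0$.

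For the first conclusion, let $E$ be truncating. Then $E(A)\simeq E(\pi_0 A)\simeq E(\pi_0 A')\simeq E(A')$, and similarly $E(B)\simeq E(B')$ and $E(\Pi)\simeq E(\Pi')$ by the previous paragraph; since both Land--Tamme squares are cartesian, $E(P)\to E(P')$ is an equivalence. For the second conclusion, note that $\TC$ is itself a localizing invariant — for instance it is the cofibre of the natural transformation $\fib(\tr)\to K$ from the truncating, hence localizing, fibre of the cyclotomic trace $\tr$ to $K$-theory — so Land--Tamme applies to $\TC$ as well. For any $i$-connective map $R\to R'$ of connective $\EE_1$-rings with $i\geq 1$, the square with vertices $K(R)$, $\TC(R)$, $K(R')$, $\TC(R')$ is cartesian, since its horizontal fibres agree: $\fib(\tr)(R)\simeq\fib(\tr)(\pi_0 R)\simeq\fib(\tr)(\pi_0 R')\simeq\fib(\tr)(R')$, using that $\fib(\tr)$ is truncating and $\pi_0 R\cong\pi_0 R'$. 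Hence $\fib(\TC(R)\to\TC(R'))\simeq\fib(K(R)\to K(R'))$, which is $(i+1)$-connective by \Cref{lem:walderrorterm}. Applying this to $A\to A'$, $B\to B'$ and $\Pi\to\Pi'$, the fibre of $\TC(P)\to\TC(P')$ — which by the $\TC$-version of the Land--Tamme square is the pullback over $\fib(\TC(\Pi)\to\TC(\Pi'))$ of $\fib(\TC(A)\to\TC(A'))$ and $\fib(\TC(B)\to\TC(B'))$, hence a pullback of $(i+1)$-connective spectra — is $i$-connective, as claimed.

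The step I expect to require the most care is the second paragraph: identifying the underlying spectrum of the Land--Tamme ring $A\odot_P^C B$ as $A\otimes_P B$, and then propagating the connectivity of the cospan maps through to $\Pi\to\Pi'$. The essential structural point is that the pullbacks $P$ and $P'$, although not connective in general, are $(-1)$-connective, which is precisely what allows \Cref{lem:tensorconnective} and \Cref{lem:tensorbase} to be applied; the remaining manipulations are formal properties of cartesian squares together with the defining property of a truncating invariant.
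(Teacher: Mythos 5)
Your proof is correct and follows essentially the same route as the paper: apply the Land--Tamme theorem to both cospans to realize $\Uloc(P)$ and $\Uloc(P')$ as pullbacks involving the connective rings $A,B,\Pi$ (resp.\ $A',B',\Pi'$), control $\Pi\to\Pi'$ via \Cref{lem:tensorconnective} and \Cref{lem:tensorbase}, and then pass the $E$-equivalences and $\TC$-connectivity through the pullback squares. The one place you are slightly more explicit than the paper is the deduction that an $i$-connective map of connective rings (with $i\geq 1$) induces an $(i+1)$-connective map on $\TC$: you spell out the DGM identification $\fib(\TC(R)\to\TC(R'))\simeq\fib(K(R)\to K(R'))$ and then quote \Cref{lem:walderrorterm}, whereas the paper asserts this step without comment.
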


\begin{proof}
	Let
	\begin{center}
		\begin{tikzcd}
			R_0\ar[r]\ar[d] &R_1 \ar[d] &\ar[l]R_2 \ar[d] \\
			S_0\ar[r]& S_1&\ar[l]S_2
		\end{tikzcd}
	\end{center}
be the map of cospans in consideration, and let $R_3,S_3$ denote the pullbacks.
	Applying \cite{Land_2019}, we obtain a pullback square:
	
	\begin{center}
		\begin{tikzcd}
			\pullback \Uloc(R_3) \ar[r]\ar[d] & \Uloc(R_0)\ar[d]\\
			\Uloc(R_2)\ar[r] & \Uloc(R_0\odot_{R_3}^{R_1}R_2)
		\end{tikzcd}
	\end{center}
	
	where the underlying spectrum of $R_0\odot_{R_3}^{R_1}R_2$ is $R_0\otimes_{R_3}R_2$, which is connective by \Cref{lem:tensorconnective}. Moreover one has a corresponding pullback square for the $S_i$. The maps $R_i \to S_i$ are $i$-connective for $i\geq 1$, so they induce an equivalence on $E$, and also the map on pullbacks is $(i-1)$-connective. The map $R_0\otimes_{R_3}R_2 \to S_0\otimes_{R_3}S_2$ is $i$-connective by \Cref{lem:tensorconnective}, and the map $S_0\otimes_{R_3}S_2\to S_0\otimes_{S_3}S_2$ is $i$-connective by applying both \Cref{lem:tensorconnective} and \Cref{lem:tensorbase} so the composite, which on underlying spectra agrees with $R_0\odot_{R_3}^{R_1}R_2 \to S_0\odot_{S_3}^{S_1}S_2$ is $i$-connective. It thus induces an equivalence on $E$ and a $(i+1)$-connective map on $\TC$.
	
	From the pullback square above, we then learn that $E(R_3) \to E(S_3)$ is also an equivalence, and that $\TC(R_3) \to \TC(S_3)$ is $n$-connective.
\end{proof}

We now apply \Cref{thm:dgmfixedpoints} to $j_{\zeta}$:

\begin{corollary}\label{cor:jdgm}
	Let $R = ko_2$ for $p=2$ and $\ell_p$ for $p>2$.
	There are pullback squares
	\begin{center}
		\begin{tikzcd}
			\pullback K(R^{h\ZZ}) \ar[r]\ar[d] & \TC(R^{h\ZZ})\ar[d]\\
			K(\ZZ_p^{h\ZZ})\ar[r] & \TC(\ZZ_p^{h\ZZ})
		\end{tikzcd}
	\end{center}
\begin{center}
	\begin{tikzcd}
			\pullback K(ko_2^{h\ZZ}) \ar[r]\ar[d] & \TC(ko_2^{h\ZZ})\ar[d] \\
K(\tau_{\leq 2}\SP_2^{h\ZZ})\ar[r] & \TC(\tau_{\leq2}\SP_2^{h\ZZ})
	\end{tikzcd}
\end{center}
where for $R$, the vertical maps are $(2p-2)$-connective for $p>2$ and $1$-connective for $p=2$, and for the second pullback square for $ko_2$, the vertical maps are $4$-connective. The vertical fibres are $p$-nil.
\end{corollary}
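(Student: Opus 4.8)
The plan is to derive both squares from \Cref{thm:dgmpullback}, applied to the fibre of the cyclotomic trace, which is truncating by Dundas--Goodwillie--McCarthy. A square of the displayed shape is a pullback exactly when its two horizontal fibres agree, and these horizontal fibres are $\fib(\mathrm{tr})$ evaluated on the top row and on the bottom row. For the first square I would feed \Cref{thm:dgmpullback} the map of cospans from $R\to R\times R\leftarrow R$ (the diagonal and the $\Psi^{1+p}$-twisted diagonal) to $\tau_{\le 0}R\to\tau_{\le 0}R\times\tau_{\le 0}R\leftarrow\tau_{\le 0}R$, whose pullbacks are $R^{h\ZZ}$ and $(\tau_{\le 0}R)^{h\ZZ}=\ZZ_p^{h\ZZ}$. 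This cospan map is levelwise $(2p-2)$-connective when $R=\ell_p$ (fibre $\tau_{\ge 2p-2}\ell_p$, since $\pi_i\ell_p=0$ for $0<i<2p-2$) and levelwise $1$-connective when $R=ko_2$ (fibre $\tau_{\ge 1}ko_2$), so \Cref{thm:dgmpullback} produces the pullback square and the asserted connectivity of the map on $\TC$; the map on $K$ has the same fibre since the square is cartesian.

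For the second square the one new ingredient is the identification of $\tau_{\le 2}ko_2$ with $\tau_{\le 2}\SP_2$. The unit $\SP_2\to ko_2$ is an isomorphism on $\pi_i$ for $i\le 2$ (both sides being $\ZZ_2$, $\ZZ/2\langle\eta\rangle$, $\ZZ/2\langle\eta^2\rangle$), hence induces an equivalence $\tau_{\le 2}\SP_2\xrightarrow{\ \sim\ }\tau_{\le 2}ko_2$ of $\EE_\infty$-rings, and since $\Psi^3$ fixes the unit map the $\ZZ$-action on $\tau_{\le 2}ko_2$ is carried by this equivalence to the trivial action, so $(\tau_{\le 2}ko_2)^{h\ZZ}\simeq\tau_{\le 2}\SP_2^{h\ZZ}$. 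I would then apply \Cref{thm:dgmpullback} to the cospan map induced by $ko_2\to\tau_{\le 2}ko_2$, whose fibre is $\tau_{\ge 3}ko_2=\tau_{\ge 4}ko_2$ (as $\pi_3ko_2=0$) and is therefore $4$-connective; this yields the square and the $4$-connectivity.

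It remains to see that the vertical fibres are $p$-nil, i.e.\ vanish after inverting $p$. First I would check that $R^{h\ZZ}\to\ZZ_p^{h\ZZ}$, and likewise $ko_2^{h\ZZ}\to\tau_{\le 2}\SP_2^{h\ZZ}$, becomes an equivalence after inverting $p$: its fibre is $(\tau_{\ge 1}R)^{h\ZZ}$, respectively $(\tau_{\ge 4}ko_2)^{h\ZZ}$, and since homotopy $\ZZ$-fixed points are a finite limit and hence commute with inverting $p$, it suffices that $1-\Psi^{1+p}$, respectively $1-\Psi^3$, acts invertibly after inverting $p$ on every homotopy group in the relevant range; on a free summand in degree $2m(p-1)$ it is multiplication by $1-(1+p)^{m(p-1)}$, and on $\pi_{4k}$ of $ko_2$ by $1-3^{2k}$, a nonzero element of $\ZZ_p$ and hence a unit after inverting $p$, while the torsion summands vanish after inverting $p$. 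Since $\TC$ is a localizing invariant, the Land--Tamme squares used in the proof of \Cref{thm:dgmpullback} stay cartesian after inverting $p$; applying \Cref{prop:tclocalization} to the maps $R\to\tau_{\le 0}R$ and $ko_2\to\tau_{\le 2}ko_2$ and to the induced maps of $\odot$-constructions — all maps of connective rings that are isomorphisms on $\pi_0$ (equal to $\ZZ_p$ throughout) — identifies the $p$-inverted vertical fibre with the relative $\TC$ of the corresponding pair of $p$-inverted rings, which vanishes by the equivalence just established. The step I expect to be the main obstacle is this last reassembly: one must check that inverting $p$ carries the Land--Tamme $\odot$-construction for the original cospan to the one for the localized cospan, so that \Cref{prop:tclocalization} can be applied cospan-wise; granting that, the remaining connectivity bookkeeping and the Adams-operation calculations are routine.
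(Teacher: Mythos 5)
Your proposal is correct and follows essentially the same route as the paper: obtain both pullback squares by applying \Cref{thm:dgmpullback} to the truncating invariant $\fib(K\to\TC)$ on the cospan map $(R\to R\times R\leftarrow R)\to(\tau_{\le 0}R\to\tau_{\le 0}R\times\tau_{\le 0}R\leftarrow\tau_{\le 0}R)$ (resp.\ with target $\tau_{\le 2}ko_2$), reading off connectivity from the levelwise connectivity of this cospan map, and then deduce the $p$-nil claim by passing through the Land--Tamme squares from the proof of \Cref{thm:dgmpullback} together with \Cref{prop:tclocalization}. The one step you flag as a possible obstacle — that inverting $p$ carries the $\odot$-construction for the original cospan to that for the $p$-inverted cospan — is resolved exactly as the paper does: the underlying spectrum of $R_0\odot_{R_3}^{R_1}R_2$ is the relative tensor product $R_0\otimes_{R_3}R_2$, and both tensor products and $(-)^{h\ZZ}$ (a finite limit) commute with inverting $p$, so the two localized $\odot$-rings agree and \Cref{prop:tclocalization} applies term by term.
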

\begin{proof}
	The pullback squares follow immediately from \Cref{thm:dgmfixedpoints} and the fact that $\tau_{\leq3}ko_2 = \tau_{\leq 2}\SP_2$, $\tau_{\leq2p-3}\ell_2 = \ZZ_p$ which implies that the actions on those truncations are trivial.
	
	We now show that the vertical fibres are $p$-nil for the first square, as the proof for the second square is identical. Consider the square
	
	\begin{center}
		\begin{tikzcd}
			\TC(R^{h\ZZ})[\frac 1 p]\ar[r]\ar[d] & \TC(R^{h\ZZ}[\frac 1 p])\ar[d]\\
			\TC(\ZZ_p^{h\ZZ})[\frac 1 p]\ar[r] & \TC(\ZZ_p^{h\ZZ}[\frac 1 p])
		\end{tikzcd}
	\end{center}

	It will suffice to show this square is cartesian, since the map $R^{h\ZZ}[\frac 1 p] \to \ZZ_p[\frac 1 p]$ is an equivalence. By the proof of \Cref{thm:dgmpullback}, we have pullback squares
	
	\begin{center}
		\begin{tikzcd}
		 	{\pullback\TC(R^{h\ZZ})[\frac 1 p]} \ar[r]\ar[d] &\TC(R)[\frac 1 p]\ar[d]&	{\pullback\TC(R^{h\ZZ}[\frac 1 p]}) \ar[r]\ar[d] &\TC(R[\frac 1 p])\ar[d]\\
			\ar[r] \TC(R)[\frac 1 p] &\TC(R\odot_{\ell_p^{h\ZZ}}^{R\times\ell_p}R)[\frac 1 p] & {}	\ar[r] \TC(R[\frac 1 p] )&\TC(R[\frac 1 p]\odot_{R[\frac 1 p]^{h\ZZ}}^{R[\frac 1 p]\times R[\frac 1 p]}R[\frac 1 p])
		\end{tikzcd}
	\end{center}

	and similarly with $\ZZ_p$ replacing $R$. $R[\frac 1 p]\odot_{R[\frac 1 p]^{h\ZZ}}^{R[\frac 1 p]\times R[\frac 1 p]}R[\frac 1 p]$ agrees with $(R\odot_{R^{h\ZZ}}^{R\times R}R)[\frac1 p]$ because the underlying spectrum is a tensor product, and tensor products commute with inverting $p$. Thus we learn from these pullback squares that to show the pullback square we want is cartesian, it suffices to prove this with the pair $\ZZ_p^{h\ZZ},R^{h\ZZ}$ replaced by $\ZZ_p,R$ and $(\ZZ_p\odot_{\ZZ_p^{h\ZZ}}^{\ZZ_p\times\ZZ_p}\ZZ_p),(R\odot_{R^{h\ZZ}}^{R\times R}R)$. But these are pairs of connective rings with the same $\pi_0$, so the result follows now from \Cref{prop:tclocalization}.
\end{proof}

We now prove a variant of \Cref{thm:dgmpullback}, where the ring $R$ in question is $-1$-connective, but doesn't have to come from a pullback square. The idea is the same as before: to resolve $\Mod(R)$ by module categories of connective rings, only this time instead of the resolution coming to us from a pullback square, we construct one by hand. The result below is a refinement of \Cref{thm:dgm-1conn}.

\begin{theorem}\label{thm:dgm-1connstrong}
	Let $R \to S$ be an $1$-connective map of $-1$-connective rings such that $\pi_{-1}R$ is a finitely generated $\pi_0R$-module. Then for any truncating invariant $E$, $E(R) \to E(S)$ is an equivalence. Moreover, if $f$ is $n$-connective, then $\TC(R) \to \TC(S)$ is $(n-1)$-connective.
\end{theorem}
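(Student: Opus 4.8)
\textbf{Proof proposal for Theorem \ref{thm:dgm-1connstrong}.}

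The plan is to mimic the strategy of \Cref{thm:dgmpullback}: resolve $\Mod(R)$ by module categories over connective rings and apply Land--Tamme. The key difference is that no pullback square is handed to us, so I would build one by hand using the hypothesis that $\pi_{-1}R$ is finitely generated over $\pi_0 R$. Concretely, since $\pi_{-1}R$ is finitely generated, I can choose finitely many classes in $\pi_{-1}R$ and assemble them into a map of right $R$-modules $\Sigma^{-1}R^{\oplus m} \to R$; let $R' := \tau_{\geq 0}(\cof) $ of a suitable such map — more carefully, I want a connective $\EE_1$-ring $R'$ together with a map $R \to R'$ that is $1$-connective (so an iso on $\pi_0$) and kills $\pi_{-1}$. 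The cleanest way: take $R' = \tau_{\geq 0}R$, which is automatically an $\EE_1$-ring with a map $R \to R'$, and which is $1$-connective since $R$ is $-1$-connective. Then I would form the pullback square of $\EE_1$-rings
\begin{center}
	\begin{tikzcd}
		R \ar[r]\ar[d] & R' \ar[d] \\
		R' \ar[r] & R'\oplus \Sigma\pi_{-1}R
	\end{tikzcd}
\end{center}
where the bottom right is the trivial square-zero extension by the shifted $\pi_0R$-module $\pi_{-1}R$, regarded as a connective ring, and the two maps $R' \to R'\oplus \Sigma\pi_{-1}R$ are the unit and the map classified by the $k$-invariant of the Postnikov-type truncation $\tau_{\geq -1}R \to R'$. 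This exhibits $R$ (really $\tau_{\geq -1}R$; I also need that $R \to \tau_{\geq -1}R$ is handled, but $\pi_{<-1}R = 0$ so $R = \tau_{\geq -1}R$) as a pullback of connective rings along a $1$-connective map; finite generation of $\pi_{-1}R$ guarantees $R'\oplus\Sigma\pi_{-1}R$ is a perfectly reasonable connective $\EE_1$-ring (in particular a compact generation / dualizability point is not an issue).

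Next I would run the same argument as in \Cref{thm:dgmpullback}, but now comparing the pullback square for $R$ with that for $S$. For $S$, since $S$ is $-1$-connective and $R \to S$ is $1$-connective, $\pi_0 S = \pi_0 R$ and $\pi_{-1}S$ is a quotient of $\pi_{-1}R$ (by the long exact sequence), hence also finitely generated over $\pi_0 S = \pi_0 R$; so the same construction produces a pullback square for $S$ with corner $S'\oplus\Sigma\pi_{-1}S$ where $S' = \tau_{\geq0}S$. Applying \cite{Land_2019} to each square gives, for any localizing invariant, pullback squares with the double-corner replaced by $R'\odot_R^{R'}(R'\oplus\Sigma\pi_{-1}R)$ resp.\ $S'\odot_S^{S'}(S'\oplus\Sigma\pi_{-1}S)$, whose underlying spectra are $R'\otimes_R(R'\oplus\Sigma\pi_{-1}R)$ resp.\ $S'\otimes_S(S'\oplus\Sigma\pi_{-1}S)$. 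These are connective by \Cref{lem:tensorconnective}, and the comparison map between them is $1$-connective by \Cref{lem:tensorconnective} and \Cref{lem:tensorbase} exactly as in the proof of \Cref{thm:dgmpullback}. Since $E$ is truncating, $E$ of each connective corner ring depends only on $\pi_0$, and $\pi_0$ of both double-corners agrees (both are $\pi_0 R$, as the extra tensor factors are built from positively connected things), so $E(R'\odot\cdots) \to E(S'\odot\cdots)$ is an equivalence; likewise $E(R') \to E(S')$ is an equivalence since $\pi_0 R' = \pi_0 S'$ and $E$ is truncating. From the two Land--Tamme pullback squares and the five lemma for $E$, $E(R) \to E(S)$ is an equivalence. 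For the $\TC$ statement, I replace "truncating" by the connectivity bookkeeping of \Cref{lem:walderrorterm} and \Cref{prop:tclocalization}: if $R \to S$ is $n$-connective then $R' \to S'$ is $n$-connective, $\pi_{-1}R \to \pi_{-1}S$ is an iso so $R'\oplus\Sigma\pi_{-1}R \to S'\oplus\Sigma\pi_{-1}S$ is $n$-connective, the double-corner map is $n$-connective, and chasing the pullback squares yields that $\TC(R) \to \TC(S)$ is $(n-1)$-connective.

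The main obstacle I anticipate is making the by-hand pullback square genuinely one of \emph{$\EE_1$-rings} and checking it really is a pullback on underlying spectra: the map $\tau_{\geq -1}R \to \tau_{\geq 0}R \oplus \Sigma\pi_{-1}R$ classifying the $k$-invariant needs to be realized as an $\EE_1$-ring map (the trivial square-zero extension $R'\oplus\Sigma M$ is an $\EE_1$-ring for any $R'$-bimodule $M$, and $\pi_{-1}R$ is a $\pi_0R$-bimodule; the subtlety is the nontrivial map, which one gets from the fact that $\tau_{\geq -1}R$ is, as an $\EE_1$-$R'$-algebra, a square-zero extension of $R'$ by $\Sigma^{-1}\cdot(\Sigma\pi_{-1}R)$ — this is the standard Postnikov/square-zero decomposition, e.g.\ via the cotangent complex, but it should be cited rather than reproved). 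One also has to double-check that inverting $p$ behaves well, as in \Cref{cor:jdgm}, if one wants the sharpened statement $F[\tfrac1p]=0$ — but for the bare statement of \Cref{thm:dgm-1connstrong} this is not needed. Everything else is a direct transcription of the proof of \Cref{thm:dgmpullback}.
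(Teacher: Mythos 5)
Your proposal takes a genuinely different route from the paper, but there is a fundamental error that makes it fail at the very first step: there is no $\EE_1$-ring map $R \to \tau_{\geq 0}R$. The connective cover $\tau_{\geq 0}R$ is defined by a map \emph{into} $R$, namely $\tau_{\geq 0}R \to R$, which is an isomorphism on $\pi_{\geq 0}$ and kills $\pi_{-1}$ on the source; there is no map the other way for a spectrum that is not already connective. Consequently the proposed pullback square with $R$ in the corner and $R' = \tau_{\geq 0}R$ as the upper-right and lower-left vertices does not exist --- the two arrows out of $R$ simply have nothing to map to. Contrast this with the $R^{h\ZZ}$ case underlying \Cref{thm:dgmpullback}: there one genuinely has a map $R^{h\ZZ} \to R$ (projection from the homotopy fixed points) together with the diagonal and twisted diagonal $R \to R \times R$, so the $-1$-connective ring sits at the \emph{top-left} of a pullback cospan of connective rings. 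For a general $-1$-connective $R$ there is no natural connective ring that $R$ maps to, and hence no analogous cospan to feed into Land--Tamme.

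This is precisely the obstruction the paper's proof is engineered to circumvent: rather than trying to produce a ring map out of $R$, it constructs a connective $R$-\emph{module} $X$ (an infinite cell complex indexed by the free monoid on the finitely many generators of $\pi_{-1}R$) with $\End_R(X)$ connective, shows that $R$ lies in the thick subcategory generated by $X$ (via the filtered self-maps $\sigma x_i$), and then uses Morita theory plus the localization sequence
$\Mod(R)^{\omega} \to \Mod(\End(X))^{\omega} \to \Mod(\End_{\langle X\rangle/\langle R\rangle}(X))^{\omega}$
to sandwich $\Mod(R)^{\omega}$ between module categories of connective rings. The finiteness hypothesis on $\pi_{-1}R$ is what lets $X$ be built from finite-rank free modules at each stage. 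You should also note two secondary issues with your write-up, though they are now moot: (i) the degree convention is off by one (the trivial square-zero corner ought to be $R' \oplus M$ with $M$ in degree $0$, since $\Sigma$ applied to the degree-$(-1)$ fibre lands in degree $0$, not $1$), and (ii) the claim that the connective-cover map is a square-zero extension of $\EE_1$-rings is proved in Lurie only for truncations $\tau_{\leq n}A \to \tau_{\leq n-1}A$ of \emph{connective} $A$ and is not an automatic consequence for $-1$-connective rings --- but again, since the map itself does not exist here, this never arises.
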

\begin{proof}
	Choose generators $x_1,\dots,x_n \in \pi_{-1}R$ as a $\pi_0R$-module. We will build an $R$-module $X$ whose cells correspond to the free monoid on the set $\{x_1,\dots,x_n\}$ such that its endomorphism ring is connective. We will then embed $\Mod(R)$ fully faithfully into $\Mod(\End(X))$, and show that the cofibre is also the module category of a connective ring. Doing the same for $S$, and comparing, we will obtain the result.
	
	To construct $X$, set $X_0=R$, and choose a free module of rank $n$ in degree $-1$ to hit the generators $x_i$ of $X$ in degree $-1$, and let $X_1$ be the cofibre. $\pi_{-1}X_1$ is canonically identified with $n$ copies of $\pi_{-1}X_0$ indexed on the $x_i$s. We can then repeat this process, constructing $X_i$ as the cofibre of a free module of rank $n^{i}$ hitting the generators of $X_{i-1}$, which are indexed on words of length $i$ in the $x_i$. Let $X = \colim_i X_i$. Note that $X$ is connective because the map $X_i \to X_{i+1}$ is zero on negative homotopy groups by construction. We claim that $\End(X) = \lim_i(\map(X_i,X))$ is connective. More generally, we will show that $\map(X,Y) = \lim_i\map(X_i,Y)$ is connective for any $Y$ that is connective as a spectrum. The individual terms $\map(X_i,Y)$ are connective because $Y$ is connective as a spectrum and $X_i$ are built out of finitely many cells of degree $0$. Thus it suffices to show that any map $X_i \to Y$ can be extended to $X_{i+1}$, so that the $\lim^1$-term that could potentially contribute to $\pi_{-1}(\map(X,Y))$ vanishes. But this follows since the obstructions to making an extension lives in $\pi_{-1}Y$, which vanishes.
	
	We now show that the thick subcategory generated by $X$ contains $R$. The $X_i$ filtration makes $X$ into a filtered $R$-module, with associated graded a free module on the free monoid generated by the $x_i$. We will construct a filtered self map $\sigma x_i:X \to X$ such that on the associated graded, $x_i$ is left multiplication by $x_i$\footnote{As pointed out to me by Robert Burklund, there is a universal example, the trivial square zero extension $\SP \oplus \bigoplus_1^n \Sigma^{-1}\SP$, which one can show by obstruction theory admits an $\EE_1$-map to $R$ for any $-1$-connective $R$ sending the classes in degree $-1$ to the classes $x_1,\dots,x_n$. This gives an alternate way to construct the module $X$ and self maps $\sigma x_i$ via basechange from the universal example.}. The obstruction to extending a filtered map defined until $X_{k-1}$ to $X_{k}$ at the $k$th step in the filtration is the map $\theta$ in the diagram below:
	
	\begin{center}
		\begin{tikzcd}
			& \Sigma^{-1}R^{n^k} \ar[d]\ar[dddr,"\theta"]&\\
			\cdots\ar[r] &X_{k-1} \ar[r]\ar[d] &X_k \ar[d,dashed] \ar[r] & \cdots\\
			\cdots\ar[r] &X_k \ar[r] &X_{k+1}\ar[d]\ar[r] &\cdots\\
			& & R^{n^{k+1}} &
		\end{tikzcd}
	\end{center}
	$\theta$ has to be null since the map $X_k \to X_{k+1}$ is $0$ on $\pi_{-1}$ by construction. Thus we can produce the dashed arrow in the diagram. Since $\pi_0$ of the space of nulhomotopies for an individual component $\Sigma^{-1}R$ is exactly $\pi_0R^{n^{k+1}}$, we can choose the nulhomotopy so that the map on the associated graded is as desired. 
	
	Now note that the cofibre of $X$ by all of the $\sigma x_i$s is just $R$, so that $R$ is in the thick subcategory generated by $X$. Let $\langle X\rangle$ be the thick subcategory of $\Mod(R)$ generated by $X$, and let $\langle X \rangle/\langle R \rangle$ denote the localization of $\langle X\rangle$ away from the thick subcategory generated by $R$. Morita theory gives an equivalence $\langle X \rangle = \Mod(\End(X))^{\omega}$, and we thus have a localization sequence
	 $$\Mod(R)^{\omega} \to \Mod(\End(X))^{\omega} \to\Mod(\End_{\langle X\rangle /\langle R\rangle}(X))^{\omega}$$ 
	 
	 We claim that $\End_{\langle X\rangle /\langle R\rangle}X$ is also connective. To understand this endomorphism ring, we observe that the $X_i$ are cofinal among perfect $R$-modules mapping to $X$. This means that $\End_{\langle X\rangle /\langle R\rangle}(X)$ is computed as $\colim_i \map(X,X/X_i)$, so it suffices to show $\map(X,X/X_i)$ is connective. But $X/X_i$ is connective as a spectrum, and as we have shown, $\map(X,Y)$ is connective whenever $Y$ is connective as a spectrum.
	
	Finally, we note that $X\otimes_RS$ has exactly the same properties as an $S$-module, and in fact $X\to X\otimes_RS$ is $i$-connective by \Cref{lem:tensorconnective}. However due to the possibility of $\lim^1$, this only guarantees that $\End(X) \to \End(X\otimes_RS)$ as well as the maps of endomorphisms in the cofibres are $(i-1)$-connective. Nevertheless, the contribution of the $\lim^1$-term to $\pi_0$ is square zero, so since truncating invariants are nil-invariant \cite[Theorem B]{Land_2019}, we learn that $E(\End(X)) \to E(\End(X\otimes_RS))$ is an equivalence, and similarly for the localized ring. By the localization sequence, the $E(R) \to E(S)$ is also an equivalence. Since $\End(X)\to \End(X\otimes_RS)$ is $(i-1)$-connective, it induces an $i$-connective map on $\TC$, so via the localization sequence, the original map $R \to S$ induces an $(i-1)$-connective map on $\TC$.
\end{proof}

\begin{remark}
	The connectivity bound for $\TC$ may not be optimal in the above theorem, and the finiteness hypothesis might not be necessary. Also, if one of the $x_i$ is chosen to be zero, $\End(X)$ vanishes on every additive invariant by the Eilenberg swindle. This means that we have really proven that the suspension of $\Uloc(R)$ is $\Uloc$ of a connective ring.
\end{remark}

\begin{question}
	To what extent do \Cref{thm:dgmpullback} and \Cref{thm:dgm-1connstrong} generalize?
\end{question}

For instance, can \Cref{thm:dgmpullback} be generalized to other finite limits of sufficiently connective ring maps? The results proven here are certainly not the most general: for example the methods of this section are capable of proving that for a sufficiently connective map of rings $R \to S$ with a $\ZZ^n$-action that is trivial in low degrees, $E(R^{h\ZZ^n}) \to E(S^{h\ZZ^n})$ is an equivalence for any truncating invariant $E$.

\section{The main theorems}

We now put together the results so far to prove the main theorems \ref{thm:K1localsphere} and \ref{thm:type2}, as well as \Cref{thm:compactk1local}, which are stated where they are proven for convenience.

	\begin{customthm}{A}
		$K(L_1^f \SP_p) \cong K(L_{K(1)}\SP)$, there is a cofibre sequence split on $\pi_*$
\begin{center}
	\begin{tikzcd}
		K(j_{\zeta}) \ar[r] & K(L_{K(1)}\SP) \ar[r] & \Sigma K(\FF_p)
	\end{tikzcd}
\end{center}
and a pullback square
\begin{center}
	\begin{tikzcd}
		\pullback K(j_{\zeta}) \ar[r] \ar[d] & \TC(j_{\zeta}) \ar[d]\\
		K(\ZZ_p) \ar[r]  & \TC(\ZZ_p^{h\ZZ})
	\end{tikzcd}
\end{center}
Let $F$ be the fibre of the map $\TC(j_{\zeta}) \to \TC(\ZZ_p^{h\ZZ})$. Then $F[\frac 1 p] = 0$. For $p>2$, $F$ is $(2p-2)$-connective and $\pi_{2p-2}(F/p) \cong \bigoplus_0^{\infty}\FF_p$. For $p=2$, $F$ is $1$-connective and $\pi_1F\cong \bigoplus_0^{\infty}\FF_2$.
\end{customthm}

\begin{proof}
	The first statement is just \Cref{prop:L1LK1}, and the cofibre sequence is \Cref{prop:K1localsequence}. We now show that the cofibre sequence is split on $\pi_*$. After inverting $p$, $K(j_{\zeta}) \to K(\ZZ_p^{h\ZZ})$ is an equivalence by \Cref{cor:jdgm}, so the map is null by \Cref{lem:nullona1invkthy}. Thus the cofibre sequence splits on $\pi_*$ after inverting $p$. Because $K_*(\FF_p) = K_*(\FF_p)[\frac 1 p]$ in positive degrees and is torsion, we obtain the desired result in degrees $\neq 1$. On $\pi_1$, one observes that $j_{\zeta}/(p,v_1)$ and $j_{\zeta}/(2,\eta,v_1)$ are zero in $K_0(j_{\zeta})$, so that the cofibre sequence is a short exact sequence on $\pi_1$, so it splits since $K_0(\FF_p)=\ZZ$ is projective.
	
	\Cref{cor:jdgm} gives a pullback square
	
	\begin{center}
		\begin{tikzcd}
			\pullback K(j_{\zeta}) \ar[r] \ar[d] & \TC(j_{\zeta}) \ar[d]\\
			K(\ZZ_p^{h\ZZ}) \ar[r]  & \TC(\ZZ_p^{h\ZZ})
		\end{tikzcd}
	\end{center}
	and gives the connectivity claims for $F$. By applying \Cref{thm:devissage} to $\ZZ_p^{h\ZZ}$, we learn that $K(\ZZ_p) \cong K(\ZZ_p^{h\ZZ_p})$, so that this pullback square agrees with the one in the theorem statement.
	
	It remains to prove the claims about the first nonvanishing homotopy group of $F$.
	
	To compute $\pi_1$ of the vertical fibre for $p=2$, we first observe from the second pullback square in \Cref{cor:jdgm} that it s the same as $\pi_1$ of the fibre of $K(\tau_{\leq 2}\SP^{h\ZZ}) \to K(\ZZ_2^{h\ZZ})$, where the action on $\tau_{\leq2}\SP$ is (necessarily) trivial. From Land--Tamme (see the proof of \Cref{thm:dgmpullback}), we get a pullback square 
	\begin{center}
		\begin{tikzcd}
		\pullback	\Uloc(\tau_{\leq2}\SP_2^{h\ZZ})\ar[r]\ar[d] &\Uloc(\tau_{\leq2}\SP_2)\ar[d]\\
		\Uloc(\tau_{\leq2}\SP_2)	\ar[r] & \Uloc(\tau_{\leq2}\SP_2\odot_{\tau_{\leq2}\SP_2^{h\ZZ}}^{\tau_{\leq2}\SP_2\times \tau_{\leq2}\SP_2}\tau_{\leq2}\SP_2)
		\end{tikzcd}
	\end{center}
	We claim that there is an equivalence of $\EE_1$-algebras $\tau_{\leq2}\SP_2\odot_{\tau_{\leq2}\SP_2^{h\ZZ}}^{\tau_{\leq2}\SP_2\times \tau_{\leq2}\SP_2}\tau_{\leq2}\SP_2 \cong \tau_{\leq2}\SP_2[z]$. To see this, we first apply the formula in \cite{LTformula2021} (see \cite[Example 4.9]{kcoconn}), which gives an equivalence of $\EE_1$-algebras $\tau_{\leq2}\SP_2\odot_{\tau_{\leq2}\SP_2[\epsilon_{-1}]}^{\tau_{\leq2}\SP_2[\epsilon_{0}]}\tau_{\leq2}\SP_2 \cong \tau_{\leq2}\SP_2[z]$. It suffices then to show that 
	
	$$\tau_{\leq2}\SP_2\odot_{\tau_{\leq2}\SP_2[\epsilon_{-1}]}^{\tau_{\leq2}\SP_2[\epsilon_{0}]}\tau_{\leq2}\SP_2 \cong \tau_{\leq2}\SP_2\odot_{\tau_{\leq2}\SP_2^{h\ZZ}}^{\tau_{\leq2}\SP_2\times \tau_{\leq2}\SP_2}\tau_{\leq2}\SP_2$$
	To do this, we use the observation in \cite{LTformula2021} (see also \cite[Section 4]{kcoconn}) that $R_1\odot_{R_2}^{R_4}R_3$ just depends on the span $R_1, R_3$ and the unital $R_1-R_3$-bimodule $R_4$. Thus it suffices to show that $\tau_{\leq 2}\SP[\epsilon_0]$ and $\tau_{\leq2}\SP\times \tau_{\leq2}\SP$ define isomorphic unital $\tau_{\leq2}\SP-\tau_{\leq2}\SP$-bimodules. But indeed, they are both symmetric bimodules, and $\tau_{\leq2}\SP[\epsilon_0]$ is a free $\EE_0$-$\tau_{\leq2}\SP$-algebra on $\epsilon_0$, so by sending $\epsilon_0$ to $(1,0) \in \pi_0 (\tau_{\leq2}\SP\times \tau_{\leq2}\SP)$ we get an isomorphism.
	
	The same argument for $\ZZ_2$ instead of $\tau_{\leq2}\SP_2$ shows that $\ZZ_2\odot_{\ZZ_2^{h\ZZ}}^{\ZZ_2\times \ZZ_2}\ZZ_2 = \ZZ_2[z]$. From \Cref{lem:walderrorterm}, we then learn that $K_2(\tau_{\leq2}\SP_2[z],\ZZ_2[z]) \cong \mathrm{HH}_0(\ZZ_2[z];\ZZ/2[z]) \cong \ZZ/2[z]$. By comparing the Land--Tamme pullback squares for $\tau_{\leq2}\SP^{h\ZZ}$ and $\ZZ_2^{h\ZZ}$, since $K_2(\tau_{\leq2}\SP_2,\ZZ_2) = \ZZ/2$, this shows that $K_1(\tau_{\leq2}\SP_2^{h\ZZ},\ZZ_2^{h\ZZ})$ is infinitely many copies of $\ZZ/2$.
	
	At odd primes, as before, it suffices to show that $\pi_{2p-1}K(\ell_p\odot_{\ell_p^{h\ZZ}}^{\ell_p\times\ell_p}\ell_p,\ZZ_p[z])/p$ is countably generated. To do this, we will first study it's underlying spectrum, which is a tensor product. We claim that the natural map $j_{\zeta}\otimes_{j_{\zeta}^{h\ZZ}}\ell^{h\ZZ} \to \ell_p$ is a $p$-completion, where we consider $j_{\zeta} \to \ell$ as a $\ZZ$-equivariant map with a trivial action on $j_{\zeta}$. To see this, we consider the commutative diagram below, where the horizontal arrows are given by $1$ minus the action of $1 \in \ZZ$.
	
	\begin{center}
		\begin{tikzcd}
			j_{\zeta}\ar[r]\ar[d] &j_{\zeta} \ar[r]\ar[d] & j_{\zeta}\ar[d]\ar[r] & \cdots \\
			\ell_p\ar[r] &\ell_p \ar[r] &\ell_p\ar[r] & \cdots
		\end{tikzcd}
	\end{center}
	The horizontal maps are ${j_{\zeta}}^{h\ZZ}$ and $\ell_p^{h\ZZ}$ module maps respectively. Since the action of $1 \in \ZZ$ on $\pi_*\ell_p$ is $1$ mod $p$, the horizontal maps are zero on $\pi_*$ mod $p$, so the colimit along the horizontal maps are zero $p$-adically. Moreover the fibres of each horizontal map is ${j_{\zeta}}^{h\ZZ}$ and $\ell_p^{h\ZZ}$ respectively. Thus we have produced a filtration of $j_{\zeta}$ as a ${j_{\zeta}}^{h\ZZ}$-module that basechanges $p$-adically to a filtration of $\ell_p$ as a $\ell_p^{h\ZZ}$ module, proving the claim.
	
	As a consequence, we obtain that the map $\ell_p\otimes_{{j_{\zeta}}^{h\ZZ}}j_{\zeta} \to \ell_p\otimes_{\ell_p^{h\ZZ}}\ell_p$ is an equivalence after $p$-completion. We now filter $j_{\zeta}$ via the homotopy fixed point filtration. Namely, $(\tau_{\geq*}\ell_p)^{h\ZZ_p}$ gives a filtration on $j_{\zeta}$. The map $j_{\zeta} \to \ell_p$ is also a filtered map, where $\ell_p$ is given the Postnikov filtration.

	In what follows, $C^*(\ZZ_p;R) = \colim_i(C^*(\ZZ/p^i;R))$ for $R$ an $\EE_{\infty}$-algebra denotes the algebra of continuous cochains on $\ZZ_p$ with coefficients in $R$. Up to $p$-completion, it is also given by the tensor product $R\otimes_{R^{h\ZZ}}R$, where $R$ has the trivial $\ZZ$-action.
	Taking the tensor product $j_{\zeta}/p\otimes_{{j_{\zeta}}^{h\ZZ}}j_{\zeta}$ in filtered rings gives a spectral sequence converging to the homotopy of the tensor product. We know that the tensor product is $(j_{\zeta}\otimes_{{j_{\zeta}}^{h\ZZ}}j_{\zeta})/p \cong C^*(\ZZ_p;j_{\zeta}/p)$. The associated graded of $j_{\zeta}$ is a $\ZZ$-algebra since it is the homotopy fixed points of the associated graded of $\ell_p$, which has the Postnikov filtration. We thus have isomorphisms $$\gr j_{\zeta}/p\otimes_{{\gr j_{\zeta}}^{h\ZZ}}\gr j_{\zeta}\cong \gr j_{\zeta}/p\otimes_{{\gr j_{\zeta}/p}^{h\ZZ}}\gr j_{\zeta}/p \cong C^*(\ZZ_p;\gr j_{\zeta}/p)$$ Thus we learn that the spectral sequence for $(j_{\zeta}\otimes_{{j_{\zeta}}^{h\ZZ}}j_{\zeta})/p$ degenerates. $\pi_* \gr j_{\zeta}/p = \FF_p[\zeta,v_1]$ since the Adams operations act trivially on $\pi_*\ell_p$ mod $p$, so the associated graded of the homotopy ring of this tensor product is $\FF_p[\zeta,v_1]\otimes C^*(\ZZ_p;\FF_p)$.
	
	The spectral sequence for $j_{\zeta}/p\otimes_{{j_{\zeta}}^{h\ZZ}}j_{\zeta}$ maps to the one coming from the tensor product of filtered rings $\ell_p/p\otimes_{{j_{\zeta}}^{h\ZZ}}j_{\zeta}$. The $E_1$-page of the spectral sequence for $\ell_p/p\otimes_{{j_{\zeta}}^{h\ZZ}}j_{\zeta}$ is the homotopy ring of $\gr \ell_p/p\otimes_{{\gr j_{\zeta}}^{h\ZZ}}\gr j_{\zeta}\cong \gr \ell_p/p\otimes_{{\gr \ell_p/p}^{h\ZZ}}\gr \ell_p/p \cong \gr \ell_p/p\otimes_{\FF_p} C^*(\ZZ_p;\FF_p)$. Thus the $E_1$-page is $\FF_p[v_1]\otimes_{\FF_p} C^*(\ZZ_p;\FF_p)$, so the map of spectral sequences is surjective and thus both spectral sequences degenerate. We also learn that $j_{\zeta}/p\otimes_{{j_{\zeta}}^{h\ZZ}}j_{\zeta} \to \ell_p/p\otimes_{j_{\zeta}^{h\ZZ}}j_{\zeta}$ is an isomorphism on $\pi_*$ in even degrees because at the level of $E_1$-pages it is the map $\FF_p[\zeta,v_1]\otimes_{\FF_p} C^*(\ZZ_p;\FF_p) \to \FF_p[v_1]\otimes_{\FF_p} C^*(\ZZ_p;\FF_p)$
	
	Using the formula for $\odot$ in the case the action is trivial as we did for $p=2$, we obtain an equivalence of $\EE_1$-rings $j_{\zeta}\odot_{j_{\zeta}^{h\ZZ}}^{j_{\zeta}\times j_{\zeta}}j_{\zeta} \cong j_{\zeta}[z]$. Thus we have an $\EE_1$-algebra map $$j_{\zeta}[z] \cong j_{\zeta}\odot_{j_{\zeta}^{h\ZZ}}^{j_{\zeta}\times j_{\zeta}}j_{\zeta} \to \ell_p\odot_{\ell_p^{h\ZZ}}^{\ell_p\times\ell_p}\ell_p$$ Since $\odot$ is the tensor product on underlying spectra, we learn that mod $p$, this map is an isomorphism in even degrees, so we learn that $\pi_*(\ell_p\odot_{\ell_p^{h\ZZ}}^{\ell_p\times\ell_p}\ell_p/p) = \FF_p[v_1,z]$ as a ring. It follows that $\mathrm{HH}_0(\ZZ_p[z];\pi_{2p-2} \ell_p\odot_{\ell_p^{h\ZZ}}^{\ell_p\times\ell_p}\ell_p/p) = \FF_p[z]$ so using \Cref{lem:walderrorterm}, we learn that $\pi_{2p-1}K(\ell_p\odot_{\ell_p^{h\ZZ}}^{\ell_p\times\ell_p}\ell_p,\ZZ_p[z])/p$ is $\FF_p[z]$, which indeed is countably generated.
	
	Using the formula for $\odot$ in the case the action is trivial as we did for $p=2$, we obtain an equivalence of $\EE_1$-rings $j_{\zeta}\odot_{j_{\zeta}^{h\ZZ}}^{j_{\zeta}\times j_{\zeta}}j_{\zeta} \cong j_{\zeta}[z]$. Thus we have an $\EE_1$-algebra map $$j_{\zeta}[z] \cong j_{\zeta}\odot_{j_{\zeta}^{h\ZZ}}^{j_{\zeta}\times j_{\zeta}}j_{\zeta} \to \ell_p\odot_{\ell_p^{h\ZZ}}^{\ell_p\times\ell_p}\ell_p$$ Since $\odot$ is the tensor product on underlying spectra, we learn that mod $p$, this map is an isomorphism in even degrees, so we learn that $\pi_*(\ell_p\odot_{\ell_p^{h\ZZ}}^{\ell_p\times\ell_p}\ell_p/p) = \FF_p[v_1,z]$ as a ring. It follows that $\mathrm{HH}_0(\ZZ_p[z];\pi_{2p-2} \ell_p\odot_{\ell_p^{h\ZZ}}^{\ell_p\times\ell_p}\ell_p/p) = \FF_p[z]$ so using \Cref{lem:walderrorterm}, we learn that $\pi_{2p-1}K(\ell_p\odot_{\ell_p^{h\ZZ}}^{\ell_p\times\ell_p}\ell_p,\ZZ_p[z])/p$ is $\FF_p[z]$, which indeed is countably generated.
\end{proof}

\begin{question}\label{qst:null}
	Is the boundary map $K(\FF_p) \to K(j_{\zeta})$ in \Cref{thm:K1localsphere} null?
\end{question}

We saw in the above proof that the map is null after inverting $p$, so that \Cref{qst:null} is essentially a $p$-adic question.

Next, we give a formula for $K(\Sp^{\omega}_{T(1)})$:

\begin{theorem}\label{thm:compactk1local}
	There is a cofibre sequence split on $\pi_*$
	\begin{center}
		\begin{tikzcd}
			K(j_{\zeta}\otimes \Sp_{\geq1}^{\omega}) \ar[r] & K(\Sp_{T(1)}^{\omega}) \ar[r] & \Sigma K(\FF_p)
		\end{tikzcd}
	\end{center}
	and a pullback square
	\begin{center}
		\begin{tikzcd}
			\pullback K(j_{\zeta}\otimes \Sp_{\geq1}^{\omega}) \ar[r] \ar[d] & \TC(j_{\zeta}) \ar[d]\\
			K(\FF_p) \ar[r]  & \TC(\ZZ_p^{h\ZZ})
		\end{tikzcd}
	\end{center}
\end{theorem}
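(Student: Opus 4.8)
The plan is to obtain all three statements as the ``$p$-torsion counterparts'' of the corresponding statements in \Cref{thm:K1localsphere}, reusing \Cref{prop:K1localsequence}, \Cref{lem:nullona1invkthy} and \Cref{cor:jdgm}.

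For the cofibre sequence, I would tensor the localization sequence $\Sp_{\geq 2}^{\omega}\to\Sp_{\geq 1}^{\omega}\to\Sp_{T(1)}^{\omega}$ — the $n=1$ case of Waldhausen's localization sequences, so that the Verdier quotient $\Sp_{\geq 1}^{\omega}/\Sp_{\geq 2}^{\omega}$ idempotent-completes to the compact objects of the monochromatic layer $M_1^f\Sp\simeq\Sp_{T(1)}$ — with $\Mod(j_{\zeta})^{\omega}$, obtaining a localization sequence of categories $j_{\zeta}\otimes\Sp_{\geq 2}^{\omega}\to j_{\zeta}\otimes\Sp_{\geq 1}^{\omega}\to j_{\zeta}\otimes\Sp_{T(1)}^{\omega}$. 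By the devissage argument of \Cref{prop:K1localsequence} the left category has $K$-theory $K(\FF_p)$ (its generator $j_{\zeta}\otimes Z$ has endomorphism ring $\FF_p^{h\ZZ}$, coconnective with $\pi_0=\FF_p$, so \Cref{thm:devissage} applies), and the right category is $\Sp_{T(1)}^{\omega}$ up to idempotent completion, since $L_{T(1)}j_{\zeta}=L_{K(1)}\SP$ (as in \Cref{prop:K1localsequence}, using the height-$1$ telescope conjecture) is the unit of $\Sp_{T(1)}=\Sp_{K(1)}$ and hence $\Mod(j_{\zeta})\otimes\Sp_{T(1)}\simeq\Sp_{T(1)}$; idempotent completion is invisible to nonconnective $K$-theory. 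Applying nonconnective $K$-theory, a localizing invariant, then gives the cofibre sequence $K(\FF_p)\xrightarrow{\iota}K(j_{\zeta}\otimes\Sp_{\geq 1}^{\omega})\to K(\Sp_{T(1)}^{\omega})$, i.e.\ a cofibre sequence with cofibre $\Sigma K(\FF_p)$.

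For the pullback square, note first that tensoring $\Sp_{\geq 1}^{\omega}\to\Sp^{\omega}\to(L_0^f\Sp)^{\omega}$ with $\Mod(j_{\zeta})^{\omega}$, resp.\ $\Mod(\ZZ_p^{h\ZZ})^{\omega}$, identifies $K(j_{\zeta}\otimes\Sp_{\geq 1}^{\omega})=\fib\bigl(K(j_{\zeta})\to K(j_{\zeta}[\frac 1 p])\bigr)$ and, by \Cref{lem:nullona1invkthy}, $K(\FF_p)=K(\ZZ_p^{h\ZZ}\otimes\Sp_{\geq 1}^{\omega})=\fib\bigl(K(\ZZ_p^{h\ZZ})\to K(\ZZ_p^{h\ZZ}[\frac 1 p])\bigr)$. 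Since $f\colon j_{\zeta}\to\ZZ_p^{h\ZZ}$ becomes an equivalence after inverting $p$ (both sides rationalize compatibly to $\QQ_p[\epsilon_{-1}]$, the rationalization of the $K(1)$-local sphere, and $f$ is an isomorphism on $\pi_0$), the map $K(j_{\zeta})\to K(j_{\zeta}[\frac 1 p])$ factors, up to this equivalence, through the left-hand vertical $K(j_{\zeta})\to K(\ZZ_p)\simeq K(\ZZ_p^{h\ZZ})$ of the pullback square of \Cref{thm:K1localsphere}; taking fibres and applying the pasting law for pullbacks yields $K(j_{\zeta}\otimes\Sp_{\geq 1}^{\omega})=K(\FF_p)\times_{\TC(\ZZ_p^{h\ZZ})}\TC(j_{\zeta})$, which is the asserted square.

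For the splitting, I would argue exactly as in \Cref{thm:K1localsphere}: by \Cref{lem:nullona1invkthy} the composite $K(\FF_p)\xrightarrow{\iota}K(j_{\zeta}\otimes\Sp_{\geq 1}^{\omega})\xrightarrow{f^{*}}K(\ZZ_p^{h\ZZ}\otimes\Sp_{\geq 1}^{\omega})\simeq K(\FF_p)$ is null, and $\fib(f^{*})=\fib(K(j_{\zeta})\to K(\ZZ_p^{h\ZZ}))$ (again take fibres of the localization squares), which is $p$-nil by \Cref{cor:jdgm}; hence $f^{*}$, and therefore $\iota$, is null after inverting $p$. Because $K_{*}(\FF_p)$ is prime-to-$p$ torsion in positive degrees, $\ZZ$ in degree $0$ and $0$ in negative degrees, and because $\iota$ kills the degree-$0$ generator — $[j_{\zeta}\otimes Z]$ is the image of $[Z]=0\in K_0(\Sp_{\geq 1}^{\omega})$, $Z$ being the cofibre of an even-degree self map of $\SP/p$ (resp.\ $\SP/(2,\eta)$) — the map $\iota$ vanishes on all homotopy groups, so the long exact sequence breaks into short exact sequences that split exactly as in \Cref{thm:K1localsphere} ($K_0(\FF_p)=\ZZ$ is projective in degree $1$, and the $p$-inverted splitting together with $K_{*}(\FF_p)=K_{*}(\FF_p)[\frac 1 p]$ in positive degrees lets one lift sections through the finite cyclic groups in higher degrees). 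The main obstacle is really just bookkeeping — pinning down the compact monochromatic category and its $j_{\zeta}$-linear form, and keeping the $p$-torsion and $p$-inverted parts of the argument separate; with \Cref{cor:jdgm} and \Cref{lem:nullona1invkthy} in hand everything else is a reorganization of the proof of \Cref{thm:K1localsphere}.
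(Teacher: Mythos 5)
Your proof is correct and takes essentially the same approach as the paper (devissage on $j_{\zeta}\otimes\Sp_{\geq 2}^{\omega}$, the equivalence of $j_{\zeta}$ and $\ZZ_p^{h\ZZ}$ after inverting $p$, pasting with the \Cref{thm:K1localsphere} pullback square, and \Cref{lem:nullona1invkthy} plus $K_*(\FF_p)$'s $p'$-torsion structure for the $\pi_*$-splitting). The only minor organizational difference is in the cofibre sequence: you tensor $\Sp_{\geq 2}^{\omega}\to\Sp_{\geq 1}^{\omega}\to\Sp_{T(1)}^{\omega}$ with $j_{\zeta}$ directly and identify both ends, whereas the paper instead takes vertical fibres of the cofibre sequence from \Cref{thm:K1localsphere} along the rationalization localization sequences, but both are just two slices of the same $3\times3$ grid.
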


\begin{proof}
	First note that
	
	\begin{center}
		\begin{tikzcd}
			K(j_{\zeta}\otimes \QQ)\ar[r] &K(L_{K(1)}\SP\otimes \QQ) \ar[r] & 0
		\end{tikzcd}
	\end{center}
	is a cofibre sequence since $j_{\zeta}\otimes \QQ \cong L_{K(1)}\SP\otimes \QQ$. Thus combining this with cofibre sequence from \Cref{thm:K1localsphere} via the localization sequences for rationalization, we get the claimed cofibre sequence. 
	
	To obtain the pullback square, we again consider what happens when we rationalize. Then $j_{\zeta}\otimes \QQ \cong \QQ_p^{h\ZZ}$, so 
	
	\begin{center}
		\begin{tikzcd}
			\pullback K(j_{\zeta}\otimes \QQ) \ar[r]\ar[d] & \ar[d]\TC(j_{\zeta}\otimes \QQ)\\
			K(\QQ_p)\ar[r] &\TC(\QQ_p^{h\ZZ}) 
		\end{tikzcd}
	\end{center}
	is not only a pullback square, but the vertical fibres vanish. By combining this pullback square with the one for $j_{\zeta}$ in \Cref{thm:K1localsphere}, we obtain the claimed pullback square. 
	
	Finally, the splitting of the cofibre sequence at the level of $\pi_*$ follows exactly as in the proof of \Cref{thm:K1localsphere}. Namely, after inverting $p$, the cofibre sequence becomes a split cofibre sequence by \Cref{lem:nullona1invkthy}, which shows that the cofibre sequence is split on $\pi_*$ in degrees $\neq1$ since $K(\FF_p)$ is $p'$-torsion in degrees $\neq 0$. In degree $1$, one gets a short exact sequence on homotopy groups since $K_0(\FF_p) \to K_0(j_{\zeta}\otimes \Sp_{\geq1}^{\omega})$ is null, and this short exact sequence splits since $K_0(\FF_p) \cong \ZZ$ is projective.
\end{proof}

\begin{customthm}{D}
	There is a fibre sequence $X \to K(\Sp_{\geq 2}) \to K(\FF_p)$ split on $\pi_*$, where $X$ is the total fibre of the square
	\begin{center}
		\begin{tikzcd}
			\TC(\SP_p)\ar[r]\ar[d] &\TC(\ZZ_p) \ar[d]\\
			\ar[r] \TC(j_{\zeta})& \TC(\ZZ_p^{h\ZZ})
		\end{tikzcd}
	\end{center}
	\begin{itemize}
		\item For $p>2$, $X$ is $(2p-3)$-connective, so $K_0(\Sp_{\geq 2}) = \ZZ$ with generator $[\SP/(p,v_1)]$.
		\item For $p=2$, $X$ is connective with $\pi_0X = \bigoplus_0^{\infty}\ZZ/2$, and the torsion free quotient of $K_0(\Sp_{\geq 2})$ is generated by $[\SP/(2,\eta,v_1)]$.
	\end{itemize}
\end{customthm}

\begin{proof}
	Consider the diagram of cofibre sequences given by tensoring the first localization sequence of rings with $j_{\zeta}$ and applying $K$-theory. We use \Cref{prop:K1localsequence} to identify the lower sequence.
	
\begin{center}
	\begin{tikzcd}
		\pullback K(\Sp_{\geq 2})\ar[r]\ar[d] & \ar[r]\ar[d] K(\SP_p)& \ar[d]K(L_1^f\SP_p) \\
		K(\FF_p)	\ar[r] &K(j_{\zeta}) \ar[r] &K(L_{K(1)}\SP)
	\end{tikzcd}
\end{center}
The right vertical map is an equivalence by \Cref{thm:K1localsphere}, so the left square is a pullback square. Thus the fibre $K(\Sp_{\geq2}) \to K(\FF_p)$ is the fibre $K(\SP) \to K(j_{\zeta})$ which by comparing the DGM squares for $\SP$ with the pullback square in \Cref{thm:K1localsphere} yields the pullback square. The claims about $X$ come from the claims about the vertical fibres in the pullback square of \Cref{thm:K1localsphere}. Namely, $\fib(\TC(\SP_p) \to \TC(\ZZ_p))$ is $(2p-2)$-connective by \Cref{lem:walderrorterm}, so combining this with the connectivity bound in \Cref{thm:K1localsphere} gives the claim.

The splitting on $\pi_*$ of the cofibre sequence follows from a similar argument as in the proof of \Cref{thm:K1localsphere}. Namely, the map $K(\SP_{2})[\frac 1 p] \to K(\FF_p)[\frac 1 p]$ is an equivalence, because $\fib(\TC(\SP_p) \to \TC(\ZZ_p))[\frac 1 p] \cong \fib(\TC(\SP_p[\frac 1 p]) \to \TC(\ZZ_p[\frac 1 p])) =0$ by \Cref{prop:tclocalization}, and $\fib(\TC(j_{\zeta}) \to \TC(Z_p^{h\ZZ}))[\frac 1 p] = 0$ by \Cref{cor:jdgm}. In particular, the cofibre sequence is split after inverting $p$, and since $\pi_iK(\FF_p) = \pi_iK(\FF_p)[\frac 1 p]$ in nonzero degrees, the cofibre sequence is split on homotopy in nonzero degrees. In degree $0$, it is a short exact sequence homotopy groups since the map $K_0(\FF_p) \to K_0(j_{\zeta})$ is null, and must be split since $K_0(\FF_p)$ is projective.

The remaining thing to justify is that $\SP/(p,v_1)$ for $p>2$ and $\SP/(2,\eta,v_1)$ for $p=2$ are generators of the torsion free quotient of $K_0$. The map $K(\Sp_{\geq2}) \to K(\FF_p)$ is on $K_0$ exactly this torsion free quotient, and the generator of $K_0$ was seen to be as claimed in the proof of \Cref{prop:K1localsequence}.
\end{proof}
\section{The K-theory sheaf}\label{sec:interpretations}

In this section we define the $K$-theory sheaf $K^{\Delta}$, and explain how it classifies stable tensor ideals, refining the classification of thick tensor ideals of the Balmer spectrum. After doing so, we extract some consequences of our main theorems.

Recall that if $C$ is a small rigid symmetric monoidal stable category\footnote{it is sufficient for $C$ to be monoidal, in which case $K^{\Delta}(C)$ is a sheaf of $\EE_1$-rings}, the Balmer spectrum of $C$, $\Spec^{\Delta}(C)$, allows one to classify thick tensor ideals of $C$ \cite{balmer2005spectrum}. To each open set $O$ of the Balmer spectrum, there is a finite localization $L_O(C)$ given by localizing away from objects whose support doesn't intersect $O$. By associating to each $O$ the algebraic $K$-theory of $L_O(C)$, we obtain the \textit{$K$-theory sheaf} $K^{\Delta}(C)$ on $\Spec^{\Delta}(C)$, which is a sheaf of $\EE_{\infty}$-rings.

Knowing $K^{\Delta}(C)$ allows one to in particular refine the classification of thick tensor ideals to a classification of stable tensor ideals. This is due to the following elementary result of Thomason:

\begin{proposition}[Thomason \cite{thomason1997classification}]\label{prop:stabletensorideals}
	Given a small stable category $C$, there is a bijection between dense stable subcategories $C' \subset C$ and subgroups of $K_0(C)$, given by the assignment $C' \mapsto K_0(C')\subset K_0(C)$.
\end{proposition}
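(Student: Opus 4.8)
The plan is to produce an explicit inverse to the assignment $C'\mapsto K_0(C')$ and then check it is a two-sided inverse. Recall that a full stable subcategory $C'\subseteq C$ is dense exactly when every object of $C$ is a retract of an object of $C'$, and that in $K_0$ of any stable category one has $[x[1]]=-[x]$ and $[z]=[y]-[x]$ for a cofiber sequence $x\to y\to z$. Given a subgroup $H\subseteq K_0(C)$, let $C_H\subseteq C$ be the full subcategory on the objects $x$ with $[x]\in H$. Since $H$ is a subgroup, $C_H$ contains $0$ and is closed under shifts in both directions and under cofibers (hence fibers), so it is a stable subcategory; and it is dense, because for every $x$ the object $x\oplus x[1]$ has class $[x]-[x]=0\in H$, so it lies in $C_H$ and has $x$ as a retract. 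This is the candidate inverse $H\mapsto C_H$.

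Next I would record that $\Phi(C'):=$ (the image of $K_0(C')\to K_0(C)$) is a subgroup, and that $\Phi(C_H)=H$: the image is a subgroup of $K_0(C)$ contained in $H$, while conversely any $h\in H$, written in $K_0(C)$ as $\sum_i n_i[z_i]$, equals $[w]$ for $w:=\bigoplus_{n_i>0}z_i^{\oplus n_i}\oplus\bigoplus_{n_i<0}(z_i[1])^{\oplus|n_i|}\in C_H$, so $h$ is hit. Thus $H\mapsto C_H$ is a section of $\Phi$. Everything then reduces to the following recognition statement, which shows $\Phi$ is also injective — hence, being surjective, a bijection with inverse $H\mapsto C_H$ — and which legitimizes the identification of $K_0(C')$ with the subgroup $\Phi(C')$ by showing $K_0(C')\to K_0(C)$ is injective:

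\textbf{Recognition Lemma.} For a dense stable subcategory $C'\subseteq C$ one has $C'=C_{\Phi(C')}$; equivalently, $x\in C'$ if and only if $[x]\in\Phi(C')$, and $K_0(C')\to K_0(C)$ is injective. To prove it, the forward implication of the membership statement is trivial. For the converse, given $x$ with $[x]\in\Phi(C')$, first reduce by the direct-sum-and-shift trick above to the case $[x]=[w]$ for a single $w\in C'$. The relation $[x]=[w]$ holds in $K_0(C)$, which has the presentation by the free abelian group on isomorphism classes of objects modulo the relations $[y]=[x]+[z]$ for cofiber sequences $x\to y\to z$; hence $[x]-[w]$ is a finite integral combination of such elementary relations, and collecting the terms according to the sign of their coefficient turns this into an honest isomorphism of objects of $C$
\[ x\oplus P\;\cong\;w\oplus Q, \]
where $P$ and $Q$ are explicit finite direct sums of the terms of the cofiber sequences occurring. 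One must then upgrade this to $x\in C'$: using density, enlarge every term of every cofiber sequence by a complementary summand so that it becomes an object of $C'$, form the correspondingly enlarged cofiber sequences, and use the octahedral axiom together with the closure of $C'$ under cofibers of maps between its objects to reorganize everything so that $x$ is exhibited as built, entirely inside $C'$, from $w$ and these enlarged sequences. The injectivity clause is obtained in the same way, starting from an isomorphism $a\oplus P\cong b\oplus Q$ arising from a relation $[a]=[b]$ with $a,b\in C'$. This is exactly Thomason's argument in \cite{thomason1997classification}, whose bookkeeping I would follow.

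The conceptual subtlety, and the one place care is needed, is that a stable subcategory is not closed under retracts, so density only supplies objects as \emph{summands} of members of $C'$, never directly as members; one therefore cannot simply cancel $P$ and $Q$ in the isomorphism above. The remedy is to only ever form cofibers of maps between genuine objects of $C'$, which forces one to choose the enlarging summands — and hence the $K_0$-classes of all the auxiliary objects produced along the way — compatibly, and this is precisely where the explicit $K_0$-presentation and the octahedral axiom do the work. I expect this bookkeeping to be the only real obstacle; the remainder of the argument is formal.
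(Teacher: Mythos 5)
Your proposal is correct and is a faithful reconstruction of Thomason's argument, which is exactly what the paper invokes (by citation, without reproducing the proof) for this proposition. You correctly identify both the inverse assignment $H\mapsto C_H$ and the key technical point — that density only supplies objects as summands, so the $K_0$-presentation and the octahedral axiom must be used to choose enlarging summands compatibly — and this is precisely the content of Thomason's Lemma 2.4 and the proof of his Theorem 2.1, to which you (like the paper) ultimately defer for the bookkeeping.
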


Here, a dense subcategory of a stable category is one such that the inclusion induces an equivalence on idempotent completions. It follows from \Cref{prop:stabletensorideals} that every stable tensor ideal is given by a thick subcategory $C' \subset C$ and a submodule of $K_0(C')$ as a $K_0(C)$-module. $K_0(C')$ can be extracted from $K^{\Delta}(C)$ as follows: since $K$-theory commutes with filtered colimits, we can assume that $C'$ is a compact stable tensor ideal. Its support is then a closed set $Z_{C'}$ of the Balmer spectrum, and we let $O_{C'}$ denote the open complement. We have a cofibre sequence $K(C') \to K(C) \to K(L_{O_{C'}}(C))$, allowing us to extract $K_0(C')$.

We essentially ran the above process of extracting $K(C')$ in the proof of \Cref{thm:type2} to obtain a description of $K(\Sp_{\geq2})$ as the fibre of $K(\SP_p) \to K(L_1^f\SP_p)$. The following is a consequence of \Cref{thm:type2}.
\begin{corollary}\label{cor:densesubcattype2}
	The dense stable subcategories of $\Sp_{\geq2}^{\omega}$ for $p>2$ are in bijection with subgroups of $\ZZ$, and the dense stable subcategories of $\Sp_{\geq2}^{\omega}$ at the prime $2$ are in bijection with subgroups of $\ZZ\oplus \bigoplus_0^{\infty}\FF_2$.
\end{corollary}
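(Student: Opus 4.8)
The plan is to deduce this directly from Thomason's \Cref{prop:stabletensorideals} together with the computation of $K_0(\Sp_{\geq 2}^{\omega})$ furnished by \Cref{thm:type2}. By \Cref{prop:stabletensorideals}, dense stable subcategories of the small stable category $\Sp_{\geq 2}^{\omega}$ are in bijection with subgroups of $K_0(\Sp_{\geq 2}^{\omega})$, so the entire content of the corollary is the identification of this group.

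First I would extract $K_0$ from the split fibre sequence $X \to K(\Sp_{\geq 2}) \to K(\FF_p)$ of \Cref{thm:type2}. Since the sequence is split on $\pi_*$, applying $\pi_0$ gives a short exact sequence $0 \to \pi_0 X \to K_0(\Sp_{\geq 2}^{\omega}) \to K_0(\FF_p) \to 0$ which splits, so $K_0(\Sp_{\geq 2}^{\omega}) \cong \pi_0 X \oplus \ZZ$, using $K_0(\FF_p) = \ZZ$. For $p > 2$, \Cref{thm:type2} tells us $X$ is $(2p-3)$-connective; since $2p - 3 \geq 1$ we get $\pi_0 X = 0$, hence $K_0(\Sp_{\geq 2}^{\omega}) = \ZZ$. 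For $p = 2$, \Cref{thm:type2} gives $\pi_0 X = \bigoplus_0^{\infty} \ZZ/2 = \bigoplus_0^{\infty} \FF_2$, hence $K_0(\Sp_{\geq 2}^{\omega}) = \ZZ \oplus \bigoplus_0^{\infty} \FF_2$.

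Combining the two displays: dense stable subcategories of $\Sp_{\geq 2}^{\omega}$ are in bijection with subgroups of $\ZZ$ when $p > 2$, and with subgroups of $\ZZ \oplus \bigoplus_0^{\infty} \FF_2$ when $p = 2$. This is exactly the claim. There is essentially no obstacle here beyond correctly invoking \Cref{thm:type2}; the only point requiring a line of care is that the splitting on $\pi_*$ is needed (not just the existence of the fibre sequence) in order to conclude that $K_0(\Sp_{\geq 2}^{\omega})$ is the direct sum rather than merely an extension — but this is supplied by \Cref{thm:type2}, whose own proof establishes the splitting via the vanishing of the relevant boundary map and the projectivity of $K_0(\FF_p) = \ZZ$.
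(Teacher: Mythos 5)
Your proof is correct and follows exactly the route the paper takes: \Cref{cor:densesubcattype2} is presented there as an immediate consequence of \Cref{prop:stabletensorideals} together with the computation of $K_0(\Sp_{\geq 2}^{\omega})$ from \Cref{thm:type2}, with the $\pi_*$-splitting and the stated connectivity/$\pi_0$ of $X$ doing exactly the work you describe. Nothing to add.
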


The following corollary is a consequence of the fact that the map $K_0(\Sp_{\geq1}) \to K_0(\Sp_{K(1)}^{\omega})$ is surjective (in fact it is an isomorphism).

\begin{corollary}\label{cor:k1locallift}
	Any compact $K(1)$-local spectrum is the $K(1)$-localization of a type $1$ spectrum.
\end{corollary}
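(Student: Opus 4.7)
The plan is to recast the lifting statement as a $K_0$-level condition via Thomason's classification, and then to deduce the $K_0$ surjectivity from the localization sequence together with \Cref{thm:type2}.

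First, I would identify the essential image $\mathcal{I}$ of the $K(1)$-localization functor $\Sp_{\geq 1}^{\omega} \to \Sp_{K(1)}^{\omega}$ with the essential image of the Verdier quotient $\Sp_{\geq 1}^{\omega}/\Sp_{\geq 2}^{\omega} \to \Sp_{K(1)}^{\omega}$, where the target is the idempotent completion of the source by the height $1$ telescope conjecture (as used in the proof of \Cref{prop:L1LK1}). Since Verdier quotients of stable categories are stable and the inclusion into the idempotent completion is fully faithful and exact, $\mathcal{I}$ is a stable subcategory of $\Sp_{K(1)}^{\omega}$, and it is dense by the definition of idempotent completion.

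Next, I would invoke Thomason's classification (\Cref{prop:stabletensorideals}) to identify $\mathcal{I}$ with a subgroup of $K_0(\Sp_{K(1)}^{\omega})$; since $\mathcal{I}$ is equivalent as a stable category to $\Sp_{\geq 1}^{\omega}/\Sp_{\geq 2}^{\omega}$, this subgroup coincides with the image of the surjection $K_0(\Sp_{\geq 1}) \twoheadrightarrow K_0(\Sp_{\geq 1}^{\omega}/\Sp_{\geq 2}^{\omega})$ inside $K_0(\Sp_{K(1)}^{\omega})$. So the corollary reduces to showing that $K_0(\Sp_{\geq 1}) \to K_0(\Sp_{K(1)}^{\omega})$ is surjective, after which $\mathcal{I}$ exhausts the whole category.

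Finally, the fiber sequence of nonconnective $K$-theory associated to the localization sequence $\Sp_{\geq 2}^{\omega} \to \Sp_{\geq 1}^{\omega} \to \Sp_{K(1)}^{\omega}$ yields a long exact sequence on homotopy groups, so the surjectivity in question follows from the vanishing of $K_{-1}(\Sp_{\geq 2}^{\omega})$. This is where \Cref{thm:type2} does the real work: it exhibits $K(\Sp_{\geq 2})$ as an extension of $K(\FF_p)$ by a connective spectrum $X$, and since $K_{-1}(\FF_p) = 0$ and $\pi_{-1}X = 0$, we conclude that $K_{-1}(\Sp_{\geq 2}) = 0$. The main conceptual point, and the step to handle with care, is the translation between the categorical lifting question and the $K_0$ surjectivity via Thomason — in particular, tracking the distinction between the Verdier quotient and its idempotent completion, which is precisely what the density condition in \Cref{prop:stabletensorideals} encodes.
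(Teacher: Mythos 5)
Your proof is correct and follows the paper's own argument: identify the $K(1)$-localizations of type $1$ spectra as a dense stable subcategory of $\Sp_{K(1)}^{\omega}$, apply Thomason's classification (\Cref{prop:stabletensorideals}), and conclude from surjectivity of $K_0(\Sp_{\geq1}) \to K_0(\Sp_{K(1)}^{\omega})$. The only difference is that you spell out the surjectivity (which the paper simply asserts) via the localization sequence and the vanishing of $K_{-1}(\Sp_{\geq2})$ from the connectivity statement in \Cref{thm:type2}, which is a valid justification.
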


\begin{proof}
	One observes that the $K(1)$-local spectra which are $K(1)$-localizations of type $1$ spectra are a stable subcategory of $\Sp_{K(1)}^{\omega}$ corresponding to the subgroup of $K_0$ that is the image of $K_0(\Sp_{\geq1})$.
\end{proof}

Note that apriori all that is clear is that a compact $K(1)$-local spectrum is a retract of the $K(1)$-localization of a finite type $1$ spectrum.

\begin{question}
	Given a compact $K(1)$-local spectrum, is there a way of finding a lift of it to $\Sp_{\geq1}$?
\end{question}

Next we interpret \Cref{thm:pinverted} in the corollary below.
We can say that a stable subcategory $C' \subset C$ is \textit{$p$-saturated} if $\oplus_1^p X \in C' \implies X \in C'$. 

\begin{corollary}[Burklund--Levy]\label{cor:psatsubcat}
	The nonzero $p$-saturated stable subcategories of $\Sp_{p}$ are specified by a type $n$ and a $\ZZ[\frac 1 p]$-submodule of $\ZZ[\frac 1 p]$.
\end{corollary}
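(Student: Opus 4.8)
The plan is to deduce \Cref{cor:psatsubcat} from \Cref{thm:pinverted} together with the general machinery of dense stable subcategories developed in this section. First I would reduce the problem to a computation of the relevant $K_0$ groups. A $p$-saturated stable subcategory $C' \subset \Sp_p$ is in particular a stable subcategory; by the thick subcategory theorem its idempotent completion is one of the $\Sp^{\omega}_{\geq n}$ (or all of $\Sp_p^{\diamondsuit}$), so $C'$ is a dense stable subcategory of some $\Sp^{\omega}_{\geq n}$. By \Cref{prop:stabletensorideals}, dense stable subcategories of $\Sp^{\omega}_{\geq n}$ correspond bijectively to subgroups of $K_0(\Sp_{\geq n})$, and the $p$-saturation condition on $C'$ translates exactly into the condition that the corresponding subgroup $H \subseteq K_0(\Sp_{\geq n})$ is $p$-saturated in the sense that $px \in H \implies x \in H$ (using that $\oplus_1^p X \in C'$ corresponds to $p[X] \in H$, which is valid because $K_0$ of a stable category is a group and $[X^{\oplus p}] = p[X]$). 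So the corollary amounts to identifying the $p$-saturated subgroups of $K_0(\Sp_{\geq n})$.

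The key input is then the structure of $K_0(\Sp_{\geq n})$ rationally, plus control of its $p$-torsion. From \Cref{thm:pinverted} we have $K(\Sp_{\geq n})[\tfrac1p] \cong K(\FF_p)[\tfrac1p]$, and $K_0(\FF_p) = \ZZ$, so $K_0(\Sp_{\geq n})[\tfrac1p] = K_0(\Sp_{\geq n}) \otimes \ZZ[\tfrac1p] \cong \ZZ[\tfrac1p]$, with a generator given by the class of a generalized Moore spectrum $[\SP/(p,v_1^{p^{i_1}},\dots,v_{n-1}^{p^{i_{n-1}}})]$. The point of $p$-saturation is precisely that it kills the $p$-torsion: if $H \subseteq K_0(\Sp_{\geq n})$ is $p$-saturated, then $H$ contains the $p$-power-torsion subgroup $T$ (since $0 \in H$ and $p^k x = 0 \in H$ forces $x \in H$ by induction), and conversely $p$-saturated subgroups of $K_0(\Sp_{\geq n})$ correspond bijectively to $p$-saturated subgroups of $K_0(\Sp_{\geq n})/T$, hence — after inverting $p$, which is harmless on the quotient by all $p$-torsion — to $\ZZ[\tfrac1p]$-submodules of $K_0(\Sp_{\geq n})[\tfrac1p] \cong \ZZ[\tfrac1p]$. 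Here I would need the small lemma that a subgroup $H$ of an abelian group $A$ with $H \supseteq T$ (the $p$-torsion) is $p$-saturated if and only if $H/T$ is a $\ZZ[\tfrac1p]$-submodule of $A/T$, and that passing to $A[\tfrac1p] = (A/T)[\tfrac1p]$ is a bijection on $\ZZ[\tfrac1p]$-submodules when $A/T$ is torsion-free; this is elementary. Assembling: nonzero $p$-saturated stable subcategories of $\Sp_p$ are indexed by a choice of type $n \geq 0$ together with a $\ZZ[\tfrac1p]$-submodule of $\ZZ[\tfrac1p]$, which is the statement.

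The main obstacle I anticipate is purely bookkeeping rather than conceptual: making sure the $p$-saturation hypothesis is correctly matched with "contains all $p$-torsion and descends to a $\ZZ[\tfrac1p]$-submodule." One has to be a little careful that $K_0(\Sp_{\geq n})$ may have $p$-torsion that is not $p$-power-torsion-free-complement-able as a subgroup in a canonical way — but this does not matter, since we only ever quotient by the full torsion subgroup $T$ and every $p$-saturated $H$ automatically contains $T$. A second minor point is the case $n=0$: there $\Sp^{\omega}_{\geq 0} = \Sp_p^{\diamondsuit}$ and $K_0(\Sp_p) \cong \ZZ$ (rationally $\ZZ[\tfrac1p]$), so the same analysis applies uniformly, and "type $n$" should be understood to range over $n \geq 0$ with the convention already fixed in the introduction. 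I would also remark that when $\dim$ issues or finite-colimit subtleties arise in \Cref{prop:stabletensorideals} they are handled exactly as in the proof of \Cref{cor:densesubcattype2}, i.e. by the observation that $K$-theory commutes with filtered colimits so one may assume $C'$ is generated by a single object; I would point the reader there rather than reproving it.
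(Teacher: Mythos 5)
The paper states this corollary without writing out a proof (it is presented simply as an ``interpretation'' of \Cref{thm:pinverted}), so there is no written argument to compare against; but your derivation is correct and is clearly the intended one. You reduce to a dense subcategory of $\Sp_{\geq n}^{\omega}$ via the thick subcategory theorem, invoke Thomason's \Cref{prop:stabletensorideals} to pass to subgroups of $K_0(\Sp_{\geq n})$, observe that $p$-saturation of $C'$ is equivalent to $p$-saturation of the corresponding subgroup $H$, and then use $K_0(\Sp_{\geq n})[\frac1p]\cong\ZZ[\frac1p]$ from \Cref{thm:pinverted} to identify $p$-saturated subgroups with $\ZZ[\frac1p]$-submodules of $\ZZ[\frac1p]$ (every $p$-saturated $H$ contains the torsion $T$, which is $p$-power torsion since $K_0[\frac1p]$ is torsion-free, and $H\mapsto H[\frac1p]$ is then a bijection onto $\ZZ[\frac1p]$-submodules). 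The handling of the $n=0$ case, where \Cref{thm:pinverted} is not stated but $K_0(\Sp_p^{\diamondsuit})\cong\ZZ$ directly, is a good catch and is necessary for the statement as written.
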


\section{Constructing type 2 spectra}\label{sec:const}

In \Cref{thm:type2}, it was shown that $K_0(\Sp_{\geq2}) \simeq \ZZ\oplus \bigoplus_0^{\infty}\FF_2$ at the prime $2$. Here we explain how to construct type $2$ spectra representing the $2$-torsion classes. The key point is understanding the boundary maps in the $K$-theory of a localization sequence, which is the purpose of the \Cref{lem:boundarymap} below.

\begin{construction}\label{const:boundarycomputation}
	Let $C \xrightarrow{i} D \xrightarrow{\pi} E$ be a localization sequence, and let $f:d \to d$ for $d \in D$ be a map. Suppose $f$ has the property that its cofibre is in $C$. Then if $\Map(d,d)_f$ denotes the connected component containing $f$, taking the cofibre gives a map $\cof:\Map(d,d)_f \to \mathrm{B}\Aut(\cof f)$.
	
	On the other hand, since $\cof f$ vanishes in $E$, composing with $\pi$ gives a map $\pi:\Map(d,d)_f \to \Aut(\pi d)$.
\end{construction}

Recall also that given an object $c \in C$, there is a canonical map $\mathrm B\Aut(c) \to C^{\simeq} \to \Omega^{\infty}K(C)$

\begin{lemma}\label{lem:boundarymap}
In the situation of \Cref{const:boundarycomputation}, the diagram below commutes up to a sign, where $\delta$ is the boundary map associated to the localization sequence.

\begin{center}
	\begin{tikzcd}	\Map(d,d)_f\ar[r,"\pi"]\ar[dr,"\cof"]& \Aut(\pi d)\ar[r] & \Omega^{\infty+1}K(E) \ar[d,"\delta"]\\
	&	\mathrm{B}\Aut(\cof f)\ar[r] & \Omega^{\infty}K(C)
	\end{tikzcd}
\end{center}
\end{lemma}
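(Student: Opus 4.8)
The plan is to reduce the statement to the universal case where everything is transparent, and then transport along the map $\mathrm{B}\Aut(\cof f)\to \Omega^\infty K(C)$. First I would recall the construction of the boundary map $\delta$: the localization sequence $K(C)\to K(D)\to K(E)$ is a fibre sequence of spectra, and $\delta\colon \Omega^\infty\Sigma K(E)\to \Omega^\infty K(C)$ is the connecting map, so it is characterized by the fact that the composite $\Omega^\infty\Sigma K(E)\xrightarrow{\delta}\Omega^\infty K(C)\to \Omega^\infty K(D)$ is canonically null, with a specified nullhomotopy coming from the fibre sequence. So to identify the two maps $\Map(d,d)_f\to \Omega^\infty K(C)$ in the diagram — namely $\delta$ precomputed with the top route through $\Aut(\pi d)$, and the diagonal route through $\mathrm{B}\Aut(\cof f)$ — it suffices to produce a nullhomotopy of the composite of each with $\Omega^\infty K(C)\to \Omega^\infty K(D)$, and check these nullhomotopies agree (up to sign).

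The key geometric input is the following: for a map $f\colon d\to d$ in $D$, there is a cofibre sequence $d\xrightarrow{f} d\to \cof f$ in $D$, which in $K(D)$ gives $[\cof f] = [d]-[d]=0$, but more structurally gives a canonical path in $\Omega^\infty K(D)$ from the point $\cof f$ to the basepoint, i.e. a nullhomotopy of the composite $\mathrm{B}\Aut(\cof f)\to D^\simeq\to \Omega^\infty K(D)$ when restricted appropriately — more precisely, as $f$ varies over $\Map(d,d)_f$ this assembles to a nullhomotopy of $\Map(d,d)_f\xrightarrow{\cof}\mathrm{B}\Aut(\cof f)\to \Omega^\infty K(C)\to \Omega^\infty K(D)$. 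This is exactly the ``additivity'' nullhomotopy: the map factors through the subspace of $D^\simeq$ of objects which are a difference of $[d]$ with itself via an automorphism-twisted identification. On the other side, the composite $\Map(d,d)_f\xrightarrow{\pi}\Aut(\pi d)\to \Omega^\infty\Sigma K(E)\xrightarrow{\delta}\Omega^\infty K(C)\to \Omega^\infty K(D)$ is null because $\delta$ followed by $K(C)\to K(D)$ is null, and I want to check that the nullhomotopy produced this way matches the one above. Here I would use that $\pi(f)\in\Aut(\pi d)$ together with the identification $\cof f\in C$ and the data that $\pi(\cof f)=0$ canonically (with its path to $0$ in $E$) is precisely the data that assembles to a point of the fibre of $\Omega^\infty K(D)\to \Omega^\infty K(E)$, i.e. a point of $\Omega^\infty K(C)$, and that this is $\delta$ applied to the loop $\pi(f)$ in $\mathrm{B}\Aut(\pi d)$ regarded via $\Aut(\pi d)\to \Omega^\infty\Sigma K(E)$. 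I expect it cleanest to first check the statement for the universal localization sequence generated by one object $d$ with one self-map $f$ (so $D=\mathrm{Mod}(\End(d))^\omega$-type model, or even just the free situation $\mathbb{S}[x_1]\to\mathbb{S}[x_1^{\pm1}]$ flavour), where both maps are computed by hand, and then observe that a general $(C,D,E,d,f)$ receives a symmetric monoidal / exact functor from the universal one sending the universal data to $(d,f)$, so naturality of $\delta$, of $\cof$, and of the $\mathrm{B}\Aut(-)\to\Omega^\infty K(-)$ maps finishes the argument.

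The main obstacle I anticipate is bookkeeping the nullhomotopies coherently — i.e. showing the two nullhomotopies of the composite to $\Omega^\infty K(D)$ literally agree up to the claimed sign, rather than just both existing. Concretely, the subtlety is that $\delta$ is only defined up to the choice of nullhomotopy implicit in the fibre sequence, and the cofibre-sequence data $d\xrightarrow{f}d\to\cof f$ gives the ``other'' nullhomotopy; matching them requires unwinding that the additivity/cofibre nullhomotopy in $K$-theory is \emph{the same} path that exhibits $[\cof f]$ as lying in $\ker(K_0(D)\to K_0(E))$ lifted to $K_0(C)$. I would handle this by working with the model of $K(C)$ as the (connective) $K$-theory space built from the $S_\bullet$-construction or from the group-completion of $D^\simeq$, where the cofibre sequence $d\to d\to \cof f$ is literally a $1$-simplex in $S_2$, so that the path it provides in $\Omega^\infty K(D)$ is tautologically the image of the boundary. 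The sign ambiguity is exactly the usual one in identifying $\partial$ of a fibration with a cofibre map and I would simply absorb it into the ``up to a sign'' in the statement.
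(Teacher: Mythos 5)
Your framework is reasonable, and it is genuinely different from the paper's. You propose to characterize $\delta$ via the fibre sequence presentation $\Omega K(E) \simeq \mathrm{fib}(K(C)\to K(D))$, observe that both routes in the diagram factor through $\Omega^{\infty+1}K(E)$ (the top route tautologically, the diagonal route via the additivity nullhomotopy furnished by the cofibre sequence $d\xrightarrow{f}d\to \cof f$), and then compare the two resulting maps $\Map(d,d)_f\to\Omega^{\infty+1}K(E)$. That is a sound way to reduce the problem, even if the phrasing ``check these nullhomotopies agree'' is a bit loose: what has to be compared are the pairs (map to $K(C)$, nullhomotopy to $K(D)$), i.e.\ the induced maps into $\Omega K(E)$, not nullhomotopies in isolation. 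Your proposed universal reduction to $\Mod(\mathbb{S}[x])^{\omega}\to\Mod(\mathbb{S}[x^{\pm1}])^{\omega}$ with $(d,f)=(\mathbb{S}[x],x)$ does work in principle, since such a $(d,f)$ with $\cof f\in C$ induces a map of localization sequences, and the boundary map is natural in exact maps of localization sequences.

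However, the proposal stops precisely at the step that constitutes the content of the lemma. You say the additivity path in the $S_\bullet$-model is ``tautologically the image of the boundary,'' but this identification is not tautological --- it is the thing to be proved, and it requires tracing through the explicit simplices of whichever model you adopt. (Also, a cofibre sequence is an \emph{object} of $S_2$, a $0$-simplex of $|S_\bullet|$ in the filtration direction, not a $1$-simplex.) The paper carries out exactly this step, but in the $Q$-construction/span model $\Omega^{\infty}K(C)\simeq\Omega|\mathrm{Span}(C)|$: it writes down an explicit diagram representing the image of $f$ as a map $S^2\to|\mathrm{Span}(E)|$, lifts it by hand to a diagram of spans of $D$ whose boundary circle lands in $|\mathrm{Span}(C)|$, and then reads off that this boundary circle is (up to sign) the loop classified by $\{\cof f\}\to C^\simeq\to\Omega^\infty K(C)$. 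Until you produce the analogous explicit computation --- either in the $S_\bullet$ model or in the universal case --- the comparison of the two maps to $\Omega K(E)$ is unverified, and the argument has a genuine hole where the actual work needs to happen. Separately, note that the domain of $\delta$ should be $\Omega^\infty\Omega K(E)=\Omega^{\infty+1}K(E)$, not $\Omega^\infty\Sigma K(E)$; this is likely a typo but worth correcting since the sign you absorb at the end lives exactly in that distinction.
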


\begin{proof}
	Recall (eg see \cite[Definition 3.4]{hebestreit2022localisation}) that $\Omega^{\infty}K(C)$ can be modeled via the $Q$-construction $\Omega |\Span(C)|$, where $\Span(C)$ is the category of spans of objects in $C$, with composition given by pulling back. The natural map $C^{\simeq} \to \Omega^{\infty}K(C)$ sends $x$ to the span $0 \leftarrow x\rightarrow 0$.
	
	Consider the diagram below in $|\Span(C)|$ where an arrow with a perpendicular line indicates the direction of a span as a map in $\Span(C)$. All of the $2$-cells are the obvious ones, except for the one labeled $\pi f^{-1}$, where $\pi f^{-1}$ is the difference between the $2$-cell and the obvious one.
	\begin{center}
		\begin{tikzcd}
			&& 0 \\
			& 0 & {\Sigma \pi d} & 0 \\
			0 & 0 & {\Sigma \pi d} & 0 & 0 \\
			& {\pi d} & 0 & {\pi d} \\
			& {} & 0 & {} \\
			&& 0
			\arrow["\shortmid"{marking}, from=2-3, to=3-3]
			\arrow[from=4-2, to=5-3]
			\arrow["\shortmid"{marking}, from=4-2, to=3-1]
			\arrow[from=4-4, to=5-3]
			\arrow["\shortmid"{marking}, from=4-4, to=3-5]
			\arrow["\shortmid"{marking}, from=2-4, to=3-5]
			\arrow[from=2-4, to=1-3]
			\arrow[from=2-2, to=1-3]
			\arrow["\shortmid"{marking}, from=2-2, to=3-1]
			\arrow["\shortmid"{marking}, from=3-2, to=3-1]
			\arrow[from=3-2, to=3-3]
			\arrow[from=3-4, to=3-3]
			\arrow["\shortmid"{marking}, from=3-4, to=3-5]
			\arrow[from=2-3, to=1-3]
			\arrow["\shortmid"{marking}, from=4-3, to=3-3]
			\arrow[from=4-3, to=5-3]
			\arrow[bend left = 50, from=6-3, to=3-1]
			\arrow["\shortmid"{marking}, bend right = 50, from=6-3, to=3-5]
			\arrow["{\pi f^{-1}}"', shift right=5, shorten <=30pt, shorten >=30pt, Rightarrow, from=5-2, to=5-4]
		\end{tikzcd}\end{center}
	
	Because the boundary is sent to $0$, this diagram represents a map $S^2 \to |\Span(C)|$. Since it is given by conjugating $\pi f^{-1}$ by the triangles in the diagram above, it represents the image of $f$ in $\Omega^{\infty+1} K(E)$. To compute $\delta$ of this, consider the lift of the diagram below to $|\Span(D)|$:
	
		\begin{center}
		\begin{tikzcd}
			&& 0 \\
			& \cof f & {\Sigma d} & \cof f \\
			\cof f & \cof f & {\Sigma d} &\cof f & 0 \\
			& {d} & 0 & {d} \\
			& {} & 0 & {} \\
			&& 0
			\arrow["\shortmid"{marking}, from=2-3, to=3-3]
			\arrow[from=4-2, to=5-3]
			\arrow["\shortmid"{marking}, from=4-2, to=3-1]
			\arrow[from=4-4, to=5-3]
			\arrow["\shortmid"{marking}, from=4-4, to=3-5]
			\arrow["\shortmid"{marking}, from=2-4, to=3-5]
			\arrow[from=2-4, to=1-3]
			\arrow[from=2-2, to=1-3]
			\arrow["\shortmid"{marking}, from=2-2, to=3-1]
			\arrow["\shortmid"{marking}, from=3-2, to=3-1]
			\arrow[from=3-2, to=3-3]
			\arrow[from=3-4, to=3-3]
			\arrow["\shortmid"{marking}, from=3-4, to=3-5]
			\arrow[from=2-3, to=1-3]
			\arrow["\shortmid"{marking}, from=4-3, to=3-3]
			\arrow[from=4-3, to=5-3]
			\arrow[bend left = 50, from=6-3, to=3-1]
			\arrow["\shortmid"{marking}, bend right = 50, from=6-3, to=3-5]
			\arrow["{ f^{-1}}"', shift right=5, shorten <=30pt, shorten >=30pt, Rightarrow, from=5-2, to=5-4]
	\end{tikzcd}\end{center}
Here $f^{-1}$ is the map of spans identifying $d$ with the fibre of $d \to \cof f$. Note there is a subtlety about making the diagram lift the previous one: the maps $\Sigma d \to \Sigma \pi d$ are given by multiplication by $f$, and the maps $d \to \pi d$ are the canonical ones.
This lift is a diagram in the shape of $D^2$ and has the property that its boundary $S^1$ lives in $|\Span(C)|$. Thus the boundary $S^1$ represents $\delta$ applied to the map $S^2 \to |\Span(E)|$. But by composing the composable maps in the boundary, we find that the boundary is canonically homotopic to (up to a sign) $\{\cof f\} \to C^{\simeq} \to \Omega^{\infty}K(C)$. By the naturality of this construction in $f$, we have shown the diagram commutes as claimed.
\end{proof}

Given the lemma, we now find explicit generators for $K_1(\SP^{h\ZZ})$. There is an equivalence $\SP^{h\ZZ} \simeq \SP[\epsilon_{-1}]$, and a localization sequence

\begin{center}
	\begin{tikzcd}
		\Mod(\SP[\epsilon_{-1}])\ar[r] &\Mod(\SP[x]) \ar[r] & \Mod(\SP[x^{\pm1}])
	\end{tikzcd}
\end{center}
where we identify $\SP[\epsilon_{-1}] = \End_{\SP[x]}(\SP[x]/x)$. From \Cref{lem:walderrorterm}, and examining the long exact sequence on homotopy groups, we learn that $K_1(\SP[\epsilon_{-1}]) = K_1(\SP) \oplus \coker(\ZZ/2[x] \to \ZZ/2[x^{\pm1}])$ where $\ZZ/2[x]$ and $\ZZ/2[x^{\pm1}]$ are $\mathrm{HH}_0(\pi_0R; \pi_1 R)$, where $R = \SP[x],\SP[x^{\pm}]$. In the proof of \Cref{lem:walderrorterm}, the Hochschild homology term is coming from $H_0(\BGL(\pi_0R);\pi_2(\BGL(R))$, which is isomorphic to $H_0(\BGL_1(\pi_0R);\pi_2(\BGL_1(R))$ since the bimodule is symmetric. Thus those $K_2$ classes come from the classes $\eta x^{i} \in \pi_2 \mathrm{B}\Aut(R)$. Thus to find representatives of the $K_1$ classes, we need to compute the boundary map $K_2(\SP[x^{\pm1}]) \to K_1(\SP[\epsilon_{-1}])$ on these classes, but this exactly what \Cref{lem:boundarymap} is made to do. Namely, note that $\eta x^{-i} \in \pi_1 \Aut(\SP[x^{\pm 1}])$ lifts to $\pi_1$ of the component of $\Map_{\SP[x]}(\SP[x],\SP[x])$ containing the map $x^i$, by composing with the automorphism $\eta$ on the target.

Taking the cofibre, we get a nontrivial element, which we call $g_i$ of $\pi_1(\mathrm{B}\Aut(\cof f)) = \pi_0(\Aut(\cof f))$. $g_i$ is the map obtained as the horizontal cofibre of the diagram

\begin{center}
\begin{tikzcd}
	{\SP[x]} & {\SP[x]} \\
	{\SP[x]} & {\SP[x]}
	\arrow["\eta", shorten <=10pt, shorten >=10pt, Rightarrow, from=2-1, to=1-2]
	\arrow[Rightarrow, no head, from=1-2, to=2-2]
	\arrow["{x^i}", from=1-1, to=1-2]
	\arrow[Rightarrow, no head, from=1-1, to=2-1]
	\arrow["{x^i}"', from=2-1, to=2-2]
\end{tikzcd}
\end{center}

Since $g_i$ is nontrivial, for $i=1$ the only possibility is that $g_1 = 1 + \beta_x \eta$. In general, we can use the fact that $\eta{x^{-i}}$ is in the image of the analogous localization sequence for $\SP[x^i]$ to learn that $g_i = 1 + \beta_{x^i}\eta$.

We can describe these maps in terms of $\SP^{h\ZZ}$. Let $X_i$ be the module corresponding to $\SP[x]/x^i$. It has a cellular filtration by the other $X_j$, defined inductively by observing that $X_i$ can be constructed as the cofibre of the map $\zeta_{i-1}:\Sigma^{-1}\SP^{h\ZZ} \to X_{i-1}$ given by hitting the generator in $\pi_{-1}$.

$\zeta_{i}$ extends to a self map of $\zeta_i:\Sigma^{-1}X_i \to X_i$ corresponding to $\beta_{x^i}$\footnote{Explicitly, this map is the composite $X_i \to \Sigma X_{2i} \to \Sigma X_{i}$, where we view $X_i$ as the first and last $i$ cells of $X_{2i}$.}, which we give the same name. Thus we have proven:

\begin{lemma}
	$K_1(\SP^{h\ZZ},\SP)$ is generated by $[g_n]$, where $g_n$ is the automorphisms $g_n=1+\eta \zeta_i:X_i \to X_i$. 
\end{lemma}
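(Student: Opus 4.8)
The goal is to identify explicit automorphisms of the cellular modules $X_i$ over $\SP^{h\ZZ}\simeq\SP[\epsilon_{-1}]$ that generate the relative group $K_1(\SP^{h\ZZ},\SP)$. The strategy is already laid out in the paragraphs preceding the statement: use \Cref{lem:walderrorterm} to identify $K_1(\SP[\epsilon_{-1}],\SP)$ as the cokernel of $\ZZ/2[x]\to\ZZ/2[x^{\pm1}]$ (the relevant $\mathrm{HH}_0$ terms), so that it is generated by the boundary images of the classes $\eta x^{-i}\in\pi_2 K(\SP[x^{\pm1}])$. Then apply \Cref{lem:boundarymap} to the localization sequence $\Mod(\SP[\epsilon_{-1}])\to\Mod(\SP[x])\to\Mod(\SP[x^{\pm1}])$, with $d=\SP[x]$ and $f=x^i$: the cofibre of $f$ is $\SP[x]/x^i$, which lies in $\Mod(\SP[\epsilon_{-1}])$, so the lemma computes $\delta(\eta x^{-i})$ as the class of the automorphism obtained by taking the horizontal cofibre of the square with top and bottom $x^i$ and vertical connecting $2$-cell $\eta$.

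First I would run the bookkeeping of the long exact sequence in $K$-theory for that localization sequence, using \Cref{lem:walderrorterm} applied to the $1$-connective maps $\SP[x]\to\pi_0\SP[x]=\ZZ[x]$ and similarly for $\SP[x^{\pm1}]$, to pin down that $K_2(\SP[x^{\pm1}],\ZZ[x^{\pm1}])\cong\ZZ/2[x^{\pm1}]$ with generators the conjugation classes $\eta x^{-i}\in\pi_2\mathrm{B}\Aut(\SP[x^{\pm1}])$, and that these surject onto the quotient defining $K_1(\SP[\epsilon_{-1}],\SP)$. Next I would verify the hypothesis of \Cref{const:boundarycomputation}: the component of $\Map_{\SP[x]}(\SP[x],\SP[x])$ containing $x^i$ receives a lift of $\eta x^{-i}\in\pi_1\Aut(\SP[x^{\pm1}])$ — concretely, post-compose $x^i\colon\SP[x]\to\SP[x]$ with the automorphism $1+\eta$ of the target — and its cofibre is $\SP[x]/x^i\in\Mod(\SP[\epsilon_{-1}])$. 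Then \Cref{lem:boundarymap} hands over the automorphism $g_i$ of $\cof(x^i)=\SP[x]/x^i$ as the horizontal cofibre of the displayed square, and commutativity of that diagram (up to sign) says $[g_i]=\pm\delta(\eta x^{-i})$ in $K_1(\SP[\epsilon_{-1}],\SP)$, so the $[g_i]$ generate.

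Then I would identify these modules and maps in the $\SP^{h\ZZ}$ picture. Under Morita equivalence $\SP[x]/x^i$ corresponds to $X_i$, which is built by the cellular filtration $X_i=\cof(\zeta_{i-1}\colon\Sigma^{-1}\SP^{h\ZZ}\to X_{i-1})$. The element $\beta_{x^i}\in\pi_{-1}\End(X_i)$ corresponds to a self map $\zeta_i\colon\Sigma^{-1}X_i\to X_i$ (explicitly the composite $X_i\to\Sigma X_{2i}\to\Sigma X_i$ using the first/last $i$ cells of $X_{2i}$), and the computation $g_1=1+\beta_x\eta$ is forced because $g_1$ is a nontrivial element of $\pi_0\Aut(\cof(x))=\pi_0\Aut(\SP/\eta$-type object), whose only nontrivial possibility is $1+\beta_x\eta$; the general case $g_i=1+\beta_{x^i}\eta$ follows by the same argument run with $\SP[x^i]$ in place of $\SP[x]$, i.e.\ using that $\eta x^{-i}$ already lies in the image of the localization sequence for $\SP[x^i]$. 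Assembling, $g_i=1+\eta\zeta_i\colon X_i\to X_i$, and these generate $K_1(\SP^{h\ZZ},\SP)$.

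**Main obstacle.** The technical heart is the application of \Cref{lem:boundarymap}: one must check carefully that the square whose horizontal cofibre gives $g_i$ is exactly the one produced by the lemma — i.e.\ that the $2$-cell appearing there is $\eta$ and not some twist — and that the lift of $\eta x^{-i}$ to the $x^i$-component of $\Map_{\SP[x]}(\SP[x],\SP[x])$ is the correct one (there is a genuine subtlety, flagged in the proof of \Cref{lem:boundarymap}, about the maps $\Sigma d\to\Sigma\pi d$ being multiplication by $f$ rather than the canonical ones). The other delicate point is the identification $g_i=1+\beta_{x^i}\eta$ for general $i$: nontriviality alone pins down $g_1$, but for $i>1$ one genuinely needs the naturality/reduction to $\SP[x^i]$ to rule out other classes in $\pi_0\Aut(X_i)$, and one should make sure $\pi_0\Aut(X_i)$ has no further ambiguity (it is a unipotent-type group with the only relevant nontrivial element being $1+\beta_{x^i}\eta$ because $\eta^2$-contributions and higher cells are excluded for degree reasons).
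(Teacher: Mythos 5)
Your proposal follows essentially the same route as the paper: identify $K_1(\SP[\epsilon_{-1}],\SP)$ via \Cref{lem:walderrorterm} as $\coker(\ZZ/2[x]\to\ZZ/2[x^{\pm1}])$, lift the generators $\eta x^{-i}$ to the $x^i$-component of $\Map_{\SP[x]}(\SP[x],\SP[x])$, apply \Cref{lem:boundarymap} to read off $g_i$ as the horizontal cofibre, pin down $g_1=1+\beta_x\eta$ by nontriviality and deduce $g_i=1+\beta_{x^i}\eta$ by passing to the localization sequence for $\SP[x^i]$, then translate $\SP[x]/x^i\leftrightarrow X_i$ and $\beta_{x^i}\leftrightarrow\zeta_i$. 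The only small inaccuracy is your parenthetical description of $\cof(x)$ as a ``$\SP/\eta$-type object'' (it is $X_1=\SP^{h\ZZ}$, whose $\pi_0\Aut$ contains the relevant unipotent element $1+\eta\zeta_1$ because $\pi_0\SP^{h\ZZ}\cong\ZZ\oplus\ZZ/2\{\eta\zeta\}$), but this does not affect the argument.
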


The map $K_1(\SP^{h\ZZ},\SP) \to K_1({j_{\zeta}}^{h\ZZ},\ZZ_2^{h\ZZ})$ is an isomorphism (see \Cref{cor:jdgm}), so the latter is generated by automorphisms of the same name. By \Cref{thm:type2}, the image of these class along the map $K_1({j_{\zeta}}^{h\ZZ}) \to K_1(L_{K(1)}\SP_2) \simeq K_1(L_1^f\SP_2) \to K_0(\Sp^{\omega}_{\geq2})$ are the generators of $2$-torsion classes in $K_0$. 

To actually compute the representatives in $K_0$, we first observe that these $2$-torsion classes are in the kernel of the map $K_1(L_{K(1)}\SP) \to K_1(L_{K(1)}\SP\otimes \QQ)$, because $\eta$ is. It follows from the localization sequence that these classes lift to $K_1(\Sp_{T(1)}^{\omega})$. To actually produce lifts, we observe that the diagram below (thought of in $\Mod(L_{K(1)}\SP^{\omega})$) commutes up to homotopy:

\begin{center}
	\begin{tikzcd}
		X_i\ar[r,"1"]\ar[d,"2"] & X_i\ar[d,"2"]\\
	X_i	\ar[r,"g_i"] & X_i
	\end{tikzcd}
\end{center}

Let $\bar{g_i}$ denote an automorphism of $X_i/2$ obtained by taking vertical cofibres. Since $1$ represents the trivial element of $K_1$ and the class is $2$-torsion, $\bar{g}_i$ represents the same $K_1$-class as $\bar{g_i}$ by additivity of $K$-theory, but constitutes a lift to $K_1(\Sp_{T(1)}^{\omega})$. Note that $\bar{g}_i$ is \textit{not} $g_i\otimes \cof 2$, as the latter has trivial $K_1$ class, as $\cof2$ is $0$ in $K_0(L_{K(1)}\SP)$.

The desired $K_0$ classes are then obtained as the image via the map $K_1(\Sp_{T(1)}^{\omega}) \to K_0(\Sp_{\geq2})$ from the localization sequence $\Sp_{\geq2} \to \Sp_{\geq 1} \to \Sp_{T(1)}^{\omega}$. This can be again computed by \Cref{lem:boundarymap}, but we need to use a trick to account for the fact that the self map we need to cofibre by is no longer in degree $0$. This trick is an instantiation of the rotation invariance phenomenon studied in \cite{rotation}.

\begin{lemma}\label{lem:rotationinvariance}
	Let $C$ be a stable category. Then the map $\Uloc(C) \to \Uloc(C[x_2^{\pm1}])$ is naturally a split inclusion.
\end{lemma}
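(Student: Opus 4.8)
The plan is to recognise $C[x_2^{\pm1}]=\SP[x_2^{\pm1}]\otimes C$ as the twisted Laurent extension of $C$ along the autoequivalence $\Sigma^2$, and then run a Bass--Heller--Swan ``fundamental theorem'' argument, exploiting that $\Sigma^2$ acts as the identity on $\Uloc$. Indeed, as a spectrum $\SP[x_2^{\pm1}]=\bigoplus_{n\in\ZZ}\Sigma^{2n}\SP$ with multiplication by $x_2$ given by the shift $\Sigma^2$, so an $\SP[x_2^{\pm1}]\otimes C$-object should be thought of as a pair $(M,\psi\colon M\xrightarrow{\sim}\Sigma^2M)$ internal to $C$; that is, $C[x_2^{\pm1}]$ is obtained from $C$ by the standard Laurent construction applied to the autoequivalence $\Sigma^2$ of $C$. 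Since suspension induces $-\mathrm{id}$ on any localizing invariant, $\Uloc(\Sigma^2)=\mathrm{id}$, and this is exactly the input that the fundamental theorem converts into the desired splitting.

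Concretely I would proceed as follows. Inverting $x_2$ inside $\SP[x_2]\otimes C$ yields a localization sequence whose fibre term is the thick subcategory of $x_2$-power-torsion objects, generated by $(\SP[x_2]/x_2)\otimes C$; by Morita theory this term is $\bigl(\End_{\SP[x_2]}(\SP[x_2]/x_2)\bigr)\otimes C\simeq(\SP\oplus\Sigma^{-3}\SP)\otimes C$, the Koszul dual of $\SP[x_2]$ tensored with $C$ (compare the degree-zero case $\SP\oplus\Sigma^{-1}\SP=\SP^{h\ZZ}$ used earlier in the paper). Applying $\Uloc$ to this sequence and to its mirror for $x_2^{-1}\in\SP[x_2^{\pm1}]$, and using that the augmentations $\SP[x_2]\to\SP$ and $\SP[x_2^{-1}]\to\SP$ are genuine ring maps which split off the resulting $N\Uloc$-summands, one assembles — exactly as in the classical Bass fundamental theorem — a natural cofibre sequence
\[
\Uloc(C)\ \xrightarrow{\ \mathrm{id}-\Uloc(\Sigma^2)\ }\ \Uloc(C)\ \longrightarrow\ W,
\]
where $W$ is a natural direct summand of $\Uloc(C[x_2^{\pm1}])$ through which the map of the lemma factors (via the ``target'' copy of $\Uloc(C)$). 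Since $\mathrm{id}-\Uloc(\Sigma^2)=0$, this exhibits $\Uloc(C)$ as a natural retract of $W\simeq\Uloc(C)\oplus\Sigma\Uloc(C)$, hence of $\Uloc(C[x_2^{\pm1}])$, which is the lemma. In the terminology of \cite{rotation} this is the rotation-invariance statement for the autoequivalence $\Sigma^2$ of $C$, so I would cite that reference for the fundamental-theorem mechanism; the only point specific to our situation to be checked is that the period is even, so that $\Uloc(\Sigma^2)=\mathrm{id}$.

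The main obstacle is precisely the Bass-type bookkeeping hidden in that last cofibre sequence: a priori the natural map $\Uloc(C)\to\Uloc(C[x_2^{\pm1}])$ only splits once one verifies that the contributions of the $x_2$-torsion term $(\SP\oplus\Sigma^{-3}\SP)\otimes C$ coming from the $x_2$-side and the $x_2^{-1}$-side cancel against each other. This term is not harmless: it is \emph{not} covered by the dévissage theorem \Cref{thm:devissage} (since $\SP\oplus\Sigma^{-3}\SP$ is not coconnective, its $\pi_1$ being $\ZZ/2$), and it is the same kind of ring whose $\Uloc$ makes $\Uloc(\SP^{h\ZZ})$ a genuinely interesting object in the rest of the paper. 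Re-deriving the required cancellation from scratch would constitute the bulk of a self-contained proof, so I would instead invoke \cite{rotation}, where this is carried out in the necessary generality.
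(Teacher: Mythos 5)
Your first paragraph isolates the correct operative fact --- $\Uloc(\Sigma^2)=\mathrm{id}$, so the lemma is a form of rotation invariance --- and you correctly identify the localization sequence $C[x_2]^{x_2\text{-nil}}\to C[x_2]\to C[x_2^{\pm1}]$ as the place to look, as well as the Koszul dual $\End_{\SP[x_2]}(\SP[x_2]/x_2)\simeq\SP\oplus\Sigma^{-3}\SP$ generating the torsion piece. But what you actually write is not a proof: you reduce to a two-sided Bass--Heller--Swan decomposition, flag the cancellation of the $x_2$-side and $x_2^{-1}$-side torsion contributions as the real content, and then explicitly defer that step to \cite{rotation}. That is the gap. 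The lemma you are asked to prove \emph{is} essentially the rotation-invariance statement, so outsourcing the cancellation to a BHS mechanism is the thing that needs to be justified, not a black box you can invoke.

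The point you are missing is that none of that bookkeeping is necessary, because one never needs to compute or cancel $\Uloc$ of the $x_2$-torsion category. The augmentation $\SP[x_2]\to\SP$ makes $C$ a retract of $C[x_2]$, so $\Uloc(C)$ is naturally a retract of $\Uloc(C[x_2])$, and by the localization sequence it suffices to show that the single composite
\[
F\colon C[x_2]^{x_2\text{-nil}}\longrightarrow C[x_2]\longrightarrow C
\]
is naturally null on $\Uloc$. On a module $M$ this functor is $M\otimes_{\SP[x_2]}\SP\simeq\cof(\Sigma^2M\xrightarrow{\,x_2\,}M)$, so there is a natural cofibre sequence of functors $\Sigma^2U\xrightarrow{\,x_2\,}U\to F$ with $U$ the forgetful functor $C[x_2]^{x_2\text{-nil}}\to C$. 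Additivity of $\Uloc$ gives $\Uloc(F)=\Uloc(U)-\Uloc(\Sigma^2\circ U)=\bigl(\mathrm{id}-\Uloc(\Sigma^2)\bigr)\circ\Uloc(U)=0$, precisely because $\Uloc(\Sigma^2)=\mathrm{id}$. So the retraction $\Uloc(C[x_2])\to\Uloc(C)$ factors through $\Uloc(C[x_2^{\pm1}])$ and the splitting follows, with no need for the $x_2^{-1}$-side, the $N\Uloc$-terms, or anything from \cite{rotation}. Your own key observation closes the argument once you apply it to the right cofibre sequence of functors rather than to the category.
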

\begin{proof}
	There is a localization sequence $C[x_2]^{x_2-\text{nil}} \to C[x_2] \to C[x_2^{\pm 1}]$. Because $C$ is canonically a retract of $C[x_2]$, it will suffice to show that the map $F:C[x_2]^{x_2-\text{nil}} \to C[x_2] \to C$ is naturally null on $\Uloc$. There is a cofibre sequence of functors in $\Fun(C[x_2]^{x_2-\text{nil}})$
	
	\begin{center}
		\begin{tikzcd}
			\Sigma^2U\ar[r,"x_2"] &U \ar[r] & F
		\end{tikzcd}
	\end{center}
	where $U$ is the underlying functor, and $x_2$ is the natural transformation given by multiplication by $x_2$. Since $\Uloc$ is additive and $\Sigma^2U$ and $U$ induce the same map after applying $\Uloc$, it follows that $F$ is null upon applying $\Uloc$.

\end{proof}

Out next goal is to lift $X_i\otimes \cof 2$ to $\Sp_{\geq1}$. Recall that $X_i$ was constructed as the cofibre of a map $\zeta_{i-1}:\Sigma^{-1}L_{K(1)}\SP \to X_{i-1}$. Let us fix a $v_1$-self map on $\cof2$, so that a power of $v_1$ will indicate a power of that particular self map. After tensoring with $\cof 2$, we can lift maps from $\Sp_{T(1)}^{\omega}$ to $\Sp_{\geq 1}$ after sufficient composition with $v_1$. Thus we can inductively construct finite type $1$ spectra $\tilde{X}_i$ such that its $T(1)$-localization is $X_i\otimes \cof 2$ and it has a lift of $v_1^{?}\zeta_i$ so that the cofibre is $\tilde{X}_{i+1}$.

Choose a $v_1$ self map on $\tilde{X}_{i+1}$, and note that $\bar{g}_iv_1^{j_i}$ lifts to a self map $\tilde{g}_i$ of $\tilde{X}_{i+1}$ for $j_i$ sufficiently large. 

\begin{proposition}
	The boundary map $K_1(\Sp_{T(1)}^{\omega}) \to K_0(\Sp_{\geq2})$ sends $\bar{g}_i$ to $[\cof \tilde{g}_i] - [\cof v_1^{j_i}]$, where the cofibres are taken as self maps of $\tilde{X_i}$. Thus these are a basis of the $2$-torsion of $K_0(\Sp_{\geq2})$, and these along with $\cof(2,\eta,v_1)$ generate $K_0(\Sp_{\geq 2})$.
\end{proposition}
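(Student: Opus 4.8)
The plan is to apply \Cref{lem:boundarymap} to the localization sequence $\Sp_{\geq2}^{\omega} \to \Sp_{\geq1}^{\omega} \to \Sp_{T(1)}^{\omega}$, using the self map $\tilde g_i$ on $\tilde X_{i+1}$ whose cofibre lands in $\Sp_{\geq2}^{\omega}$. The one wrinkle is that $\tilde g_i$ is (after lifting) a degree-$0$ self map of $\tilde X_{i+1}$ in the localized category, but the element $\bar g_i$ we want to hit lives in $K_1(\Sp_{T(1)}^{\omega})$ via a class built from a self map that is not literally in degree $0$ once we pass through $v_1^{j_i}$; we must account for the shift. This is exactly the rotation-invariance phenomenon of \Cref{lem:rotationinvariance}: first I would record that, since $v_1$ is invertible in $\Sp_{T(1)}^{\omega}$, working with $\bar g_i v_1^{j_i}$ computes the same $K_1$-class as $\bar g_i$, because $v_1$ acts as an even-degree invertible self map and hence induces the identity on $\Uadd(\Sp_{T(1)}^{\omega})$ (so multiplying an automorphism by a power of $v_1$ doesn't change its class in $K_1$). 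Concretely, the diagram $X_i \xrightarrow{2} X_i$ under $g_i$ is homotopy-commutative in $\Mod(L_{K(1)}\SP)^{\omega}$, its vertical cofibre $\bar g_i$ is an automorphism of $X_i/2$ with the same $K_1$-class, and this whole picture lifts (after composing with enough $v_1$) to $\Sp_{\geq1}^{\omega}$, producing $\tilde g_i$ on $\tilde X_{i+1}$ together with the comparison automorphism $v_1^{j_i}$.

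Next I would feed this into \Cref{lem:boundarymap} with $C = \Sp_{\geq2}^{\omega}$, $D = \Sp_{\geq1}^{\omega}$, $E = \Sp_{T(1)}^{\omega}$, $d = \tilde X_{i+1}$, and $f = \tilde g_i$ (whose cofibre is type $2$, hence in $C$). The lemma says the cofibre of $\tilde g_i$, viewed as an element of $\pi_0 \mathrm B\Aut(\cof \tilde g_i) \to \Omega^\infty K(C)$, agrees up to sign with $\delta$ applied to $\pi(\tilde g_i) \in \Aut(\pi d)$, which is precisely $\bar g_i v_1^{j_i} = \bar g_i$ in $K_1(\Sp_{T(1)}^{\omega})$. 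The subtlety that the lemma is stated for a self map and its literal cofibre is handled by noting that additivity lets us subtract the contribution of the comparison map $v_1^{j_i}$: the self map $v_1^{j_i}$ on $\tilde X_{i+1}$ has cofibre $\cof v_1^{j_i}$, which is itself a type $2$ spectrum, and its $K_1$-class in $\Sp_{T(1)}^{\omega}$ is trivial (it is $v_1^{j_i}$, an even self map, hence zero in $K_1$). Hence $\delta(\bar g_i) = [\cof \tilde g_i] - [\cof v_1^{j_i}]$ in $K_0(\Sp_{\geq2}^{\omega})$, with both cofibres taken of self maps of $\tilde X_{i+1}$ (the statement writes $\tilde X_i$ for what the construction calls $\tilde X_{i+1}$; I would just align notation).

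To conclude that these are a basis of the $2$-torsion, I would combine three inputs already established: (i) by \Cref{thm:type2}, the $2$-torsion of $K_0(\Sp_{\geq2}^{\omega})$ is $\bigoplus_0^\infty \ZZ/2 \cong \pi_0 X$, and this torsion is identified with the cokernel computation coming from $\pi_1 K(\ell_p \odot \cdots)$ — i.e. with the classes $g_n$ exhibiting $K_1(\SP^{h\ZZ},\SP)$, which map isomorphically to $K_1(j_\zeta^{h\ZZ},\ZZ_2^{h\ZZ})$ by \Cref{cor:jdgm}; (ii) the map $K_1(j_\zeta^{h\ZZ}) \to K_1(L_{K(1)}\SP_2) \simeq K_1(L_1^f\SP_2)$, followed by the boundary $K_1(L_{K(1)}\SP) \to K_0(\Sp_{\geq2}^{\omega})$ of the localization sequence, carries the $g_n$ to the $2$-torsion generators (this is the content of the discussion preceding the proposition, using that $\eta$ is $2$-torsion and rationally trivial so these classes lift through $\Sp_{T(1)}^{\omega}$); (iii) the rotation-invariance argument above shows the $\Sp_{T(1)}^{\omega}$-lift of each $g_n$ can be taken to be $\bar g_n$, whose image under the boundary $K_1(\Sp_{T(1)}^{\omega}) \to K_0(\Sp_{\geq2}^{\omega})$ we have just computed. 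So the $[\cof \tilde g_i] - [\cof v_1^{j_i}]$ are the images of a generating set of the $2$-torsion and hence a basis. Finally, adjoining $[\SP/(2,\eta,v_1)] = [\cof(2,\eta,v_1)]$, which generates the torsion-free quotient $\ZZ$ by \Cref{thm:type2}, gives a generating set of all of $K_0(\Sp_{\geq2}^{\omega}) = \ZZ \oplus \bigoplus_0^\infty \ZZ/2$.

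\textbf{Main obstacle.} The routine parts are the long exact sequences and the bookkeeping of which $\tilde X$ is which; the genuinely delicate step is justifying the degree shift — that the $K_1$-class of $\bar g_i$ is computed by the degree-$0$ self map $\tilde g_i = \bar g_i v_1^{j_i}$ on the nose, with the $v_1^{j_i}$ twist being invisible to $\Uadd$/$K_1$ — and then threading this through \Cref{lem:boundarymap}, whose hypotheses are literally about a self map $f$ with $\cof f \in C$ rather than about an arbitrary $K_1$-class. Making (i)–(iii) fit together — i.e. checking that the isomorphism $K_1(\SP^{h\ZZ},\SP) \cong \pi_0 X$ of \Cref{thm:type2} really matches the $g_n$ to the torsion classes under all the identifications $L_1^f \simeq L_{K(1)}$, $j_\zeta^{h\ZZ}$ vs.\ $\ZZ_2^{h\ZZ}$, etc.\ — is where one has to be careful that no sign or reindexing is lost.
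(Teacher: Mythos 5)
Your overall plan — split the later claims off via Theorem~D, apply \Cref{lem:boundarymap} to the localization sequence $\Sp_{\geq2}^{\omega}\to\Sp_{\geq1}^{\omega}\to\Sp_{T(1)}^{\omega}$, and handle the degree shift by \Cref{lem:rotationinvariance} — is exactly the paper's strategy. But the central additivity step contains a genuine error that would make the formula come out wrong if you followed your own reasoning. You assert that the (degree-shifted) automorphism $v_1^{j_i}$ of $X_i\otimes\cof 2$ has trivial class in $K_1(\Sp_{T(1)}^{\omega})$ ``since it is an even self map.'' This is false, and if it were true your argument would give $\delta(\bar g_i) = [\cof\tilde g_i]$ with no correction term — not the asserted $[\cof\tilde g_i]-[\cof v_1^{j_i}]$. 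In fact $[\cof v_1^{j_i}]$ is a type-$2$ spectrum with nonzero $\chi_{\BP\langle1\rangle}$, hence nonzero in $K_0(\Sp_{\geq2}^{\omega})$, so its preimage under $\delta$ (which is what $[v_1^{j_i}]$ is, by \Cref{lem:boundarymap} applied to the self map $v_1^{j_i}$) cannot vanish in $K_1$. The correct bookkeeping, which is what the paper does, is to use multiplicativity of $K_1$-classes: writing $u_1$ for the degree-$0$ version of the $v_1$-self map, $u_1^{j_i}$ is an automorphism, and $[\bar g_i] = [\bar g_i u_1^{j_i}] - [u_1^{j_i}]$. Then \Cref{lem:boundarymap} is applied separately to each term, giving $\delta[\bar g_i] = [\cof\tilde g_i] - [\cof v_1^{j_i}]$. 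Neither term on the right vanishes; the subtraction is not a nullity statement but an instance of $[ab]=[a]+[b]$.

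Relatedly, you feed $f=\tilde g_i$ into \Cref{lem:boundarymap} directly, but $\tilde g_i$ is a self map of nonzero degree $j_i|v_1|$, and the lemma as stated takes a degree-$0$ endomorphism $f\colon d\to d$. The paper deals with this head-on: by \Cref{lem:rotationinvariance} it suffices to compute after tensoring the localization sequence with $\SP[x_2^{\pm1}]$, in which category the $v_1$-self map can be shifted to a genuine degree-$0$ automorphism $u_1$; then both $\bar g_i u_1^{j_i}$ and $u_1^{j_i}$ lift to degree-$0$ self maps of $\tilde X\otimes\SP[x_2^{\pm1}]$, \Cref{lem:boundarymap} applies literally, and since $x_2$ is an automorphism it can be ignored when taking cofibres, so the answer descends to $\Sp$. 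Your appeal to rotation invariance gestures at this but does not perform the tensoring, which is what actually makes the application of \Cref{lem:boundarymap} legitimate.
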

\begin{proof}
	The later statements follow from the claim about the boundary map by applying \Cref{thm:type2}, so we will just prove the claim about the boundary map. By \Cref{lem:rotationinvariance}, it suffices to do so after tensoring the localization sequence with $\SP[x_2^{\pm1}]$. Let $u_1$ denote the $v_1$-self map of $X_i\otimes \cof 2$, except shifted into degree $0$. It is an automorphism so we have $[\bar{g}_i] = [\bar{g}_iu_1^{j_i}]-[u_1^{j_i}]$. But $\bar{g}_iu_1^{j_i}$ and $u_1^{j_i}$ lift to self maps of $\tilde{X}$, so by applying \Cref{lem:boundarymap} and observing that $x_2$ is an automorphism so can be ignored when taking cofibres, we learn that the boundary is $[\cof \tilde{g}_i ] - [\cof v_1^{j_i}]$.
	\end{proof}

We now run this consruction explicitly for $g_1$. Here $\cof 2$ has a $v_1^4$-self map, $g_1$ is $1+\eta \zeta$, and $\sigma$ is a lift of $\zeta v_1^4$. since $\eta \sigma$ is $2$-torsion, so we can form the map $\overline{\eta \sigma}: \Sigma^8\SP/2 \to \SP \to \SP/2$, where the first map in the composite is given by a nulhomotopy of $2\eta \sigma$. $\tilde{g}_1$ is then given by the automorphism of $\cof 2$ named $v_1^4+\overline{\eta \sigma}$, so the first $2$-torsion $K_0$ class is $[\SP/(2,v_1^4+\overline{\eta\sigma})] - [\SP/(2,v_1^4)]$.

Even though we can explicitly construct representing $K_0$ classes, the computation of $K_0(\Sp_{\geq2})$ remains somewhat inexplicit. For example, given a type $2$ spectrum $X$, where $p>2$, it does not obviously give a way of building $X$ out of $\SP/(p,v_1)$ via cofibre sequences. On the other hand, any type $1$ spectrum has an explicit way of being built out of $\SP/p$: namely its cell decomposition naturally decomposes into Moore spectra. Thus we ask:

\begin{question}\label{qst:explicit}
	Is there an explicit way, given a type $2$ spectrum, to build it out of representatives of the generating $K_0$ classes via cofibre sequences?
\end{question}

\begin{question}\label{qst:p2explicit}
	At the prime $2$, given a type $2$ spectrum, is there a way of computing its $K_0$ class?
\end{question}

One possible approach to \Cref{qst:p2explicit} would be to try to understand the isomorphisms $K_0(\Sp_{\geq 2}) \cong \im(K_1(\Sp_{K(1)}^{\omega}))\cong  K_1(\Sp_{K(1)}^{\omega})/K_1(\Sp_{\geq1})$.

\section{Euler characteristics}\label{sec:eulerchar}
We turn to studying the torsion free part of $K_0(\Sp_{\geq n})$, and use \Cref{thm:compactk1local} an answer to \cite[Problem 16.4]{hovey1999morava} at height $1$. We study natural homomorphisms out of this torsion free part called Euler characteristics. We explain how the image of the Euler characteristic $\chi_{\BP\langle n\rangle}$ is an obstruction to small type $n+1$ spectra such as Smith--Toda complexes existing. Using the existence of spectra such as $\ko$ and $\tmf$, we compute the image of $\chi_{\BP\langle n \rangle}$ at heights $\leq 2$, and conjecture what the answer is in general.

Recall from \cite{mahowaldrezk} that a spectrum $X$ is said to be fp of type at most $n$ if it is bounded below, $p$-complete, and $X\otimes Y$ is $\pi$-finite for any $Y \in \Sp_{\geq n+1}$. Let $\fp_{n}$ denote the full subcategory of $\Sp$ consisting of fp spectra of type at most $n$. Note that $\fp_{-1}$ is the category of $p$-torsion $\pi$-finite spectra.

\begin{lemma}\label{lem:fp-1}
	$K(\fp_{ -1}) \simeq K(\FF_p)$ and $K(\fp_{0}) \simeq K(\ZZ_p)$.
\end{lemma}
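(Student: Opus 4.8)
Both statements will follow by dévissage, directly in the first case and via a localization sequence in the second.

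For $\fp_{-1}$: the Postnikov $t$-structure of $\Sp$ restricts to $\fp_{-1}$ (the category of $p$-torsion $\pi$-finite spectra is closed under truncation), and since every object has only finitely many nonzero homotopy groups, each a finite abelian $p$-group, this restricted $t$-structure is \emph{bounded} with heart the abelian category $\mathcal{A}$ of finite abelian $p$-groups. By the theorem of the heart for $K$-theory (see \cite{kcoconn}) we get $K(\fp_{-1}) \simeq K(\mathcal{A})$, and since every finite abelian $p$-group has a composition series with graded pieces $\FF_p$, Quillen's classical dévissage theorem gives $K(\mathcal{A}) \simeq K(\FF_p)$.

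For $\fp_{0}$ the $t$-structure is only bounded below (e.g.\ $\SP_p \in \fp_{0}$), so I would instead use a localization sequence. First, $\fp_{-1}$ is exactly the full subcategory of $\fp_{0}$ of rationally trivial objects: if $X \in \fp_0$ and $X \otimes \QQ = 0$ then $\pi_*X$ is $p$-power torsion (being $p$-complete with rationally trivial homotopy), and $X \otimes \SP/p$ is $\pi$-finite because $\SP/p$ has type $\geq 1$, so a connectivity/Bockstein argument forces $X$ to be $\pi$-finite, i.e.\ $X \in \fp_{-1}$. Second, rationalization carries $\fp_0$ into $\Mod(\QQ_p)^{\omega}$, and I claim the induced functor $\fp_0/\fp_{-1} \to \Mod(\QQ_p)^{\omega}$ is an equivalence after idempotent completion: essential surjectivity and fullness come from a clearing-denominators argument (every perfect $\QQ_p$-module, and every map between rationalizations, is realized up to a power of $p$ in $\fp_0$, using that $\fp_0$ is closed under cofibres and contains $\SP_p$, so that multiplication by $p$ becomes invertible in the quotient), and faithfulness follows from the identification of the kernel above. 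Applying the localization theorem for $K$-theory then yields a cofibre sequence $K(\FF_p) \simeq K(\fp_{-1}) \to K(\fp_0) \to K(\QQ_p)$.

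To identify the middle term I would compare with the analogous sequence for $\Mod(\ZZ_p)^{\omega}$. A perfect $\ZZ_p$-module is bounded below, $p$-complete, and has $\pi$-finite tensor product with every finite spectrum of type $\geq 1$, hence lies in $\fp_0$; this gives a fully faithful, rationalization-compatible functor $\Mod(\ZZ_p)^{\omega} \hookrightarrow \fp_0$ carrying the $p$-power-torsion objects $\Mod(\ZZ_p)^{\omega}_{\mathrm{tors}}$ into $\fp_{-1}$, and therefore a map of localization sequences from $\bigl[\Mod(\ZZ_p)^{\omega}_{\mathrm{tors}} \to \Mod(\ZZ_p)^{\omega} \to \Mod(\QQ_p)^{\omega}\bigr]$ to $\bigl[\fp_{-1} \to \fp_0 \to \Mod(\QQ_p)^{\omega}\bigr]$. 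The right-hand vertical map is the identity, and the left-hand one is a $K$-equivalence because $\Mod(\ZZ_p)^{\omega}_{\mathrm{tors}}$ and $\fp_{-1}$ both carry bounded $t$-structures with heart $\mathcal{A}$, compatibly with the inclusion, so by the theorem of the heart it is the identity of $K(\FF_p)$. Hence the middle vertical map $K(\ZZ_p) = K(\Mod(\ZZ_p)^{\omega}) \to K(\fp_0)$ is an equivalence. The step I expect to be the real work is the identification $\fp_0/\fp_{-1} \simeq \Mod(\QQ_p)^{\omega}$: it rests on the concrete properties of fp spectra of type $\leq 0$ (closure under cofibres, and the identification of the rationally trivial ones with $\pi$-finite $p$-torsion spectra) together with the clearing-denominators argument for maps; once that is in place everything else is formal, using the theorem of the heart, Quillen dévissage, and the localization theorem in $K$-theory.
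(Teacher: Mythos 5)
Your treatment of $\fp_{-1}$ matches the paper's: bounded $t$-structure with heart finite abelian $p$-groups, then theorem of the heart and Quillen d\'evissage.

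For $\fp_0$, however, you part ways with the paper based on a claim that is false: $\SP_p$ is \emph{not} in $\fp_0$. By definition $X \in \fp_0$ requires $X \otimes Y$ to be $\pi$-finite for every $Y$ of type $\geq 1$, and $\SP_p \otimes \SP/p \simeq \SP/p$ has infinitely many nonzero homotopy groups. In fact the $t$-structure on $\fp_0$ \emph{is} bounded, and this is the whole point of the paper's argument: if $X \in \fp_0$ then $X/p$ is $\pi$-finite, so above the top degree of $X/p$ multiplication by $p$ is an isomorphism on $\pi_*X$; since a truncation of a bounded-below $p$-complete spectrum is $p$-complete and a $p$-complete spectrum on which $p$ acts invertibly is zero, $X$ is bounded. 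Moreover each $\pi_iX$ is then a derived $p$-complete group with $\pi_iX/p$ finite, hence a finitely generated $\ZZ_p$-module. So the heart is exactly finitely generated $\ZZ_p$-modules and the nonconnective theorem of the heart gives $K(\fp_0) \simeq K(\mathrm{Mod}^{\mathrm{fg}}_{\ZZ_p}) = G(\ZZ_p) \simeq K(\ZZ_p)$ directly (the last step being regularity of $\ZZ_p$), with no localization sequence needed.

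Your detour via the sequence $\fp_{-1} \to \fp_0 \to \fp_0/\fp_{-1}$ and comparison with $\Mod(\ZZ_p)^{\omega}_{\mathrm{tors}} \to \Mod(\ZZ_p)^{\omega} \to \Mod(\QQ_p)^{\omega}$ is not wrong in spirit, and you correctly identify $\fp_{-1}$ as the rationally trivial objects of $\fp_0$. But the key identification $\fp_0/\fp_{-1} \simeq \Mod(\QQ_p)^{\omega}$ after idempotent completion, which you yourself flag as ``the real work,'' is only sketched and is not actually needed once you realize the $t$-structure is bounded. The clean route is the paper's: apply the theorem of the heart on both $\fp_{-1}$ and $\fp_0$ directly.
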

\begin{proof}
	The $t$-structure on spectra is bounded on $\fp_{0}$ and $\fp_{-1}$ with hearts finitely generated discrete $\ZZ_p$-modules and $p$-nil discrete $\ZZ_p$-modules respectively. The result then follows from the nonconnective theorem of the heart \cite{antieau2018ktheoretic} (which is a refinement of \cite{Barwick_2015}), and Quillen's devissage \cite{quillenhigherktheory}.
\end{proof}

\begin{remark}
	We could alternatively have used \Cref{thm:devissage} to prove \Cref{lem:fp-1}, but the method above seems more direct.
\end{remark}

Tensoring sets up a pairing $\chi:K(\fp_n)\otimes K(\Sp_{\geq n+1}) \to K(\fp_{-1}) \simeq K(\FF_p)$, which we call the \textit{Euler characteristic}. If we fix a class $[X] \in K_0(\fp_n)$, we obtain a map $\chi_{X}:K(\Sp_{\geq n+1}) \to K(\FF_p)$. On $\pi_0$, identifying $K_0(\FF_p) \simeq \ZZ$, this map takes $Y$ to $\Sigma_i (-1)^i\log_p |\pi_i(X\otimes Y)|$.

Euler characteristics are in general nontrivial homomorphisms. For example, if $\BP\langle n \rangle$ is the ($p$-completed) truncated Brown--Peterson spectrum, then it is easy to see that on the generalized Moore spectrum $\SP/(p^{i_0},v_1^{i_1},\dots v_n^{i_n})$, $\chi_{\BP\langle n\rangle}$ takes the value $\Pi i_j$. In fact $\chi_{\BP\langle n\rangle}$ is shown in \cite{pinverted} to give an equivalence $K(\Sp_{\geq n+1})[\frac 1 p] \to K(\FF_p)[\frac 1 p]$. In particular, the image of the map on $\pi_0$ integrally is generated by some power of $p$. 

Euler characteristics can be used to obstruct the existence of small type $n$ spectra. For example, because $\ko_2/\eta \cong \ku_2$ and $\ko_2$ and $\ku_2$ are in $\fp_1$, the image of $\chi_{\ku_2}$ is a multiple of $2$ (in fact it is exactly $2$). It follows that the Smith--Toda complex $\SP/(2,v_1)$ cannot exist, because $\chi_{\ku_2}(\SP/(2,v_1))$ would be $1$. This example shows the image of $\chi_{\BP\langle n \rangle}$ is an obstruction to Smith--Toda complexes and other small type $n+1$ spectra from existing. Below we compute the image of $\chi_{\BP\langle n\rangle}$ at low heights.

\begin{proposition}\label{prop:chibpn}
	The table below lists the image of $\chi_{\BP\langle n\rangle}:K_0(\Sp_{\geq n+1}) \to \ZZ$ at some low heights. In all these cases, there exists $X \in \fp_n$ such that $\chi_X$ has image $\ZZ$.
	\begin{center}
\begin{tabular}{ |c|c|c|c|c|} 
	\hline
	&\multicolumn{4}{|c|}{prime}\\
	\hline
	 $n$& $2$ & $3$ & $5$ &$>5$ \\
	\hline
	$0,-1$ & $\ZZ$& $\ZZ$& $\ZZ$&$\ZZ$\\
	$1$& $2\ZZ$ & $\ZZ$ &$\ZZ$ &$\ZZ$\\ 
	$2$& $8\ZZ$ & $3\ZZ$ &$\ZZ$& $\ZZ$ \\ 
	$3$& ?& ?& ?& $\ZZ$\\ 
	\hline
\end{tabular}
\end{center}
\end{proposition}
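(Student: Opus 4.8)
The plan is to reduce each entry to a single nonnegative integer and then determine it by matching a divisibility (the upper bound) against an explicit small complex (the lower bound). Since the target $\ZZ$ is torsion free, $\chi_{\BP\langle n\rangle}$ factors through the torsion-free quotient of $K_0(\Sp_{\geq n+1})$, which by \Cref{thm:pinverted} becomes $\ZZ[\tfrac1p]$ after inverting $p$, with $\chi_{\BP\langle n\rangle}$ inducing an isomorphism there. As the generalized Moore spectra $\SP/(p^{i_0},v_1^{i_1},\dots,v_n^{i_n})$ exist (Hopkins--Smith periodicity) and have positive Euler characteristic $\prod i_j$, the image is a nonzero subgroup of $\ZZ$ whose $p$-localization is all of $\ZZ[\tfrac1p]$, hence equals $p^{k_n}\ZZ$ for a unique $k_n\geq0$. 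The table thus amounts to $k_n=0$ except for $(n,p)\in\{(1,2),(2,3)\}$, where $k_n=1$, and $(n,p)=(2,2)$, where $k_n=3$, together with the existence of $X\in\fp_n$ with $\chi_X$ surjective.

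For $n\in\{-1,0\}$, $\BP\langle n\rangle\in\{\FF_p,\ZZ_p\}$ already lies in $\fp_n$ and $\chi_{\BP\langle n\rangle}(\SP/p)=1$. For $n=1$ with $p$ odd, the Smith--Toda complex $\SP/(p,v_1)$ exists and $\chi_{\BP\langle1\rangle}(\SP/(p,v_1))=1$. For $n=1$, $p=2$: tensoring the cofibre sequence $\Sigma\ko\xrightarrow{\eta}\ko\to\ku$ with a type-$2$ spectrum $Y$ and using $[\Sigma M]=-[M]$ in $K_0(\fp_{-1})=\ZZ$ gives $\chi_{\BP\langle1\rangle}=\chi_{\ku_2}=2\,\chi_{\ko_2}$, so the image lies in $2\ZZ$; since $\ko_2\otimes Z\simeq\FF_2$ for $Z=\SP/(2,\eta,v_1)$ (recorded in the proof of \Cref{prop:K1localsequence}), $\chi_{\ko_2}(Z)=1$, so $\chi_{\ko_2}$ is surjective and $\chi_{\BP\langle1\rangle}$ has image exactly $2\ZZ$. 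For $n=2$ with $p\geq5$, Toda's complex $\SP/(p,v_1,v_2)$ exists with $\chi_{\BP\langle2\rangle}(\SP/(p,v_1,v_2))=1$. The last two entries go through $\tmf$: at $p=2$, $\BP\langle2\rangle_2\simeq\tmf_1(3)$ (Lawson--Naumann), a perfect $\tmf$-module which is finite locally free of rank $8$, the degree of the finite \'etale cover $\mathcal M_1(3)\to\mathcal M_{\mathrm{ell}}$ classifying points of exact order $3$, so $\chi_{\BP\langle2\rangle}=8\,\chi_{\tmf_2}$ and the image lies in $8\ZZ$; at $p=3$ one likewise compares $\BP\langle2\rangle_3$ with $\tmf_3$ through a degree-$3$ level structure to obtain divisibility by $3$. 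For the matching lower bounds and the surjectivity of $\chi_X$ with $X=\tmf$, one exhibits a type-$3$ spectrum $Y$ with $\chi_{\tmf}(Y)$ prime to $p$, invoking known constructions of small $v_2$-self-maps and type-$3$ finite complexes with computed $\tmf$-homotopy.

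The formal arguments dispose of $n\leq1$; the genuine content is at $n=2$, and the main obstacle is the $\tmf$-theoretic input: (i) identifying the relevant form of $\BP\langle2\rangle$ as a perfect $\tmf$-module and computing its rank on the modular stack, which is what forces the factor to be exactly $8$ (resp.\ $3$) rather than a smaller power of $p$; and (ii) producing a type-$3$ spectrum with $\tmf$-Euler characteristic prime to $p$, which requires real information about $\pi_*\tmf$ and about small $v_2$-periodic finite complexes rather than purely formal manipulations. The $n=3$ row is left open because it is tied to the existence --- still unknown at $p\leq5$ --- of sufficiently small type-$4$ complexes such as a $V(3)$.
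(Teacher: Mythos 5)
Your overall reduction is the same as the paper's: use the $p$-inverted result to see that the image is $p^{k}\ZZ$ for some $k$, then match an upper bound (a divisibility forced by expressing $\BP\langle n\rangle$ as an integer multiple of a smaller fp spectrum in $K_0(\fp_n)$) against a lower bound (an explicit small type-$(n+1)$ complex). The entries for $n\leq 1$, the $n=1$, $p=2$ argument via $\ko$, and the use of Toda's $V(n)$ where it exists all track the paper. For $n=2$, $p=2$, your étale-cover computation of the rank is a reasonable alternative to the paper's argument, which instead computes $\tmf_1(3)\otimes_{\tmf}\FF_2$ as the dual of $\cA(2)\sslash E(2)$ (an exterior algebra on three even generators, hence $8$-dimensional and concentrated in even degree) and uses the isomorphism $K_0(\tmf)\cong K_0(\FF_2)$; both give $[\tmf_1(3)]=8\in K_0(\tmf)$.

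The gap is the $n=2$, $p=3$ case, and it propagates to your final claim about surjectivity of $\chi_X$. There is no degree-$3$ level-structure refinement of $\tmf$ at $p=3$ that realizes a form of $\BP\langle 2\rangle$ in the way $\tmf_1(3)$ does at $p=2$ (for instance $\tmf_1(2)$ has two even polynomial generators in the wrong degrees). The paper instead invokes Behrens--Pemmaraju: there is a $v_2^1$-self-map on $Y(2)\otimes V(1)$, where $Y(2)$ is the $8$-skeleton of $\SP\sslash\alpha_1$, and an equivalence $Y(2)\otimes\tmf \simeq \BP\langle 2\rangle\oplus\Sigma^8\BP\langle 2\rangle$. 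Since $Y(2)$ has Euler characteristic $3$, this yields $3[\tmf]=2[\BP\langle 2\rangle]$ in $K_0(\fp_2)$, giving the upper bound $3\ZZ$; and $(Y(2)\otimes V(1))/v_2$ is a type-$3$ spectrum with $\chi_{\tmf}$-value $2$, giving $\chi_{\BP\langle 2\rangle}$-value $3$, hence the lower bound. This also shows your claim that $X=\tmf$ works for surjectivity is wrong at $p=3$: from $3\chi_{\tmf}=2\chi_{\BP\langle 2\rangle}$ and $\mathrm{im}\,\chi_{\BP\langle 2\rangle}=3\ZZ$, the image of $\chi_{\tmf}$ is exactly $2\ZZ$. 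The paper's choice of $X$ at $p=3$ is $\BP\langle 2\rangle\oplus\Sigma\tmf$, whose Euler characteristic takes the value $3-2=1$ on the same type-$3$ spectrum. You should replace the ``likewise'' sketch at $p=3$ with this Behrens--Pemmaraju input, and correct the statement about which $X\in\fp_2$ has surjective $\chi_X$.
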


\begin{proof}
	In all the cases in the table where the answer is $\ZZ$, there exists a Smith--Toda complex $V(n)$ \cite{toda1971spectra}, which has the property that $V(n)\otimes BP\langle n\rangle = \FF_p$, so $1$ is indeed in the image. 
	
	For $n=1,p=2$, we observe that $ko_2$ is a type $1$ fp spectrum such that $ko_2/\eta = BP\langle 1\rangle$ and $ko_2\otimes \SP/(2,\eta,v_1) = \FF_2$. It follows that $2[ko_2] = [BP\langle 1 \rangle ]$ in $K_0(\fp_1)$ and that the image of $\chi_{ko_2}$ is $1$. Thus the image of $\chi_{\BP\langle 1\rangle} = 2\chi_{ko_2}$ is $2$.
	
	For $n=2,p=2$, we use \cite{lawson2012commutativity}, which shows that $2$-adically there is an $\EE_{\infty}$-map $\tmf \to \tmf_1(3) = BP\langle 2\rangle$ realizing at the level of mod $2$ cohomology the quotient $\cA\sslash E(2) \to \cA\sslash \cA(2)$. It follows that $\tmf_1(3)\otimes_{\tmf}\FF_2$ is the dual of $\cA(2)\sslash E(2)$, which is an exterior algebra on three generators in even degrees. Since $\tmf$ is connective with $\pi_0(\tmf) = \ZZ_2$, the map $ K_0(\tmf) \to K_0(\FF_2) = \ZZ$ is an isomorphism, and we learn that $\tmf_1(3)$ is perfect over $\tmf$ and $[\tmf_1(3)]$ is the class $8 \in K_0(\tmf)$. $\tmf$ is fp of type $2$: in \cite{bhattacharya2016class}, a type $2$ spectrum $Z$ was constructed with a $v_2^1$-self map with the property that $Z/v_2\otimes \tmf \simeq \FF_2$. It follows that $\chi_{\tmf}$ has image $\ZZ$, and that $\chi_{\BP\langle 2\rangle} = 8\chi_{\tmf}$.
	
	For $n=2,p=3$, \cite{behrens2004existence} constructed a $v_2^1$ self map on the spectrum $Y(2)\otimes V(1)$, where $Y(2)$ is the $8$-skeleton of $\SP\sslash \alpha_1$. Moreover, there is an equivalence $Y(2)\otimes \tmf \simeq \BP\langle 2 \rangle \oplus \Sigma^{8}\BP\langle 2\rangle$ (see Remark 2.3 of ibid.). It follows that $(Y(2)\otimes V(1))/v_2$ has $\chi_{\tmf}$ equal to $2$, and $3[\tmf] = 2[\BP\langle 2\rangle]$. Thus the image of $\chi_{\BP\langle 2\rangle}$ must be $3\ZZ$, and the image of $\chi_{\BP\langle2\rangle \oplus \Sigma \tmf}$ is $\ZZ$.
\end{proof}

\begin{question}\label{qst:imagechibpn}
	What is the generator of the image of $\chi_{\BP\langle n\rangle}:K_0(\Sp_{\geq n+1}) \to \ZZ$? Is it the size of the maximal finite $p$-subgroup of the height $n$ Morava stabilizer group?
\end{question}

Indeed, in the cases studied in \Cref{prop:chibpn}, the result agrees with the size of the maximal $p$-subgroup of the Morava stabilizer group at that height (see \cite{hewett1995finite}).

\begin{question}\label{qst:dualityfptype}
	Does there always exist an $X$ fp of type $n$ such that $\chi_X:K_0(\Sp_{\geq n+1}) \to \ZZ$ is surjective?
\end{question}

The truth of \Cref{qst:dualityfptype} would suggest that fp spectra and finite spectra are dual in the sense that the extent of the failure of small type $n+1$-spectra to exist corresponds exactly to that of the failure of $\BP\langle n\rangle$ to be a regular fp type $n$ ring closest to the sphere.

Now we turn to answer a question of Hovey--Strickland \cite{hovey1999morava}. They considered two homomorphisms, $\chi, \xi$ out of $K_0(\Sp_{K(n)}^{\omega})$. At height $n$, prime $p$, suppose that $M_*$ is a graded $p$-torsion graded abelian group of periodicity $|v_n|p^i$. Then $\len M_*$ is defined to be $\frac 1 {p^i}\sum_0^{|v_n|p^i-1}\log_p|M_*|$. $\chi:K_0(\Sp_{K(n)}) \to \ZZ$ is defined as $[X] \mapsto \len \pi_{\text{even}}(E(n)_*\otimes X)-\len\pi_{\text{odd}}(E(n)_*\otimes X)$ and $\xi: K_0(\Sp_{K(n)}) \to \ZZ[\frac 1 p]$ is defined as $[X] \mapsto \len \pi_{\text{even}}(X)-\len\pi_{\text{odd}}(X)$. They then ask:

\begin{problem}[{\cite[Problem 16.4]{hovey1999morava}}]\label{qst:hoveystrickland}
	What is the relationship between $\chi$ and $\xi$? Is $\chi$ an isomorphism? If not, is $\QQ\otimes \chi$ an isomorphism? Can one say anything about the higher $K$-theory of $\Sp_{K(n)}^{\omega}$?
\end{problem}

In \Cref{thm:compactk1local}, we described $K(\Sp_{K(1)}^{\omega})$ as a spectrum. We now answer the rest of \Cref{qst:hoveystrickland} for $n=1$.

\begin{proposition}\label{prop:hovstrick}
	For $n=1$, $\xi=0$, and $\chi$ is an isomorphism.
\end{proposition}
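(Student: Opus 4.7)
The plan is to show $K_0(\Sp_{K(1)}^{\omega}) \cong \ZZ$ with an explicit generator, and then to evaluate $\chi$ and $\xi$ on it. For the first step, the cofibre sequence in \Cref{thm:compactk1local}, being split on $\pi_*$, together with $\pi_0\Sigma K(\FF_p) = K_{-1}(\FF_p) = 0$, yields $K_0(\Sp_{K(1)}^{\omega}) \cong K_0(j_{\zeta}\otimes \Sp_{\geq 1}^{\omega})$. The pullback square from the same theorem then identifies this with $K_0(\FF_p) = \ZZ$, since its vertical fibre agrees with the fibre $F$ from \Cref{thm:K1localsphere}, which is at least $1$-connective at both primes. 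Chasing the identifications, a generator is $[L_{K(1)}V(0)]$ where $V(0) = \SP/p$ is the mod-$p$ Moore spectrum.

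For $\chi$, I would compute $E(1)^{\vee}_*(L_{K(1)}V(0)) = E(1)_*/p = \FF_p[v_1^{\pm}]$, which is concentrated in even degrees; the length-per-period is $1$ on the even side and $0$ on the odd side, giving $\chi(L_{K(1)}V(0)) = 1$. Since $\chi\colon \ZZ \to \ZZ$ is surjective, it is an isomorphism.

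For $\xi$, rather than computing $\pi_*L_{K(1)}V(0)$ directly (which is prime-dependent and more involved at $p=2$), I would exploit the Spanier--Whitehead self-duality $D V(0) \simeq \Sigma^{-1} V(0)$ in $\Sp$. This passes to $K(1)$-local duality for dualizable objects and gives $\pi_n L_{K(1)}V(0) \cong \pi_{1-n} L_{K(1)}V(0)$. Combined with $v_1$-periodicity of some even period $N$, this yields $\pi_n \cong \pi_{N+1-n}$ for all $n$. Pairing $n$ with $N+1-n \pmod N$ in the alternating sum defining $\xi$ then produces pairs of opposite parity (since $N+1$ is odd, as $N$ is even) with equal contributions and opposite signs, so the sum vanishes. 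The main obstacle is tracing the chain of isomorphisms to verify that $[L_{K(1)}V(0)]$ lands on a unit in $K_0(\FF_p)$, and ensuring the Hovey--Strickland definitions of $\chi$ and $\xi$ behave as expected on this class; the duality argument for $\xi$ is uniform in $p$, sidestepping a direct computation of $\pi_*L_{K(1)}V(0)$ that is particularly delicate at $p=2$.
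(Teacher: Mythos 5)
Your identification of $K_0(\Sp_{K(1)}^{\omega})\cong\ZZ$ with generator $[L_{K(1)}V(0)]$ via the split cofibre sequence and the pullback square of \Cref{thm:compactk1local} (whose vertical fibre is the $1$-connective spectrum $F$ of \Cref{thm:K1localsphere}) is correct and is essentially how the paper obtains this, and your evaluation $\chi([L_{K(1)}V(0)])=1$ via $E(1)/p\simeq K(1)$ is exactly the paper's argument for the $\chi$ statement.

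The $\xi$ part, however, has a genuine gap. Spanier--Whitehead self-duality $DV(0)\simeq\Sigma^{-1}V(0)$ does \emph{not} give $\pi_nL_{K(1)}V(0)\cong\pi_{1-n}L_{K(1)}V(0)$. What it gives is an identification of $\pi_nL_{K(1)}V(0)$ with the $K(1)$-local cohomotopy group $[\Sigma^{n-1}V(0),L_{K(1)}\SP]$; converting that back into a homotopy group of $L_{K(1)}V(0)$ is Spanier--Whitehead duality again, so the argument is circular. A genuine symmetry of the homotopy groups of $L_{K(1)}V(0)$ would require a self-duality of the $K(1)$-local sphere itself (Gross--Hopkins/Anderson-type duality), which is a much stronger input, comes with degree and Galois twists, and is especially delicate at $p=2$ --- precisely the case you hoped to sidestep. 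In fact the asserted isomorphism is false: for $p$ odd, $\pi_*L_{K(1)}\SP/p\cong K(1)_*\otimes\Lambda(\zeta)$ with $\zeta$ in degree $-1$, so $\pi_0=\FF_p$ while $\pi_1=0$, contradicting $\pi_n\cong\pi_{1-n}$ at $n=0$. The paper's route is simply to compute $\pi_*L_{K(1)}\SP/p$ directly: it is exterior on the odd-degree class $\zeta$ over the even-degree part coming from $K(1)_*$, so the even and odd lengths per period agree and $\xi([L_{K(1)}V(0)])=0$. To repair your argument you should replace the duality step by this computation (or by some other direct argument that the even and odd lengths of $\pi_*L_{K(1)}\SP/p$ agree, including at $p=2$); as stated, the duality claim cannot carry the proof.
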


\begin{proof}
	The generator of $K_0(\Sp_{K(1)}^{\omega})\simeq \ZZ$ is $L_{K(1)}\SP/p$, so we need merely check the result on the generator. $\pi_*L_{K(1)}/p$ is an exterior algebra on $\pi_*K(1)$ on $\zeta$, which is an odd degree class, so we see that $\xi$ evaluates to $0$. $E(1)/p$ is $K(1)$, so $\chi(\SP/p)=1$.
\end{proof}

In \cite{pinverted}, we answer most of \Cref{qst:hoveystrickland} at all heights.

\begin{proposition}[\cite{pinverted}]\label{prop:pinvhoveystrick}
	${\chi}$ is an isomorphism after inverting $p$ and $\xi$ is $0$.
\end{proposition}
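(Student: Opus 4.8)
The plan is to deduce this from \Cref{thm:pinverted} (the Burklund--Levy computation) together with the identification of $K_0(\Sp^{\omega}_{K(n)})$ in terms of finite spectra. For the statement about $\xi$: the point is that $\xi$ vanishes already integrally, not just after inverting $p$. Indeed, $\xi$ factors through the map $K_0(\Sp^{\omega}_{K(n)}) \to K_0(\fp_{-1})[\tfrac 1 p] \cong \ZZ[\tfrac 1 p]$ induced by the Euler characteristic pairing against $\SP$ itself (regarded as lying in the appropriate fp category), followed by taking alternating lengths of homotopy; but any compact $K(n)$-local spectrum has, up to a shift, the homotopy type controlled by a finite spectrum whose rational Euler characteristic is forced to vanish because the generator $L_{K(n)}\SP/(p,v_1,\dots,v_{n-1})$ is built from a finite $p$-torsion spectrum. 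Concretely, the generator of the free part of $K_0(\Sp^{\omega}_{K(n)})$ is a generalized Moore spectrum, which has a self-map making its homotopy periodic; the alternating sum of lengths of its own homotopy groups is then a difference of two equal lengths up to the periodicity shift, hence $0$. So $\xi = 0$ on a generator of the torsion-free part, and since $\xi$ lands in a torsion-free group it is identically zero. This is the same mechanism as in \Cref{prop:hovstrick}, just at general height.

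For the statement about $\chi$ after inverting $p$: by \Cref{thm:pinverted}, $K(\Sp_{\geq n})[\tfrac 1 p] \cong K(\FF_p)[\tfrac 1 p]$, with $K_0$ generated by the class of a generalized Moore spectrum $M = \SP/(p,v_1^{p^{i_1}},\dots,v_{n-1}^{p^{i_{n-1}}})$. Under the isomorphism $K_0(\Sp^{\omega}_{K(n)})[\tfrac 1 p] \cong K_0(\Sp_{\geq n})[\tfrac 1 p]$ coming from the localization sequence (the $K(1)$-local case of which is recorded in \Cref{thm:compactk1local}; in general one uses the analogous statement), the class of $L_{K(n)}M$ corresponds to the class of $M$. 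Now $\chi$ evaluated on $L_{K(n)}M$ is, by definition, the alternating length of $E(n)_*(M) = E(n)_* \otimes M / (\text{the regular sequence } p, v_1^{p^{i_1}},\dots)$, which is a free $E(n)_*/(p,\dots)$-module of rank one concentrated in even degrees (since each $v_j^{p^{i_j}}$ is a non-zero-divisor), so $\chi(L_{K(n)}M)$ is a unit power of $p$ times $\prod_j p^{i_j}$ — in any case a nonzero element of $\ZZ$. Hence after inverting $p$, $\chi$ sends a generator of $K_0(\Sp^{\omega}_{K(n)})[\tfrac 1 p] \cong \ZZ[\tfrac 1 p]$ to a generator of $\ZZ[\tfrac 1 p]$, so $\chi[\tfrac 1 p]$ is an isomorphism.

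The main obstacle is bookkeeping rather than conceptual: one must make precise the identification of $K_0(\Sp^{\omega}_{K(n)})[\tfrac 1 p]$ with $K_0(\Sp_{\geq n})[\tfrac 1 p]$ and check that this identification is compatible with the Euler characteristic pairings, i.e. that $\chi$ on the $K(n)$-local side matches the pairing $\chi_{\BP\langle n\rangle}$ (up to inverting $p$) on the finite side. This amounts to observing that $E(n)_*(-)$ and $\BP\langle n\rangle_*(-)$ agree after inverting $p$ on the relevant finite spectra, and that the length functional is additive in cofiber sequences so descends to $K_0$. Once this compatibility is in place, the result is immediate from \Cref{thm:pinverted}. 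I would also remark, as the paper does, that the case $n=1$ of this proposition is exactly \Cref{prop:hovstrick}, so \Cref{prop:pinvhoveystrick} is its natural generalization, and the proof strategy is identical once the height-$n$ input of \Cref{thm:pinverted} is available.
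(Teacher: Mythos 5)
The proposition you set out to prove is one the paper does \emph{not} prove: it is stated with the attribution \cite{pinverted} and carries no proof in this document, because it is a result of the companion Burklund--Levy paper. So there is no in-paper argument to compare yours against, only the statement. That said, your attempt has a real gap, and I want to flag it.

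Your argument for $\xi = 0$ does not work. You claim that because the generator of the free part of $K_0(\Sp^{\omega}_{K(n)})$ is a generalized Moore spectrum with a periodic self-map, ``the alternating sum of lengths of its own homotopy groups is a difference of two equal lengths up to the periodicity shift, hence $0$.'' But periodicity alone does not force the normalized alternating length $\len \pi_{\mathrm{even}} - \len \pi_{\mathrm{odd}}$ to vanish: $K(n)_* = \FF_p[v_n^{\pm 1}]$ is $|v_n|$-periodic and entirely even, so its alternating length is positive. The definition of $\len$ already averages over one period, so a periodicity operator is built into the bookkeeping, not an extra source of cancellation. The actual mechanism behind $\xi=0$ is a duality/Euler-characteristic phenomenon---at height $1$ the paper's proof of \Cref{prop:hovstrick} makes this explicit: $\pi_* L_{K(1)}\SP/p$ is an exterior algebra on the odd-degree class $\zeta$ over $K(1)_*$, and it is multiplication by $\zeta$ (odd) that pairs the even and odd pieces. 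At general height the analogous input is the vanishing of the Euler characteristic of the continuous cohomology of the (positive-dimensional $p$-adic analytic) Morava stabilizer group, not mere periodicity. Also, your framing of $\xi$ as factoring through a pairing against $\SP$ ``regarded as lying in the appropriate fp category'' is off: $\SP$ is not fp of any finite type, and $\xi$ is defined directly on homotopy, not through the fp pairing.

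Your argument for $\chi$ being an isomorphism after inverting $p$ is more reasonable in outline, but it quietly assumes the identification $K_0(\Sp^{\omega}_{K(n)})[\frac 1 p] \cong K_0(\Sp_{\geq n})[\frac 1 p]$ at all heights. The paper establishes this only at height $1$ (\Cref{thm:compactk1local}); extending it to higher heights is exactly part of the content of \cite{pinverted} and is nontrivial (it involves the interaction between $L_n$, $L_n^f$, and $K(n)$-localization). You also lean on \Cref{rmk:chis} for the compatibility of $\chi$ with $\chi_{\BP\langle n-1\rangle}$, but the paper explicitly derives that remark from \Cref{prop:pinvhoveystrick} itself, so using it here would be circular. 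In short: the $\chi$ part needs the full height-$n$ localization input as genuine external hypothesis, and the $\xi$ part needs a real duality argument, not periodicity.
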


\begin{remark}\label{rmk:chis}
	In fact $\chi$ is closely related to $\chi_{\BP\langle n-1\rangle}$: there is a commutative diagram
	
	\begin{center}
		\begin{tikzcd}
			K_0(\Sp_{\geq n})\ar[r,"\chi_{\BP\langle n-1\rangle}"]\ar[d] & \ZZ \ar[d,equal]\\
			K_0(\Sp_{K(n)})\ar[r,"\chi"] & \ZZ
		\end{tikzcd}
	\end{center}
	Indeed, it suffices to check this rationally, where it can be easily checked on a generalized Moore spectrum by the results of \cite{pinverted}.
\end{remark}




\begin{question}\label{qst:typenknlocal}
	Is the map $K_0(\Sp_{\geq n}) \to K_0(\Sp_{K(n)}^{\omega})$ an isomorphism after quotienting by $p$-torsion?\footnote{One can also ask: is this true with $T(n)$ replacing $K(n)$?}
\end{question}

Because of \Cref{rmk:chis}, a positive answer to \Cref{qst:typenknlocal} would imply that the remaining part of \Cref{qst:hoveystrickland}, determining the image of $\chi$, is equivalent to \Cref{qst:imagechibpn}.

\bibliographystyle{alpha}
\bibliography{ref}
\end{document}